\newcommand{\compactlist}[1]{\setlength{\itemsep}{0pt} \setlength{\parskip}{0pt} \setlength{\leftskip}{-0.#1em}}
\numberwithin{equation}{section}
\theoremstyle{plain}
\newtheorem{theorem}{Theorem}[subsection]
\newtheorem{proposition}[theorem]{Proposition}
\newtheorem{prop}[theorem]{Proposition}
\newtheorem{lemma}[theorem]{Lemma}
\theoremstyle{definition}
\newtheorem{definition}[theorem]{Definition}
\newtheorem{example}[theorem]{Example}
\newtheorem{remark}[theorem]{Remark}
\newtheorem{note}[theorem]{Note}
\newtheorem{free text}[theorem]{}
\newcommand{\ahha}{{\scriptscriptstyle{A}}}
\newcommand{\behhe}{{\scriptscriptstyle{B}}}
\newcommand{\emme}{{\scriptscriptstyle{M}}}
\newcommand{\enne}{{\scriptscriptstyle{N}}}
\newcommand{\uhhu}{{\scriptscriptstyle{U}}}
\newcommand{\N}{{\mathbb{N}}}
\newcommand{\Z}{{\mathbb{Z}}}
\newcommand{\ga}{\alpha}
\newcommand{\gD}{\Delta}
\newcommand{\eps}{\epsilon}
\newcommand{\gve}{\varepsilon}
\newcommand{\gl}{\lambda}
\newcommand{\cD}{{\mathcal D}}
\newcommand{\cP}{{\mathcal P}}
\newcommand{\Hom}{\operatorname{Hom}}
\newcommand{\Der}{\operatorname{Der}}
\newcommand{\Ext}{{\rm Ext}}
\newcommand{\id}{{\rm id}}
\newcommand{\Ker}{{\rm Ker}\,}
\newcommand{\due}[3]{{}_{{#2 }} {#1}_{{ #3}}\,}    
\newcommand{\qttr}[5]{{}^{{#2 \!}}_{{#4 \!}} {#1}^{#3}_{{\! #5}}}    
\newcommand{\qttrd}[5]{{}^{{#2 \!\!}}_{{#4}} {#1}^{\!\! #3}_{{#5}}}    
\newcommand{\pl}{\partial}
\newcommand{\rmref}[1]{{\text (}\ref{#1}{\text )}}
\newcommand{{\Hl}}{{H^{\ell}}}
\newcommand{{\mHop}}{{m_{H^{\rm op}}}}
\newcommand{{\Hop}}{{H^{\rm op}}}
\newcommand{{\mUop}}{{m_{U^{\rm op}}}}
\newcommand{{\mUopp}}{{m_{\scriptscriptstyle{U^{\rm op}}}}}
\newcommand{{\Uop}}{{U^{\rm op}}}
\newcommand{{\mVop}}{{m_{V^{\rm op}}}}
\newcommand{{\Vop}}{{V^{\rm op}}}
\newcommand{{\Ae}}{{A^{\rm e}}}
\newcommand{{\Ue}}{{U^{\rm e}}}
\newcommand{{\He}}{{H^{\rm e}}}
\newcommand{{\Aop}}{{A^{\rm op}}}
\newcommand{{\Aope}}{({A^{\rm op}})^{\rm e}}
\newcommand{{\Aopl}}{{A^{\rm op}_\pl}}
\newcommand{{\Bop}}{{B^{\rm op}}}
\newcommand{{\Bope}}{({B^{\rm op}})^{\rm e}}
\newcommand{{\Bpl}}{{B_\pl}}
\newcommand{{\op}}{{{\rm op}}}
\newcommand{{\coop}}{{{\rm coop}}}
\newcommand{{\sop}}{{*^{\rm op}}}
\newcommand{\amod}{A\mbox{-}\mathbf{Mod}}                     %
\newcommand{\amoda}{A^{\rm e}\mbox{-}\mathbf{Mod}}                  %
\newcommand{\umod}{U\mbox{-}\mathbf{Mod}}                     
\newcommand{\modu}{\mathbf{Mod}\mbox{-}U}         %
\newcommand{\comodu}{\mathbf{Comod}\mbox{-}U}
\newcommand{\ucomod}{U\mbox{-}\mathbf{Comod}}
\newcommand{\ucmod}{U_h\mbox{-}\mathbf{cMod}}
\newcommand{\cmodu}{\mathbf{cMod}\mbox{-}U_h}
 \newcommand{\lact}{\smalltriangleright}
 \newcommand{\ract}{\smalltriangleleft}
 \newcommand{\blact}{\blacktriangleright}
 \newcommand{\bract}{\blacktriangleleft}
\newcommand{{\gog}}{{G \rightrightarrows G_0}}
\newcommand{{\rra}}{\rightrightarrows}
\newcommand{{\lra}}{\ \longrightarrow \ }
\newcommand{{\lla}}{\ \longleftarrow \ }
\newcommand{{\lma}}{\ \longmapsto \ }
\newcommand{{\bull}}{{\scriptscriptstyle{\bullet}}}
\newcommand{{\qqquad}}{{\quad\quad\quad}}
\newcommand{\Aopp}{{\scriptscriptstyle{\Aop}}}
\newcommand{\Bopp}{{\scriptscriptstyle{\Bop}}}
\newcommand{\scalast}{{\raisebox{-.7mm}{\scalebox{0.8}{*}}}}
\newcommand{\scalastd}{{\raisebox{-.4mm}{\scalebox{0.8}{*}}}}
\begin{document}

\title{Duality features of left Hopf algebroids}

\author{Sophie Chemla}
\author{Fabio Gavarini}
\author{Niels Kowalzig}

\begin{abstract}
We explore special features of the pair $(U^{\scalast}, U_{\scalastd})$
formed by the right and left dual over a (left) bialgebroid $U$
in case the bialgebroid  is, in particular, a left Hopf algebroid. It turns out that there exists a bialgebroid morphism  $ S^{\scalast} $
from 
one dual
to 
another
that extends the construction of the antipode on the dual of a Hopf algebra, and which is an isomorphism if $U$ is both a left and right Hopf algebroid.
This structure is derived from Ph\`ung's categorical equivalence between left and right comodules over $U$ without the need of a (Hopf algebroid) antipode, a result which we review and extend.
In the applications, we illustrate the difference between this construction and those involving antipodes and also deal with dualising modules and their quantisations.
\end{abstract}

\address{S.C.: Institut de Math\'ematiques de Jussieu,  UMR 7586, Universit\'e Pierre et Marie Curie, 75005 Paris, France}
\email{sophie.chemla@imj-prg.fr}

\address{F.G.: Dipartimento di Matematica, Universit\`a di Roma Tor Vergata,
Via della Ricerca Scientifica~1, 00133 Roma, Italia}
\email{gavarini@mat.uniroma2.it}


\address{N.K.: Istituto Nazionale di Alta Matematica, P.le Aldo Moro 5, 00185 Roma, Italia}
\email{kowalzig@mat.uniroma1.it}

\keywords{Hopf algebroids, quantum groupoids, duals, dualising modules, (co)module categories, Lie-Rinehart algebras}

\subjclass[2010]{Primary 16T05, 16T15; secondary 16T20, 16D90}

\maketitle

\tableofcontents

\section{Introduction}

A characteristic feature in standard Hopf algebra theory is its self-duality, that is, the dual of a (finite-dimensional) Hopf algebra (over a field) is a Hopf algebra again. In particular, the antipode of this dual  is nothing but the transpose of the original antipode; see, for example, \cite{Swe:HA}.
In the broader setup of ({\em left} or {\em full}) Hopf algebroids over possibly noncommutative rings, this peculiar property appears to be more intricate; see \cite{Boe:HA} or \S\ref{tranquilli} for the precise definitions of these objects, we only mention here that, in contrast to full Hopf algebroids, there is no notion of antipode for left Hopf algebroids:
one rather considers the inverse of a certain Hopf-Galois map and its associated {\em translation map}.
Nevertheless, left Hopf algebroids appear as the correct generalisation of Hopf algebras over noncommutative rings, whereas full Hopf algebroids generalise Hopf algebras twisted by a character, see, for example, \cite[\S4.1.2]{Kow:HAATCT}.

Recently (after the first posting of this article), Schauenburg  \cite{Schau:TDATDOAHAAHA} showed that the (skew) dual of a left Hopf algebroid (under a suitable finiteness assumption) carries some Hopf structure as well without giving an explicit expression for the inverse of the respective Hopf-Galois map or the associated translation map.

However,
instead of one dual, a left bialgebroid  $ U $ rather possesses {\em two},  the  {\em right dual\/}  $ U^{\scalast} $  and the {\em left dual\/}  $ U_{\scalastd}  $, which, on top, live in a different category compared to $U$ as they are both (under certain finiteness assumptions) right bialgebroids \cite{KadSzl:BAODTEAD}.
There is no reason why one should prefer one of the duals to the other. Hence,  any question concerning ``the dual of  $ U $''  should be converted into a question about the pair  $ ( U^{\scalast} , U_{\scalastd} ) $.

Dealing with {\em full} Hopf algebroids (see \S\ref{reminders_H-ads}) does notably worsen the situation as there are actually {\em four} duals to be taken into account, two of which are left and two of which are right bialgebroids. In this case, an answer to the question of the nature of the Hopf structure on the dual(s) has only been given in certain cases, more precisely, in the presence of integrals \cite[\S5]{BoeSzl:HAWBAAIAD}.

\subsection{Aims and objectives}
\label{aims,appunto}
As mentioned a moment ago, the object one should investigate to discover the limits of self-duality in (left) Hopf algebroid theory is a {\em pair} of duals. In short, our question reads as follows: if a left bialgebroid  $ U $ is, in particular, a left (or right) Hopf algebroid,  what extra structure can be found on the pair
$ ( U^{\scalast} , U_{\scalastd} ) $  of duals?

\subsection{Main results}

After highlighting in \S\ref{acquaprimavera} a multitude of module structures that exist on $\Hom$-spaces and tensor products in presence of a left or right Hopf algebroid structure and that will be used in the sequel, in \S\ref{fallouts} we review (and extend) Ph\`ung's equivalence of comodule categories (see the main text for all definitions and conventions used hereafter):

{\renewcommand{\thetheorem}{{A}}
\begin{theorem}
Let $(U,A)$ be a left bialgebroid.
\begin{enumerate}
\compactlist{99}
\item
Let $(U,A)$ be additionally a left Hopf algebroid such that $U_{\ract}$ is projective.
Then there exists a (strict) monoidal functor  $  \comodu \to \ucomod  $: if $M$ is a right $U$-comodule with coaction $m \mapsto m_{(0)} \otimes_\ahha m_{(1)}$, then
$$M \to U_\ract \otimes_\ahha M, \quad m \mapsto  m_{(1)-} \otimes_\ahha  m_{(0)} \epsilon(m_{(1)+}),$$
defines a left comodule structure on $M$ over $U$.
\item
Let  $(U,A)$  be a right Hopf algebroid such that $_{\lact}U$ is projective.
 Then there exists a (strict) monoidal functor  $  \ucomod \to \comodu  $: if $N$ is a left $U$-comodule with coaction $n \mapsto n_{(-1)} \otimes_\ahha n_{(0)}$, then
$$N \to N \otimes_\ahha \due U \lact {}, \quad n \mapsto \epsilon(n_{(-1)[+]}) n_{(0)} \otimes_\ahha  n_{(-1)[-]},$$
defines a right comodule structure on $N$ over $U$.
\item
If $U$ is both a left and right Hopf algebroid and if both $U_{\ract}$ and $_{\lact}U$ are $A$-projective,
then the functors mentioned in {\it (i)} and {\it (ii)} are quasi-inverse to each other and we have an equivalence
$$
\ucomod \simeq \comodu$$
of monoidal categories.
\end{enumerate}
\end{theorem}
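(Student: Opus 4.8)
The plan is to prove something slightly stronger than the bare statement, namely that the two functors from parts \textit{(i)} and \textit{(ii)} are mutually inverse on the nose, which \emph{a fortiori} makes them quasi-inverse. Write $F\colon \comodu \to \ucomod$ for the functor of \textit{(i)} and $G\colon \ucomod \to \comodu$ for that of \textit{(ii)}; both are available here since $U$ is assumed to be simultaneously a left and a right Hopf algebroid with $U_{\ract}$ and ${}_{\lact}U$ projective. The decisive structural observation is that $F$ and $G$ leave the underlying $A$-module unchanged (and act as the identity on comodule morphisms): they only reassign the coaction. Consequently $G\circ F$ and $F\circ G$ are again the identity on underlying modules and on morphisms, so to establish \textit{(iii)} it suffices to check that the two \emph{recomputed} coactions agree with the originals. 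Once this is verified one has $G\circ F=\id_{\comodu}$ and $F\circ G=\id_{\ucomod}$ as functors, and, since $F$ and $G$ are strict monoidal by \textit{(i)} and \textit{(ii)}, they assemble into a (strict) monoidal isomorphism whose structural natural transformations are the identities, yielding the asserted monoidal equivalence.

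Next I would write out the two composite coactions explicitly and isolate the relation to be proved. Applying $G\circ F$ to a right comodule $M$ with coaction $m\mapsto m_{(0)}\otimes_\ahha m_{(1)}$ gives
\begin{equation*}
m \;\longmapsto\; \epsilon\big((m_{(1)-})_{[+]}\big)\, m_{(0)}\,\epsilon(m_{(1)+}) \otimes_\ahha (m_{(1)-})_{[-]},
\end{equation*}
which is to be shown equal to $m_{(0)}\otimes_\ahha m_{(1)}$; symmetrically, $F\circ G$ applied to a left comodule $N$ with coaction $n\mapsto n_{(-1)}\otimes_\ahha n_{(0)}$ yields
\begin{equation*}
n \;\longmapsto\; (n_{(-1)[-]})_-\otimes_\ahha \epsilon(n_{(-1)[+]})\, n_{(0)}\,\epsilon\big((n_{(-1)[-]})_+\big),
\end{equation*}
to be compared with $n_{(-1)}\otimes_\ahha n_{(0)}$. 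Since each expression uses the coaction only once, followed by the two translation maps and the counit applied to the $U$-leg, counitality and coassociativity of comodules reduce both claims to identities purely in $U$; concretely, it suffices to verify them on the universal comodule $U$ with coaction given by the comultiplication, i.e.\ after the substitution $(m_{(0)},m_{(1)})\rightsquigarrow(u_{(1)},u_{(2)})$ and its left-handed analogue.

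The heart of the argument is to confirm these two $U$-level relations from the standard properties of the translation maps $u\mapsto u_+\otimes u_-$ and $u\mapsto u_{[+]}\otimes u_{[-]}$ recalled in \S\ref{acquaprimavera}: the counit contractions governing $\epsilon(u_+)$ and $\epsilon(u_{[+]})$, the Hopf--Galois/Takeuchi identities such as $u_{(1)+}\otimes u_{(1)-}u_{(2)}=u\otimes 1$ together with its right-handed counterpart, and, crucially, the compatibility tying the family $(\,\cdot_+,\cdot_-)$ to the family $(\,\cdot_{[+]},\cdot_{[-]})$ that is available precisely because $U$ carries both a left and a right Hopf structure. Schematically, applying $(\,\cdot)_{[\pm]}$ to $u_-$ (respectively $(\,\cdot)_{\pm}$ to $u_{[-]}$) and then contracting the ``$+$''-legs against the counit telescopes the composite back to $u$, exactly as in the classical Hopf-algebra case, where these reduce to the familiar identities for the antipode and its inverse. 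I would carry this telescoping out once for each of the two composites.

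The main obstacle is bookkeeping rather than conceptual. One must track the several distinct $A$-actions (via source and target, on the left and on the right) and keep every tensor product balanced over the correct ring, since the ``$+$''- and ``$[+]$''-legs sit \emph{a priori} in different bimodule positions and the counit may only be applied on the admissible side; a single misplaced action invalidates the contraction. This is exactly where the projectivity hypotheses on $U_{\ract}$ and ${}_{\lact}U$ enter: they are what guarantee, through \textit{(i)} and \textit{(ii)}, that $F$ and $G$ are well defined so that the formal expressions above are legitimate. Granting that, the residual task is the careful but routine verification of the two translation-map relations, after which the equality of functors — and hence the monoidal equivalence — follows.
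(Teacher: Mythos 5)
You have a genuine gap: the statement comprises three parts, and your argument addresses only part \emph{(iii)}, taking \emph{(i)} and \emph{(ii)} as given facts (you write that the projectivity hypotheses ``guarantee, through \emph{(i)} and \emph{(ii)}, that $F$ and $G$ are well defined'' --- but \emph{(i)} and \emph{(ii)} are themselves part of what is to be proved, so this is circular). Parts \emph{(i)} and \emph{(ii)} are where most of the work lies. The formula $m \mapsto m_{(1)-} \otimes_\ahha m_{(0)}\epsilon(m_{(1)+})$ is not \emph{a priori} well defined: neither the translation map applied to the $U$-leg of the coaction nor $\epsilon$ applied to a ``$+$''-leg makes sense on the naked tensor product $M \otimes_\ahha U$; everything must be factored through Takeuchi-type products. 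The paper's proof corestricts the coaction to $M \times_\ahha U$ and the translation map to $U \times_\Aopp U$, and to compose the two it must pass through the triple product $M \times_\ahha U \times_\Aopp U$ and invert the canonical map $(M \times_\ahha U) \times_\Aopp U \to M \times_\ahha U \times_\Aopp U$ --- this inversion, by a theorem of Takeuchi, is exactly where projectivity of $U_\ract$ is used. After that one still has to check $\Ae$-linearity, coassociativity (via \rmref{Sch4} and \rmref{Sch5}), counitality (via \rmref{Takeuchicoaction} and \rmref{castelnuovo}), and strict monoidality of the functor, including its compatibility with the codiagonal coactions and the unit object (via \rmref{Sch6} and \rmref{Sch9}). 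None of this appears in your proposal, so as written it proves at most a third of the theorem.

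For part \emph{(iii)} itself your route agrees with the paper's: the two composite coactions you display are correct, and the ``compatibility tying the two translation maps'' that you invoke is precisely the pair of mixed relations $u_+ \otimes_\Aopp u_{-[+]} \otimes^\ahha u_{-[-]} = u_{(1)+} \otimes_\Aopp u_{(1)-} \otimes^\ahha u_{(2)}$ \rmref{mampf2} and its mirror \rmref{mampf3}, which together with \rmref{Sch8}, \rmref{Tch8}, \rmref{Takeuchicoaction}, and \rmref{castelnuovo} collapse both composites in a couple of lines; your observation that the composites are then the identity on the nose, not merely naturally isomorphic to it, is also correct and is what the paper's computation implicitly shows. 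Two caveats remain, however. First, you never actually carry out the telescoping (``I would carry this telescoping out \dots''), nor do you state the mixed relations precisely, and these are not formal consequences of the separate left and right Hopf structures --- they are proved in the paper by applying $\ga_\ell \otimes \id$ resp.\ $\id \otimes \ga_r$ and using their bijectivity. Second, your reduction to the universal comodule $U$ is not automatic: the composite coaction acts on the $M$-leg through both the given right $A$-action and the induced left $A$-action \rmref{Inducedaction}, so transferring an identity verified on $(U,\gD_\ell)$ to a general $M$ requires an argument (naturality of the recomputed coaction plus split injectivity of the original coaction, which follows from counitality); the direct computation on a general $M$, as in the paper, is no longer and sidesteps this entirely.
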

}

Note that this equivalence works without the help of an antipode as there are objects that are both left and right Hopf algebroids but not full Hopf algebroids (cocommutative left Hopf algebroids, for example).

Starting from this result, under suitable finiteness hypotheses on $U$, one can construct functors $\mathbf{Mod}\mbox{-}{U_{\scalastd}} \to \mathbf{Mod}\mbox{-}{U^{\scalastd}}$ resp.\ $\mathbf{Mod}\mbox{-}{U^{\scalastd}} \to \mathbf{Mod}\mbox{-}{U_{\scalastd}}$, and from this we isolate maps $U^{\scalastd} \to U_{\scalastd}$ resp.\ $U_{\scalastd} \to U^{\scalastd}$, which even make sense without any finiteness assumptions as proven in \S\ref{linking-structures},
and which are our main object of interest.

In \S\ref{linking} we can then give the following answer to the problem mentioned in \S\ref{aims,appunto}, that is, elucidate the relation between the left and the right dual:

{\renewcommand{\thetheorem}{{B}}

\begin{theorem}
\label{acquadinepi}
Let $(U,A)$ be a left bialgebroid.
\begin{enumerate}
\compactlist{99}
\item
If  $(U,A) $  is moreover a left Hopf algebroid, there is a morphism $  S^{\scalast} : U^{\scalast} \to U_{\scalastd}  $
of  $ A^e $-rings  with augmentation; if, in addition, both  $ \due U \lact {} $  and  $ U_\ract $  are finitely generated $A$-projective,  then  $ (S^{\scalast}, \id_\ahha) $  is a morphism of right bialgebroids.
\item
If  $(U,A) $  is a right Hopf algebroid instead, there is a morphism
$  S_{\scalastd} : U_{\scalastd} \to U^{\scalast}  $
of  $ A^e $-rings  with augmentation; if, in addition, both  $ \due U \lact {} $  and  $ U_\ract $  are finitely generated $A$-projective,  then  $ (S_{\scalast}, \id_\ahha) $  is a morphism of right bialgebroids.
\item
If $(U,A)$ is simultaneously both a left and a right Hopf algebroid, then the two morphisms  are inverse to each other; hence, if both  $ \due U \lact {} $  and  $ U_\ract $  are finitely generated $A$-projective, then
$U^\scalast\simeq U_{\scalastd}$ as right bialgebroids.
\end{enumerate}
\end{theorem}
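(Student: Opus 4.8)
The plan is to deduce all three parts from Theorem~A, using the explicit maps $S^{\scalast}$ and $S_{\scalastd}$ produced in \S\ref{linking-structures}. By construction these are the transposes of the left resp.\ right translation maps $u \mapsto u_+ \otimes u_-$ resp.\ $u \mapsto u_{[+]} \otimes u_{[-]}$, so they are genuine $A$-linear maps $U^{\scalast} \to U_{\scalastd}$ resp.\ $U_{\scalastd} \to U^{\scalast}$ available with no finiteness hypothesis at all. Accordingly I would establish the $A^e$-ring-with-augmentation statement by hand, invoke finiteness only for the coalgebra half, and read off (iii) from the quasi-inverse clause of Theorem~A(iii). The guiding principle throughout is the dictionary of \cite{KadSzl:BAODTEAD}: under suitable finiteness, right $U$-comodules correspond to $U_{\scalastd}$-modules and left $U$-comodules to $U^{\scalast}$-modules, whence a \emph{monoidal} functor between the comodule categories is precisely the same datum as a \emph{bialgebroid} map between the duals, via restriction of scalars.

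For part (i), I would first check that $S^{\scalast}$ is a homomorphism of $A^e$-rings with augmentation directly from the left Hopf structure. Compatibility with the two base-algebra actions comes from how the source/target maps of $U^{\scalast}$ and $U_{\scalastd}$ are carried through the translation map; unitality and compatibility with the augmentations follow from the counit normalisations $\epsilon(u_+)u_- = u = u_+\epsilon(u_-)$; and multiplicativity is exactly the identity $(uv)_+ \otimes (uv)_- = u_+ v_+ \otimes v_- u_-$ combined with $u_+ u_- = u$. None of this needs projectivity. For the bialgebroid upgrade, assume $\due U \lact {}$ and $U_\ract$ finitely generated $A$-projective, so that $U^{\scalast}$ and $U_{\scalastd}$ are bona fide right bialgebroids (their coproducts being the transposes of the product of $U$) and the comodule--module dictionary above is an equivalence. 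Under this equivalence the strict monoidal functor $\comodu \to \ucomod$ of Theorem~A(i) becomes restriction of scalars along $S^{\scalast}$; strict monoidality of that functor is then equivalent to $S^{\scalast}$ being comultiplicative and counital, i.e.\ to $(S^{\scalast}, \id_\ahha)$ being a morphism of right bialgebroids.

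Part (ii) is entirely parallel, exchanging the two Hopf structures, the two translation maps, and the functor of Theorem~A(ii) in place of A(i). For part (iii), when $U$ is simultaneously a left and a right Hopf algebroid, Theorem~A(iii) says the two functors are quasi-inverse; since both are identity-on-underlying-modules with mutually inverse coaction formulas, their composites are the identity functors on the nose, and hence the corresponding restriction functors along $S^{\scalast}$ and $S_{\scalastd}$ are strictly mutually inverse. This forces $S_{\scalastd} \circ S^{\scalast} = \id_{U^{\scalast}}$ and $S^{\scalast} \circ S_{\scalastd} = \id_{U_{\scalastd}}$. To see that these identities of maps hold with no finiteness assumption (as they must, the maps being defined without one), I would verify them directly: they are exactly the mutual-inverse relations between the left and the right translation maps in a two-sided Hopf algebroid, the algebraic shadow of Theorem~A(iii). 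The isomorphism of right bialgebroids $U^{\scalast} \simeq U_{\scalastd}$ then follows by combining this with the bialgebroid-morphism statements of (i) and (ii) under the finiteness hypothesis.

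I expect the genuine obstacle to be the comultiplicativity of $S^{\scalast}$ in part (i). The coproducts on the two duals are transposes of the single product of $U$, but they are formed over the noncommutative base $A$ and live over the flipped $A^e$-structure of a right bialgebroid, so matching $\Delta_{U_{\scalastd}} \circ S^{\scalast}$ with $(S^{\scalast} \otimes S^{\scalast}) \circ \Delta_{U^{\scalast}}$ by a frontal computation would demand painstaking bookkeeping of which $A$-action is used on each tensor leg and of the intertwining of the translation map with the coproduct of $U$. The point of routing through Theorem~A is precisely to sidestep this: the whole coalgebra-morphism check is absorbed into the already-proven strict monoidality of the Ph\`ung functor, so that only the purely ring-theoretic identities above need to be handled explicitly.
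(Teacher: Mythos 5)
Your overall architecture does coincide with the paper's: the $A^e$-ring-with-augmentation half of (i)/(ii) is proved by hand with no finiteness hypothesis, comultiplicativity is extracted under finiteness from the strict monoidality of the Ph\`ung functors via the module--comodule dictionary (this is literally how the paper proves property {\em (c)} inside Theorem \ref{S*-morphism}, using diagram \rmref{eastpak} and the computation $(\epsilon \otimes_\ahha \epsilon)\Yleft\phi = (\epsilon\Yleft\phi^{(1)})\otimes_\ahha(\epsilon\Yleft\phi^{(2)})$). But your hand computation for multiplicativity, as described, would fail. You claim $S^{\scalast}(\phi\phi') = S^{\scalast}(\phi)S^{\scalast}(\phi')$ is ``exactly'' \rmref{Sch6} combined with $u_+u_-=u$. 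Leaving aside that the quoted identities are misstated (\rmref{Sch7} reads $u_+u_- = s^\ell(\epsilon(u))$, and the counit identity is $\epsilon(u_-)\blact u_+ = u$, see \rmref{Sch8}; there is no identity $\epsilon(u_+)u_-=u$), the real problem is that the products on $U^{\scalast}$ and $U_{\scalastd}$ are not dual to the product of $U$ at all: they are convolution-type products, i.e.\ transposes of the coproduct $\Delta_\ell$ --- as you yourself state in your final paragraph, where you correctly say the {\em co}products of the duals are transposes of the product. Hence \rmref{Sch6}, which governs the interaction of the translation map with the multiplication of $U$, is irrelevant here, and multiplicativity is not ``purely ring-theoretic'': it is a coproduct-side computation. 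The paper's proof of property {\em (b)} first establishes $\langle S^{\scalast}(\phi)\psi, u\rangle = \langle\psi, u_+t^\ell(\phi(u_-))\rangle$ via \rmref{Sch4} and the pairing formulas, and then deduces multiplicativity using \rmref{Sch5} and \rmref{Sch9}; this is the heaviest hand computation in the whole proof --- precisely the step you expected to be routine. Nor can you quietly fall back on the categorical argument for this step (the fact that \rmref{diqua} defines an action does imply multiplicativity, as the paper remarks), since that needs $\due U \lact {}$ finitely generated projective, whereas the theorem asserts the ring-morphism statement with no finiteness at all.

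A secondary issue concerns (iii). Your categorical argument (the two restriction functors are strictly mutually inverse; evaluate at the unit of the regular module) works, but only under the finiteness hypotheses, whereas the first clause of (iii) is asserted without them. The paper (Theorem \ref{criptabalbi}) proves $S_{\scalastd}\circ S^{\scalast} = \id$ by a short direct computation whose engine is the mixed relation \rmref{mampf3} together with \rmref{Tch7}. Your fallback --- ``verify directly; these are exactly the mutual-inverse relations between the translation maps'' --- is only a gesture: collapsing $\epsilon\big(u_{[+]}s^\ell(\epsilon(u_{[-]+}t^\ell\phi(u_{[-]-})))\big)$ to $\phi(u)$ genuinely requires \rmref{mampf3}, an identity intertwining the two translation maps through the coproduct, which itself needs proof (it is the Lemma in \S\ref{goeseveron}). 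As written, these two steps --- multiplicativity in (i)/(ii) and the finiteness-free part of (iii) --- are the actual mathematical content of the theorem, and your proposal does not yet supply them.
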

}

Now, as said before, for a left Hopf algebroid (which is finitely generated projective with respect to both source and target map) there is no canonical choice for which dual to consider but in view of Theorem \ref{acquadinepi}, in case the left Hopf algebroid is simultaneously a right Hopf algebroid, both duals are isomorphic and hence can be seen as {\em its} dual, which carries a Hopf structure by Schauenburg's recent result \cite{Schau:TDATDOAHAAHA}. This seems to be as close as one can get to self-duality.

Theorem \ref{acquadinepi} is a straight analogue of the construction on the dual for a (finite-dimensional) Hopf algebra $H$ (over a field) with antipode $S$ in the following sense: here, one has  $  H^{\scalast} = (H_{\scalastd})^\op_\coop  $
and  $ S^{\scalast} $  is exactly the transpose of $S$ and therefore the antipode for the dual Hopf algebra.

   Observe that this last case in Theorem \ref{acquadinepi}, {\em i.e.}, the presence of both a left and right Hopf structure is given, for example, when  $ U $  is a full Hopf algebroid with bijective antipode but also in weaker cases such as for the universal enveloping algebra of a Lie-Rinehart algebra. In the situation of a full Hopf algebroid, $U^{\scalast}$ and $U_{\scalastd}$ are additionally linked (in both directions) by the transposition $\qttr S t {}{}{}$ of the antipode $  S : U \to U^\op_\coop  $. However, in Theorem \ref{movimentoallaforza} we show that the map $\qttr S t {}{}{}$ in general does not coincide with $S^{\scalast}$ or $S_{\scalastd}$, in contrast to the Hopf algebra case mentioned above.
Moreover, if  a left Hopf algebroid $ U $  is cocommutative with both  $ \due U \lact {} $  and  $ U_\ract $  finitely generated $A$-projective,
then $  U^{\scalast}  = (U_{\scalastd})_\coop  $  is a full Hopf algebroid (with antipode precisely given by $S^{\scalast}$), though  $ U $  might be not.

   We shall also see in \S\ref{lautesGequatsche} that  Theorem \ref{acquadinepi}  actually extends to a larger setup, in particular,  it applies to some interesting cases  (coming from geometry), where neither  $ \due U \lact {} $  nor  $ U_\ract $  are finitely generated projective but $U^\scalast$ and $U_\scalast$ are still right bialgebroids in a suitable (topological) sense,  such as when  $ U $  is the universal enveloping of a Lie-Rinehart algebra, or a quantisation of it.

In \S\ref{lautesGequatsche},
we illustrate these results by considering some examples related to Lie-Rinehart algebras (or Lie algebroids) and their jet spaces, as well as their quantised versions.
Moreover, in \S\ref{heisz} we consider further duality phenomena related to dualising modules, which appear in Poincar\'e duality, along with their quantisations.

\bigskip

\thanks{ {\bf Acknowledgements.}
 We would like to thank the referee for useful comments and suggestions, and in particular for pointing out the article \cite{Phu:TKDFHA} to us.
We also would like to thank G.\ B{\"o}hm and L.\ El Kaoutit for their valuable comments and remarks.

   S.C.\ acknowledges travel support and hospitality from both INdAM and Universit\`a di Roma Tor Vergata, where part of this work was achieved.
The research of F.G.\ was partially funded by the PRIN 2012
    project {\em Moduli spaces and Lie theory}.
The research of N.K.\ was funded by an INdAM-COFUND Marie Curie grant and also supported by UniNA and Compagnia di San Paolo in the framework of the program STAR 2013.

\section{Preliminaries}
\label{tranquilli}

We list here those preliminaries with respect to bialgebroids and their duals that are needed in this article; see, {\em e.g.}, \cite{Boe:HA} and references therein for an overview on this subject.

Fix an (associative, unital, commutative) ground ring  $ k $.  Unadorned tensor products will always be meant over  $ k $.
All other algebras, modules etc.~will
have an underlying structure of a
$k$-module. Secondly, fix an associative and unital
$k$-algebra $A$, {\em i.e.}, a ring with a
ring homomorphism
$ \eta_\ahha : k \rightarrow Z(A)$ to
its centre. Denote by
$A^\mathrm{op}$ the
opposite and by
$A^\mathrm{e} := A \otimes A^\mathrm{op}$
the enveloping algebra
of $A$, and by
$\amod$ the category of
left $A$-modules.
 Recall that an {\em $ A $-ring\/}  is a monoid in the monoidal category $(\amoda, \otimes_\ahha, A)$ of $(A,A)$-bimodules fulfilling the usual associativity and unitality axioms, whereas dually an {\em $A$-coring\/} is a comonoid in this category that is coassociative and counital.

\subsection{Bialgebroids}
\label{h-Hopf_algbds}

For an  $ A^e $-ring $ U $  given by the $k$-algebra map
  $\eta: \Ae \to U $,  consider the restrictions
$ s := \eta( - \otimes 1_\uhhu) $  and  $t:= \eta(1_\uhhu \otimes -)$, called  {\it source\/}  and  {\it target\/}  map, respectively.
Thus an  $ A^e $-ring  $ U $ carries two  $ A $-module  structures from the left and two from the right, namely
  $$
a \lact u \ract b  :=  s(a)  t(b)  u,  \quad   \quad  a \blact u \bract b  :=  u  t(a)  s(b),   \eqno \forall \; a, b \in A  ,  u \in U.
$$
If we let  $  U_\ract {\otimes_{\scriptscriptstyle A}} {}_\lact U  $  be the corresponding tensor product of  $ U $  (as an $ A^e $-module)  with itself, we define the  {\it (left) Takeuchi-Sweedler product\/}  as
$$
U_\ract \! \times_\ahha \! {}_\lact U  \; := \;
     \big\{ {\textstyle \sum_i} u_i \otimes u'_i \in U_\ract \! \otimes_{\scriptscriptstyle A} \! {}_\lact U \mid {\textstyle \sum_i} (a \blact u_i) \otimes u'_i = {\textstyle \sum_i} u_i \otimes (u'_i \bract a), \ \forall a \in A \big\}.
$$
By construction,  $  U_\ract \! \times_{\scriptscriptstyle A} \! {}_\lact U  $  is an  $ \Ae $-submodule  of  $  U_\ract \! \otimes_{\scriptscriptstyle A} \! {}_\lact U  $;  it is also an  $ A^e $-ring via factorwise multiplication, with unit $  1_\uhhu \otimes 1_\uhhu  $  and  $ \eta_{{}_{U_\ract \times_{\scriptscriptstyle A} {}_\lact U}}(a \otimes \tilde{a}) := s(a) \otimes t(\tilde{a})$.

Symmetrically, one can consider the tensor product
$  U_\bract \otimes_\ahha \due U \blact {} $  and define the  {\em (right) Takeuchi-Sweedler product\/}  as
$U_\bract \times_\ahha \due U \blact {}  $,   which is an  $\Ae $-ring  inside
$  U_\bract \otimes_\ahha \due U \blact {} $.

\begin{definition}
 A {\em left  bialgebroid} $(U,A)$  is a  $ k $-module  $ U $  with the structure of an
$ \Ae $-ring  $(U, s^\ell, t^\ell)$  and an  $ A $-coring  $(U, \gD_\ell, \eps)$  subject to the following compatibility relations:
\begin{enumerate}
\item
the  $ \Ae $-module  structure on the  $ A $-coring  $ U $  is that of
$ \due U \lact \ract  $;
\item
the coproduct $ \Delta_\ell $  is a unital  $ k $-algebra  morphism taking values in  $  U {}_\ract \! \times_{\scriptscriptstyle A} \! {}_\lact U  $;
\item
for all  $  a, b \in A  $,  $  u, u' \in U  $, one has:
\begin{equation}
\label{castelnuovo}
\epsilon( a \lact u \ract b) =  a  \epsilon(u)  b, \quad \epsilon(uu')  =  \epsilon \big( u \bract \epsilon(u')\big) =  \epsilon \big(\epsilon(u') \blact u\big).
\end{equation}
\end{enumerate}
A  {\it morphism\/}  between left bialgebroids $(U, A)$ and $(U',A')$
is a pair $(F, f)$ of maps $F: U \to U'$, $f:A \to A'$ that commute with all structure maps in an obvious way.
\end{definition}

As for any ring, we can define the categories $\umod$ and $\modu$ of left and right modules over $U$. Note that $\umod$ forms a monoidal category but $\modu$ usually does not. However, in both cases there is a forgetful functor $\umod \to \amoda$, resp.\ $\modu \to \amoda$: whereas we denote left and right action of a bialgebroid $U$ on $M \in \umod$ or $N \in \modu$ usually by juxtaposition, for the resulting $\Ae$-module structures the notation
$$
a \lact m \ract b := s^\ell(a)t^\ell(b)m, \qquad a \blact m \bract b := ns^\ell(b)t^\ell(a)  
$$
for $m \in M, \ n \in N, \ a,b \in A$ is used instead. For example, the base algebra $A$ itself is a left $U$-module via the left action
$u(a) := \epsilon( u \bract a) = \epsilon( a \blact u )$ for  $  u \in U  $ and  $  a \in A  $, but in most cases there is no right $U$-action on $A$.

Dually, one can introduce the categories $\ucomod$ and $\comodu$ of left resp.\ right $U$-comodules, both of which are monoidal; here again, one has forgetful functors $\ucomod \to \amoda$ and $\comodu \to \amoda$.
More precisely (see, {\em e.g.}, \cite{Boe:HA}), a (say) right comodule
is a right comodule of the coring underlying $U$, {\em i.e.}, a right $A$-module $M$ and a right $A$-module map
$
     \due {\gD} {\emme\!} {}: M \rightarrow
        M \otimes_\ahha {}_\lact  U, \
       m \mapsto m_{(0)} \otimes_\ahha m_{(1)},
$
satisfying the usual coassociativity and counitality axioms.
On any $M \in \comodu$ there is an induced {\em left} $A$-action given by
\begin{equation}
\label{Inducedaction}
am := m_{(0)}\gve(a \blact m_{(1)}),
\end{equation}
and $\due {\gD} {\emme\!} {}$ is then an $\Ae$-module morphism
$
M \rightarrow
        M \times_\ahha {}_\lact  U,
$
where $ M \times_\ahha {}_\lact  U $ is the $\Ae$-submodule of
$ M \otimes_\ahha {}_\lact  U$ whose elements $\sum_i m_i
\otimes_\ahha u_i$ fulfil
\begin{equation}
\label{Takeuchicoaction}
\textstyle
\sum_i a m_i \otimes_\ahha u_i
		  = \sum_i m_i \otimes_\ahha u_i \bract a, \
		  \forall a \in A.
\end{equation}

\medskip

The notion of a  {\em right bialgebroid\/}  is obtained if one starts with the $\Ae$-module structure given by $ \blact  $ and  $ \bract  $ instead of $\lact$ and $\ract$. We will refrain from giving the details here and refer to \cite{KadSzl:BAODTEAD} instead.

\begin{remark}
 The  {\em opposite\/}  of a left  bialgebroid  $( U, A, s^\ell, t^\ell, \Delta_\ell, \epsilon )  $  yields  a {\em right\/}  bialgebroid
$( U^\op, A, t^\ell, s^\ell, \Delta_\ell, \epsilon )$.
The  {\em coopposite\/}  of a left bialgebroid is the  {\em left\/}  bialgebroid given by  $( U, A^\op, t^\ell, s^\ell, \Delta_\ell^\coop, \epsilon) $.
\end{remark}

\subsection{Pairings of  $ U $-modules  and dual bialgebroids}
\label{mayitbe}
Let  $ (U,A) $  be a left bialgebroid, $M, M' \in \umod$ be left $U$-modules, and $N,N' \in \modu$ be right $U$-modules. Define
\begin{equation*}
\begin{array}{rclrcl}
\Hom_{\Aopp} (M , M')  \!\!\!&:=\!\!\!&  \Hom_{\Aopp} ( M_\ract ,  M'_\ract ), &
\Hom_\ahha( M , M')  \!\!\!&:=\!\!\!&  \Hom_\ahha (\due M \lact {}, \due {M'} \lact {}),  \\
\Hom_{\Aopp} (N , N') \!\!\!&:=\!\!\!&  \Hom_{\Aopp} ( N_{\bract}  , N'_{\bract}), &
\Hom_\ahha(N , N')  \!\!\!&:=\!\!\!&  \Hom_\ahha(\due N \blact {}, \due {N'} \blact {}).
\end{array}
\end{equation*}
In particular,  for  $ M': = A  $  we set  $  M_{\scalastd} := \Hom_\ahha( M , A )  $  and  $ M^{\scalast} := \Hom_{\Aopp}( M , A)  $,  called, respectively, the  {\em left\/}  and {\it right\/}  dual of  $ M  $.

   The notion of  {\em pairing\/}  between $\Ae$-bimodules is also useful (see, for instance, \cite{CheGav:DFFQG}):

\begin{definition}
\label{left and right 1}
Let  $ U $  and  $ W $  be two  $ \Ae $-bimodules.
\begin{enumerate}
\compactlist{99}
\item
A  {\em  left $ \Ae $-pairing\/}  is a  $ k $-bilinear  map  $\langle\ ,\ \rangle : U \times W \to A  $  such that for any  $  u \in U $,  $  w \in W $,  and  $  a \in A  $,  one has
\begin{equation*}
\begin{array}{rclrclrcl}
\langle u  , a \lact w \rangle   \!\!\!&=\!\!\!&
\langle u \ract a  , w \rangle, &
\langle u  , w \ract a \rangle   \!\!\!&=\!\!\!&
\langle a \blact u  , w \rangle, &
\langle u  , a \blact w \rangle  \!\!\!&=\!\!\!&
\langle u \bract a  , w \rangle, \\
\langle u  , w \bract a \rangle  \!\!\!&=\!\!\!&
\langle u  , w \rangle  a, &
\langle a \lact u  , w \rangle   \!\!\!&=\!\!\!&
 a \langle u  , w \rangle.
\end{array}
\end{equation*}
\item
A {\em  right $ \Ae $-pairing\/}  is a  $ k $-bilinear  map
$\langle\ ,\ \rangle : U \times W \to A  $  such that for any  $  u \in U $,  $  w \in W $,  and  $  a \in A  $,  one has
\begin{equation*}
\begin{array}{rclrclrcl}
\langle u  , w \ract a \rangle  \!\!\!&=\!\!\!&
\langle a \lact u  , w \rangle, &
\langle u  , a \lact w \rangle  \!\!\!&=\!\!\!&
\langle u \bract a  , w \rangle, &
\langle u  , w \bract a \rangle  \!\!\!&=\!\!\!&
\langle a \blact u  , w \rangle, \\
\langle u  , a \blact w \rangle   \!\!\!&=\!\!\!&
 a  \langle u  , w \rangle, &
\langle u \ract a  , w \rangle  \!\!\!&=\!\!\!&
\langle u  , w \rangle  a.
\end{array}
\end{equation*}
\end{enumerate}
\end{definition}

\begin{free text}{\bf Duals of bialgebroids.}
\label{regnetswirklich?}
 Let  $ U_{\scalastd} $  resp.\  $ U^{\scalast} $  be the left resp.\ right dual of a left bialgebroid.  If  $ \due U \lact {} $  is finitely generated projective, then  $ U_{\scalastd} $  is canonically endowed with a  {\em right} bialgebroid structure \cite{KadSzl:BAODTEAD} such that the evaluation pairing between  $ U $  and  $ U_{\scalastd} $  is a (nondegenerate)  {\em left\/}  pairing; similarly, if $U_\ract$ is finitely generated projective, then $ U^{\scalast} $  has a canonical  {\em right} bialgebroid structure  for which the natural pairing between  $ U $  and  $ U^\scalast $  is a  {\em right\/}  pairing.  If instead in either case the above finitely generated projective assumption is not satisfied, then both  $U^\scalast$  and  $ U_{\scalastd} $  are nevertheless  $ \Ae $-rings  endowed with a ``counit'' map, or augmentation.
\end{free text}

\subsection{Left and right Hopf algebroids}
\label{goeseveron}
\label{half-Hopf_algebroids}

 For any  left  bialgebroid  $ U  $,  define the  {\em Hopf-Galois maps}
\begin{equation*}
\begin{array}{rclrcl}
\ga_\ell : \due U \blact {} \otimes_{\Aopp} U_\ract &\to& U_\ract  \otimes_\ahha  \due U \lact,
& u \otimes_\Aopp v  &\mapsto&  u_{(1)} \otimes_\ahha u_{(2)}  v, \\
\ga_r : U_{\!\bract}  \otimes^\ahha \! \due U \lact {}  &\to& U_{\!\ract}  \otimes_\ahha  \due U \lact,
&  u \otimes^\ahha v  &\mapsto&  u_{(1)}  v \otimes_\ahha u_{(2)}.
\end{array}
\end{equation*}
With the help of these maps, we make the following definition due to Schauenburg \cite{Schau:DADOQGHA}:

\begin{definition}
\label{def Half Hopf bialgebroids}
A left bialgebroid $U$ is called a  {\em left Hopf algebroid} if $ \alpha_\ell  $ is a bijection.  Likewise, it is called a {\em right Hopf algebroid} if $\ga_r$ is so.
In either case, we adopt for all $u \in U$ the following (Sweedler-like)  notation
\begin{equation}
\label{latoconvalida}
u_+ \otimes_\Aopp u_-  :=  \alpha_\ell^{-1}(u \otimes_\ahha 1),  \qqquad
   u_{[+]} \otimes^\ahha u_{[-]}  :=  \alpha_r^{-1}(1 \otimes_\ahha u),
\end{equation}
and call both maps  $  u  \mapsto  u_+ \otimes_\Aopp u_-  $  and  $  u  \mapsto  u_{[+]} \otimes^\ahha u_{[-]}  $ {\em translation maps}.
\end{definition}

Analogous notions exist with respect to an underlying {\em right} bialgebroid structure, but we will not give the details here.

\begin{remark}
\label{left/right-Hopf-left-bialg_cocomm}
\noindent
\begin{enumerate}
\compactlist{99}
\item
In case $A=k$ is central in $U$, one can show that $\ga_\ell$ is invertible if and only if $U$ is a Hopf algebra, and the translation map reads
 $  u_+ \otimes u_-  :=  u_{(1)} \otimes S(u_{(2)})  $, where $S$ is the antipode of $U$.
On the other hand, $U$ is a Hopf algebra with invertible antipode if and only if both $\ga_\ell$ and $\ga_r$ are invertible,
and then $  u_{[+]} \otimes u_{[-]} := u_{(2)} \otimes S^{-1}(u_{(1)})  $.
%
\item
The underlying left bialgebroid in a {\em full} Hopf algebroid with bijective antipode is both a left and right Hopf algebroid (but not necessarily vice versa); see \cite[Prop.\ 4.2]{BoeSzl:HAWBAAIAD} for the details of this construction.
\end{enumerate}
\end{remark}

   The following proposition collects some properties of the translation maps  \cite{Schau:DADOQGHA}:

\begin{prop}
Let $U$ be a left bialgebroid.
\begin{enumerate}
\compactlist{99}
\item If  $  U $  is a left Hopf algebroid, the following relations hold:
\begin{eqnarray}
\label{Sch1}
u_+ \otimes_\Aopp  u_- & \in
& U \times_\Aopp U,  \\
\label{Sch2}
u_{+(1)} \otimes_\ahha u_{+(2)} u_- &=& u \otimes_\ahha 1 \quad \in U_{\!\ract} \! \otimes_\ahha \! {}_\lact U,  \\
\label{Sch3}
u_{(1)+} \otimes_\Aopp u_{(1)-} u_{(2)}  &=& u \otimes_\Aopp  1 \quad \in  {}_\blact U \! \otimes_\Aopp \! U_\ract,  \\
\label{Sch4}
u_{+(1)} \otimes_\ahha u_{+(2)} \otimes_\Aopp  u_{-} &=& u_{(1)} \otimes_\ahha u_{(2)+} \otimes_\Aopp u_{(2)-},  \\
\label{Sch5}
u_+ \otimes_\Aopp  u_{-(1)} \otimes_\ahha u_{-(2)} &=&
u_{++} \otimes_\Aopp u_- \otimes_\ahha u_{+-},  \\
\label{Sch6}
(uv)_+ \otimes_\Aopp  (uv)_- &=& u_+v_+ \otimes_\Aopp v_-u_-,
\\
\label{Sch7}
u_+u_- &=& s^\ell (\varepsilon (u)),  \\
\label{Sch8}
\varepsilon(u_-) \blact u_+  &=& u,  \\
\label{Sch9}
(s^\ell (a) t^\ell (b))_+ \otimes_\Aopp  (s^\ell (a) t^\ell (b) )_-
&=& s^\ell (a) \otimes_\Aopp s^\ell (b),
\end{eqnarray}
where in  \rmref{Sch1}  we mean the Takeuchi-Sweedler product
\begin{equation*}
\label{petrarca}
   U \! \times_\Aopp \! U   :=
   \big\{ {\textstyle \sum_i} u_i \otimes v_i \in {}_\blact U  \otimes_\Aopp  U_{\!\ract} \mid {\textstyle \sum_i} u_i \ract a \otimes v_i = {\textstyle \sum_i} u_i \otimes a \blact v_i, \ \forall a \in A \big\}.
\end{equation*}
\item
Analogously, if $  U $  is a right Hopf algebroid, one has:
\begin{eqnarray}
\label{Tch1}
u_{[+]} \otimes^\ahha  u_{[-]} & \in
& U \times^{\scriptscriptstyle A} U,  \\
\label{Tch2}
u_{[+](1)} u_{[-]} \otimes_\ahha u_{[+](2)}  &=& 1 \otimes_\ahha u \quad \in U_{\!\ract} \! \otimes_\ahha \! {}_\lact U,  \\
\label{Tch3}
u_{(2)[-]}u_{(1)} \otimes^\ahha u_{(2)[+]}  &=& 1 \otimes^\ahha u \quad \in U_{\!\bract} \!
\otimes^\ahha \! \due U \lact {},  \\
\label{Tch4}
u_{[+](1)} \otimes^\ahha u_{[-]} \otimes_\ahha u_{[+](2)} &=& u_{(1)[+]} \otimes^\ahha
u_{(1)[-]} \otimes_\ahha  u_{(2)},  \\
\label{Tch5}
u_{[+][+]} \otimes^\ahha  u_{[+][-]} \otimes_\ahha u_{[-]} &=&
u_{[+]} \otimes^\ahha u_{[-](1)} \otimes_\ahha u_{[-](2)},  \\
\label{Tch6}
(uv)_{[+]} \otimes^\ahha (uv)_{[-]} &=& u_{[+]}v_{[+]}
\otimes^\ahha v_{[-]}u_{[-]},  \\
\label{Tch7}
u_{[+]}u_{[-]} &=& t^\ell (\varepsilon (u)),  \\
\label{Tch8}
u_{[+]} \bract \varepsilon(u_{[-]})  &=&  u,  \\
\label{Tch9}
(s^\ell (a) t^\ell (b))_{[+]} \otimes^\ahha (s^\ell (a) t^\ell (b) )_{[-]}
&=& t^\ell(b) \otimes^\ahha t^\ell(a),
\end{eqnarray}
where in  \rmref{Tch1}  we mean the Sweedler-Takeuchi product
\begin{equation*}  \label{petrarca2}
   U \times^{\scriptscriptstyle A} U   :=
   \big\{ {\textstyle \sum_i} u_i \otimes  v_i \in U_{\!\bract}  \otimes^\ahha \!  \due U \lact {} \mid {\textstyle \sum_i} a \lact u_i \otimes v_i = {\textstyle \sum_i} u_i \otimes v_i \bract a,  \ \forall a \in A  \big\}.
\end{equation*}
\end{enumerate}
\end{prop}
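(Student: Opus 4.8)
The plan is to derive every identity from the single structural fact that $\alpha_\ell$ (respectively $\alpha_r$) is bijective, combined with the left bialgebroid axioms. The workhorse is the extension of the defining relation \eqref{Sch2} to a formula for $\alpha_\ell^{-1}$ on an arbitrary element: since $\alpha_\ell(u_+ \otimes_\Aopp u_-) = u \otimes_\ahha 1$ holds by definition, one checks $\alpha_\ell(x_+ \otimes_\Aopp x_- y) = x \otimes_\ahha y$ for all $x,y\in U$, so that $\alpha_\ell^{-1}(x \otimes_\ahha y) = x_+ \otimes_\Aopp x_- y$. The meta-principle is then: to prove an identity between two expressions lying in the domain of $\alpha_\ell$ (or of $\id \otimes \alpha_\ell$, $\alpha_\ell \otimes \id$, etc.), I apply the relevant Hopf--Galois map and compare the images, which are computed purely from the facts that $\Delta_\ell$ is a coassociative, counital $k$-algebra map with $\Delta_\ell(s^\ell(a)) = s^\ell(a) \otimes_\ahha 1$ and $\Delta_\ell(t^\ell(b)) = 1 \otimes_\ahha t^\ell(b)$; injectivity of $\alpha_\ell$ then closes the argument.

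The relations that cost only one application of $\alpha_\ell$ and a single structure map fall out immediately. Applying $\alpha_\ell^{-1}$ to $\alpha_\ell(u \otimes 1) = u_{(1)} \otimes u_{(2)}$ gives \eqref{Sch3}. Composing \eqref{Sch2} with the appropriate counit normalisations on each leg (via \eqref{castelnuovo}) yields \eqref{Sch7} and \eqref{Sch8}. For \eqref{Sch9} one computes $\alpha_\ell(s^\ell(a) \otimes s^\ell(b)) = s^\ell(a) \otimes_\ahha s^\ell(b) = s^\ell(a)t^\ell(b) \otimes_\ahha 1$ (the last step being the $\otimes_\ahha$-balancing $s^\ell(b) = b \lact 1$ together with $s^\ell(a)\ract b = s^\ell(a)t^\ell(b)$) and invokes injectivity. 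The Takeuchi membership \eqref{Sch1} is of the same flavour: to see $t^\ell(a)u_+ \otimes_\Aopp u_- = u_+ \otimes_\Aopp u_- t^\ell(a)$ I apply $\alpha_\ell$ to both sides; using $\Delta_\ell(t^\ell(a)) = 1 \otimes t^\ell(a)$, the commutativity of the images of $s^\ell$ and $t^\ell$, and \eqref{Sch2}, both sides map to $u \otimes_\ahha t^\ell(a)$, so injectivity gives the claim.

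The genuinely more laborious identities are the higher ones \eqref{Sch4}, \eqref{Sch5}, \eqref{Sch6}. For \eqref{Sch6} I apply $\alpha_\ell$ to the right-hand side and use that $\alpha_\ell$ is left $U$-linear for the diagonal action $w \cdot (p \otimes q) = w_{(1)}p \otimes w_{(2)}q$: peeling off $u_+$ reduces the computation to $\alpha_\ell(v_+ \otimes v_-u_-) = v \otimes_\ahha u_-$ (by \eqref{Sch2} for $v$), after which \eqref{Sch2} for $u$ collapses $u_{+(1)}v \otimes u_{+(2)}u_-$ to $uv \otimes_\ahha 1$, and injectivity concludes. For \eqref{Sch4} I apply $\id \otimes \alpha_\ell$ to the second and third legs: coassociativity rewrites the left-hand side as $u_{+(1)} \otimes u_{+(2)} \otimes u_{+(3)}u_-$, and applying $\Delta_\ell$ to the first leg of \eqref{Sch2} shows this equals $u_{(1)} \otimes u_{(2)} \otimes 1$, which is exactly the image of the right-hand side; bijectivity of $\id \otimes \alpha_\ell$ finishes it. Relation \eqref{Sch5} is handled analogously by a partial Hopf--Galois map together with coassociativity, and this is the step I expect to be the main obstacle: here the two legs on which $\alpha_\ell$ (or its inverse) must act are \emph{not adjacent} in the threefold tensor product, so one has to invoke the Takeuchi constraint \eqref{Sch1} (and its iterate) to move the middle leg past, keeping scrupulous track of which of the actions $\lact,\ract,\blact,\bract$ occur and over which ring ($\otimes_\ahha$ versus $\otimes_\Aopp$) each tensor is balanced. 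Verifying that every intermediate expression descends to the correct balanced tensor product --- rather than any deep algebraic subtlety --- is the real bookkeeping cost throughout part {\it (i)}.

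Finally, part {\it (ii)}, the relations \eqref{Tch1}--\eqref{Tch9} for a right Hopf algebroid, need not be reproved from scratch: they follow by the mirror-image argument using bijectivity of $\alpha_r$ and the analogous master formula for $\alpha_r^{-1}$. The most economical route is to apply part {\it (i)} to the coopposite left bialgebroid $U_\coop = (U, A^\op, t^\ell, s^\ell, \Delta_\ell^\coop, \epsilon)$: its left Hopf--Galois map is, after the flip $\tau$, precisely $\alpha_r$, so that $U$ is a right Hopf algebroid if and only if $U_\coop$ is a left Hopf algebroid, and the translation map of $U_\coop$ recovers $u \mapsto u_{[+]} \otimes^\ahha u_{[-]}$. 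Transporting \eqref{Sch1}--\eqref{Sch9} through this dictionary --- which interchanges source with target, $s^\ell$ with $t^\ell$, and the two Takeuchi products --- yields \eqref{Tch1}--\eqref{Tch9}.
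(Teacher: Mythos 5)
Your proposal is correct, but there is nothing in the paper to compare it against line by line: the paper states this proposition without proof, as a collection of known facts credited to Schauenburg \cite{Schau:DADOQGHA}, so what you have written is essentially a reconstruction of the argument of that cited source. Your master formula $\alpha_\ell^{-1}(x \otimes_\ahha y) = x_+ \otimes_\Aopp x_-\, y$ is the standard workhorse, and it is legitimate because right multiplication on the second leg of both ${}_\blact U \otimes_\Aopp U_\ract$ and $U_\ract \otimes_\ahha {}_\lact U$ respects the balancing relations; your derivations of \rmref{Sch1}, \rmref{Sch3}, \rmref{Sch4}, \rmref{Sch6}, \rmref{Sch7}, \rmref{Sch9} all check out. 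Two refinements are worth recording. For \rmref{Sch8} the counit must be inserted through the balanced map $x \otimes_\ahha y \mapsto x \ract \epsilon(y) = t^\ell(\epsilon(y))\,x$ (that is, $\id \otimes \epsilon$ followed by $U_\ract \otimes_\ahha A \cong U$), after which \rmref{castelnuovo}, $\Delta_\ell(t^\ell(a)) = 1 \otimes_\ahha t^\ell(a)$ and counitality finish it; the naive maps $x \otimes y \mapsto x\,t^\ell(\epsilon(y))$ or $x\,s^\ell(\epsilon(y))$ are \emph{not} well defined on the tensor product. For \rmref{Sch5}, which you only sketch, the route you anticipate does work: the partial Hopf--Galois map on legs $1$ and $3$ sends the right-hand side to $u_+ \otimes_\Aopp u_- \otimes_\ahha 1$ by \rmref{Sch2}, and the left-hand side reaches the same element by applying $\id \otimes \Delta_\ell$ to \rmref{Sch2}, then $\alpha_\ell^{-1}$ on legs $1,2$ and \rmref{Sch3}. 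Finally, your reduction of part \emph{(ii)} to part \emph{(i)} via the coopposite $(U, A^\op, t^\ell, s^\ell, \Delta_\ell^\coop, \epsilon)$ is correct and consistent with the paper's own remark on coopposites: the Hopf--Galois map of $U_\coop$ is, up to the tensor flip, exactly $\alpha_r$, so its translation map is $u \mapsto u_{[+]} \otimes^\ahha u_{[-]}$, and the dictionary $s^\ell \leftrightarrow t^\ell$, $\blact \leftrightarrow \bract$ transports \rmref{Sch1}--\rmref{Sch9} into \rmref{Tch1}--\rmref{Tch9}; this is more economical than redoing the nine computations symmetrically.
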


These two structures are not entirely independent:

\begin{lemma}
 The following mixed relations
 hold among left and right translation maps:
\begin{eqnarray}
\label{mampf1}
u_{+[+]} \otimes_\Aopp u_{-} \otimes^\ahha u_{+[-]} &=& u_{[+]+} \otimes_\Aopp u_{[+]-} \otimes^\ahha u_{[-]}, \\
\label{mampf2}
u_+ \otimes_\Aopp u_{-[+]} \otimes^\ahha u_{-[-]} &=& u_{(1)+} \otimes_\Aopp u_{(1)-} \otimes^\ahha u_{(2)}, \\
\label{mampf3}
u_{[+]} \otimes^\ahha u_{[-]+} \otimes_\Aopp u_{[-]-} &=& u_{(2)[+]} \otimes^\ahha u_{(2)[-]} \otimes_\Aopp u_{(1)},
\end{eqnarray}
where, for example, in the first equation \rmref{mampf1}
the second tensor product relates the first component with the third, and {\em mutatis mutandis} for the other identities.
\end{lemma}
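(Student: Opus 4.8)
The plan is to prove all three identities by one and the same device. Each lives in a triple tensor product assembled from the two translation maps, and since $U$ is assumed to be simultaneously a left and a right Hopf algebroid, both Hopf--Galois maps $\ga_\ell$ and $\ga_r$ are bijective. Consequently each of the partial maps $\ga_\ell \otimes \id$ and $\id \otimes \ga_r$, together with their ``crossed'' analogue acting on the first and third tensor leg, is injective on the relevant balanced tensor product. It therefore suffices to apply two such maps in succession to both sides of an identity, collapsing the translation maps by their defining relations \eqref{Sch2} and \eqref{Tch2}, until the claim is reduced to an equality that follows from \eqref{Sch2}, \eqref{Tch2} and coassociativity alone.

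I would treat \eqref{mampf2} as the model case. First apply $\ga_\ell \otimes \id$ to the first two legs. On the right-hand side the pair $u_{(1)+} \otimes_\Aopp u_{(1)-}$ is a genuine left translation, so \eqref{Sch2} (with $u$ replaced by $u_{(1)}$) collapses it and produces $u_{(1)} \otimes_\ahha 1 \otimes^\ahha u_{(2)}$, while the left-hand side becomes $u_{+(1)} \otimes_\ahha u_{+(2)}u_{-[+]} \otimes^\ahha u_{-[-]}$. Next apply $\id \otimes \ga_r$ to the last two legs: using that $\gD_\ell$ is an algebra map, factoring the result in $U \otimes U$, and invoking \eqref{Tch2} on the pair $u_{-[+]}, u_{-[-]}$ turns the left-hand side into $u_{+(1)} \otimes_\ahha u_{+(2)} \otimes_\ahha u_{+(3)}u_-$ (after a coassociativity relabelling), whereas the right-hand side becomes $u_{(1)} \otimes_\ahha u_{(2)} \otimes_\ahha 1$. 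The remaining equality is precisely $(\gD_\ell \otimes \id)$ applied to \eqref{Sch2}, hence holds, and injectivity of the two maps yields \eqref{mampf2}.

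Identity \eqref{mampf3} is the mirror image of \eqref{mampf2} under the symmetry that exchanges $u_\pm$ with $u_{[\pm]}$, $\ga_\ell$ with $\ga_r$, the relations \eqref{Sch2}--\eqref{Sch9} with \eqref{Tch2}--\eqref{Tch9}, and the coproduct with its coopposite; indeed this symmetry carries the right-hand side $u_{(1)+} \otimes_\Aopp u_{(1)-} \otimes^\ahha u_{(2)}$ of \eqref{mampf2} exactly to $u_{(2)[+]} \otimes^\ahha u_{(2)[-]} \otimes_\Aopp u_{(1)}$, so \eqref{mampf3} follows by the same two steps with the roles of $\ga_\ell$ and $\ga_r$ interchanged. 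For \eqref{mampf1} the same scheme applies, but now the factor $u_+$ (resp.\ $u_{[+]}$) has been right- (resp.\ left-) translated, so the $\otimes^\ahha$ joins the \emph{first} and \emph{third} legs. I would first apply $\ga_\ell$ to the first two legs: the right-hand pair $u_{[+]+} \otimes_\Aopp u_{[+]-}$ is a left translation of $u_{[+]}$, so \eqref{Sch2} collapses the right-hand side to $u_{[+]} \otimes_\ahha 1 \otimes^\ahha u_{[-]}$. Then I apply $\ga_r$ to the crossed first and third legs: on the right the pair $u_{[+]}, u_{[-]}$ is a right translation and \eqref{Tch2} collapses it to $1 \otimes_\ahha 1 \otimes_\ahha u$; on the left one splits the coproduct of $u_{+[+]}$ by coassociativity, applies the three-legged form of \eqref{Tch2} (obtained from it by one further coproduct) and then \eqref{Sch2}, arriving again at $1 \otimes_\ahha 1 \otimes_\ahha u$. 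Injectivity then gives \eqref{mampf1}.

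The Sweedler-index computations themselves are routine; the point demanding real care — and the step I expect to be the main obstacle — is the bookkeeping of the four actions $\lact, \ract, \blact, \bract$. At each stage one must verify that the chosen partial Galois map is defined on the correct balanced tensor product and descends there to an injective map, and, above all for the crossed identity \eqref{mampf1}, that the flip relating the second and third legs is legitimate over the relevant base. These compatibilities are exactly what the Takeuchi conditions \eqref{Sch1} and \eqref{Tch1} (which place the translation maps inside the respective Takeuchi--Sweedler products) are there to guarantee, so once they are invoked the three identities fall out.
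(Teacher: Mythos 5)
Your proposal is correct, and its backbone — apply partial Hopf--Galois maps, which are bijective on the relevant balanced tensor products, and collapse the translation maps via their defining relations — is exactly the paper's strategy; but the execution differs in a way worth noting. The paper applies a \emph{single} partial Galois map per identity and then invokes the finer Schauenburg compatibilities between translation maps and coproducts: for \rmref{mampf1} it applies $\ga_\ell\otimes\id$ and finishes with \rmref{Tch4} and \rmref{Sch2}; for \rmref{mampf2} it applies $\id\otimes\ga_r$ and finishes with \rmref{Tch2}, \rmref{Sch5} and \rmref{Sch3}; and it leaves \rmref{mampf3} to the reader. You instead apply \emph{two} Galois maps in succession and thereby reduce everything to \rmref{Sch2}, \rmref{Tch2} (and their coproduct-extended three-leg forms), coassociativity, and multiplicativity of $\gD_\ell$ — I checked your model case \rmref{mampf2} and the crossed case \rmref{mampf1}, and both computations close up as you claim. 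What each approach buys: the paper's route is shorter because it freely uses the already-quoted relations \rmref{Sch3}--\rmref{Sch5} and \rmref{Tch4} (which themselves encode exactly the coproduct-compatibility you re-derive on the fly), while yours is more self-contained, needing only the defining equations of the two translation maps, at the cost of heavier bookkeeping — in particular the crossed application of $\ga_r$ to the first and third legs in \rmref{mampf1}, and the substitutions inside expressions multiplied by $\gD_\ell(w)$, each require the well-definedness checks you flag (these rest on $\gD_\ell$ taking values in the Takeuchi product and on the $\bract$/$\ract$-linearity of $\ga_\ell$, $\ga_r$, not really on \rmref{Sch1}/\rmref{Tch1} themselves, which enter only for the balancing of the flips). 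Your coopposite-symmetry argument deducing \rmref{mampf3} from \rmref{mampf2} is also a clean and valid way to dispose of the identity the paper omits.
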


\begin{proof}
 In order to prove  \rmref{mampf1},  we apply  $  \ga_\ell \otimes \id  $  to both sides (note that this operation is well-defined on the considered tensor products); for the right hand side we obtain, by definition,
 $$
(\ga_\ell \otimes \id)( u_{[+]+} \otimes_\Aopp u_{[+]-} \otimes^\ahha u_{[-]} )
=   (u_{[+]} \otimes_\ahha 1) \otimes^\ahha u_{[-]},
$$
and for the left hand side we have
\begin{equation*}
 \begin{split}     & (\ga_\ell \otimes \id)(u_{+[+]} \otimes_\Aopp u_{-} \otimes^\ahha u_{+[-]}) = (u_{+[+](1)}
 \otimes_\ahha u_{+[+](2)}u_-) \otimes^\ahha u_{+[-]} \\
            = \ & (u_{+(1)[+]}  \otimes_\ahha u_{+(2)}u_-) \otimes^\ahha u_{+(1)[-]}
            =   (u_{[+]} \otimes_\ahha 1) \otimes^\ahha u_{[-]},
 \end{split}
\end{equation*}
using  \rmref{Tch4}  and  \rmref{Sch2}.
Since  $ \ga_\ell $  is assumed to be an isomorphism, this proves  \rmref{mampf1}.

Let us also prove  \rmref{mampf2};  the remaining identity will be left to the reader.  To this end, apply  $  \id \otimes \ga_r  $  to both sides in  \rmref{mampf2}:  for the left hand side, we obtain
\begin{equation*}
\begin{split}
 (\id \otimes \ga_r)(u_{+} \otimes_\Aopp u_{-[+]} \otimes^\ahha u_{-[-]})   &=
u_{+} \otimes_\Aopp ( u_{-[+](1)}u_{-[-]} \otimes_\ahha u_{-[+](2)} )  \\
& =   u_+ \otimes_\Aopp (1 \otimes_\ahha u_-)
\end{split}
\end{equation*}
by  \rmref{Tch2},  and where in the second equation the first tensor product relates the first component with the third.  As for the right hand side, we compute:
\begin{equation*}
 \begin{split}
 & (\id \otimes \ga_r)(u_{(1)+} \otimes_\Aopp u_{(1)-} \otimes^\ahha u_{(2)}) = u_{(1)+} \otimes_\Aopp (u_{(1)-(1)} u_{(2)} \otimes_\ahha u_{(1)-(2)}) \\
            = \ & u_{(1)++} \otimes_\Aopp (u_{(1)-}u_{(2)} \otimes_\ahha u_{(1)+-})
=   u_{+} \otimes_\Aopp (1 \otimes_\ahha u_{-}),
 \end{split}
\end{equation*}
using  \rmref{Sch5}  and  \rmref{Sch3} in the last step as follows: Eq.~\rmref{Sch3}  yields
  $  u_{(1)+} \otimes_\Aopp u_{(1)-} u_{(2)} \otimes_\ahha 1 = u \otimes_\Aopp  1 \otimes_\ahha 1  $
and applying  $ \alpha_\ell^{-1} $  to the first and the third component gives the required equality.
\end{proof}

\section{Modules over left or right Hopf algebroids}
  \label{acquaprimavera}

   In this section we collect some general results about modules over left and right Hopf algebroids.  Some of them are known, while others  seem to have passed unnoticed so far (see Note \ref{osmanstoechter} below).

\subsection{Module structures on $\Hom$-spaces and tensor products}  \label{exotic_U-actions}
Similarly as for bialgebras,
the tensor product $M_\ract \otimes_\ahha \due {M'} \lact {}$ of two left $U$-modules with left $U$-module structure given by
\begin{equation}
\label{baratti&milano}
u(m \otimes_\ahha m') := u_{(1)} m \otimes_\ahha u_{(2)}m'
\end{equation}
equips the category $\umod$ for a left bialgebroid $U$ with a monoidal structure.
On the other hand, for $M \in \umod$ and $N \in \modu$, the $\Ae$-module $\Hom_\Aopp(M_\ract, N_\bract)$ is a right $U$-module via
$$
(fu)(m) := f(u_{(1)}m)u_{(2)}.
$$
The existence of a translation map if $U$ is, on top, a left or right Hopf algebroid makes it possible to endow $\Hom$-spaces and tensor products of $U$-modules with further natural $U$-module structures. The proof of the following  proposition  is straightforward.

\begin{proposition}
\label{structures}
Let $(U, A)$ be a left bialgebroid, $M, M' \in \umod$ and $N, N' \in \modu$ be left resp.\ right $U$-modules, denoting the respective actions by juxtaposition.
\begin{enumerate}
\compactlist{99}
\item
Let $(U,A)$ be additionally a left Hopf algebroid.
\begin{enumerate}
\compactlist{99}
\item
The $\Ae$-module $\Hom_{\Aopp}(M,M') $ carries a left  $ U $-module structure given by
\begin{equation}
\label{gianduiotto1}
(uf)(m)  :=  u_+ \big( f(u_-m) \big).
\end{equation}
In particular,  $M^{\scalast} $  is endowed with a left  $ U $-module  structure.
\item
The $\Ae$-module $\Hom_\ahha(N,N')  $ carries a left $U$-module structure via
\begin{equation}
\label{lingotto1}
(u \rightslice f)(n)  :=  \big( f(nu_+) \big)u_-.
\end{equation}
\item
The $\Ae$-module $\due N \blact {} \otimes_\Aopp M_\ract$
carries a right  $ U $-module  structure via
\begin{equation}
\label{superga1}
(n \otimes_\Aopp m) \leftslice u  :=  nu_+ \otimes_\Aopp u_-m.
\end{equation}
\end{enumerate}
\item
Let  $(U,A)$  be a right Hopf algebroid instead.
\begin{enumerate}
\compactlist{99}
\item
The $\Ae$-module  $  \Hom_\ahha(M,M')  $  carries a left  $ U $-module structure  given by
\begin{equation}
\label{gianduiotto2}
(uf)(m)  :=  u_{[+]}\big( f(u_{[-]}m) \big).
\end{equation}
In particular,  $ M_{\scalastd} $  is naturally endowed with a left  $ U $-module  structure.
\item
The $\Ae$-module
$\Hom_{\Aopp}(N,N')  $  carries a left  $ U $-module structure  given by
\begin{equation}
\label{lingotto2}
(u \rightslice f)(n)  :=  \big( f(nu_{[+]}) \big)u_{[-]}.
\end{equation}
\item
The $\Ae$-module
$ N_{\bract} \otimes^\ahha \due M \lact {}  $
carries a right  $ U $-module structure  given by
\begin{equation}
\label{superga2}
(n \otimes^\ahha m) \leftslice u  :=  nu_{[+]} \otimes^\ahha u_{[-]}m.
\end{equation}
\end{enumerate}
\end{enumerate}
\end{proposition}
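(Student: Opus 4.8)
The plan is to verify, for each of the six claims, the two defining properties of a module structure: first that the proposed formula actually produces an element of the asserted $\Hom$-space (respectively, descends to the asserted balanced tensor product), and second that it obeys the unit and associativity axioms. Since parts \textit{(i)} and \textit{(ii)} are mirror images of one another---the former built from the left translation map $u \mapsto u_+ \otimes_\Aopp u_-$ together with the relations \rmref{Sch1}--\rmref{Sch9}, the latter from the right translation map $u \mapsto u_{[+]} \otimes^\ahha u_{[-]}$ together with \rmref{Tch1}--\rmref{Tch9}---it suffices to carry out \textit{(i)} in detail and obtain \textit{(ii)} by the evident dualisation. Within \textit{(i)}, the three formulas \rmref{gianduiotto1}, \rmref{lingotto1}, \rmref{superga1} are themselves variations on the same bookkeeping, so I would write out one carefully and note that the others follow verbatim.

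For well-definedness, the one input needed is the Takeuchi property \rmref{Sch1}, $u_+ \otimes_\Aopp u_- \in U \times_\Aopp U$, which says precisely that $u_+ \ract a \otimes_\Aopp u_- = u_+ \otimes_\Aopp a \blact u_-$, i.e.\ that a factor $t^\ell(a)$ may be transported between the two legs of the translation map. Concretely, for \rmref{gianduiotto1} one checks that $uf$ again respects the $\ract$-action: since $u_-(m \ract a) = (a \blact u_-)m$, relation \rmref{Sch1} moves $a$ across to give $u_+\big(f(u_-(m \ract a))\big) = (u_+ \ract a)\big(f(u_-m)\big) = (uf)(m) \ract a$, using that $x \ract a = t^\ell(a)x$ acts from the left on $M'$. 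The same relation handles the $\blact$-compatibility of \rmref{lingotto1} and the compatibility of \rmref{superga1} with the balancing over $\Aopp$, in each case merely shuttling $t^\ell(a)$ from one tensor leg (or argument of $f$) to the other.

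For the module axioms, unitality follows from \rmref{Sch9} with $a = b = 1$, which gives $1_+ \otimes_\Aopp 1_- = 1_\uhhu \otimes_\Aopp 1_\uhhu$, so that each formula reduces to the identity at the unit. Associativity is exactly where the anti-multiplicativity relation \rmref{Sch6}, $(uv)_+ \otimes_\Aopp (uv)_- = u_+v_+ \otimes_\Aopp v_-u_-$, enters: iterating \rmref{gianduiotto1} yields $\big(u(vf)\big)(m) = u_+\big(v_+(f(v_-u_-m))\big)$, while \rmref{Sch6} rewrites $\big((uv)f\big)(m) = (uv)_+\big(f((uv)_-m)\big)$ into precisely the same expression; the reversed order $v_-u_-$ in \rmref{Sch6} matches the reversed order in which the two actions are applied, and likewise for \rmref{lingotto1} and for the right action \rmref{superga1}, where $\big((n \otimes_\Aopp m) \leftslice u\big) \leftslice v = nu_+v_+ \otimes_\Aopp v_-u_-m = (n \otimes_\Aopp m) \leftslice (uv)$.

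The computations are routine; the only genuine care required---and the step I would watch most closely---is type-checking, namely keeping track of which of the four $\Ae$-structures $\lact, \ract, \blact, \bract$ is active at each slot, so that \rmref{Sch1} is applied across the correct tensor factor and the output lands in the intended $\Hom_\ahha$ versus $\Hom_{\Aopp}$ (or the correct balanced tensor product). Once this bookkeeping is fixed, part \textit{(ii)} is obtained by replacing $(\cdot)_\pm$ with $(\cdot)_{[\pm]}$ and \rmref{Sch1}, \rmref{Sch6}, \rmref{Sch9} by their counterparts \rmref{Tch1}, \rmref{Tch6}, \rmref{Tch9}.
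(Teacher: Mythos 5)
Your proposal is correct and matches the paper's intent exactly: the paper offers no written proof (it declares the verification straightforward and defers to the literature in Note \ref{osmanstoechter}), and the direct check you outline---well-definedness and $A$-linearity of the output via the Takeuchi property \rmref{Sch1}, unitality via \rmref{Sch9}, associativity via the anti-multiplicativity \rmref{Sch6}, with \rmref{Tch1}, \rmref{Tch6}, \rmref{Tch9} for part \textit{(ii)}---is precisely that verification. You invoke each relation at the correct spot (in particular \rmref{Sch1} for the $\ract$- resp.\ $\blact$-compatibility in the $\Hom$ cases and for the balancing in \rmref{superga1}), so nothing essential is missing.
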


\begin{note}
\label{osmanstoechter}
These structures are well-known for  $ D $-modules  (that is, when  $  U = D_X  $, see \cite{Bor:ADM, Kas:DMAMC})  and were later extended to  $ V^\ell(L) $-modules  in \cite{Che:PDFKALS}, \cite{Chemla3}.  The results about tensor products can be found in  \cite{KowKra:DAPIACT}, whereas \rmref{gianduiotto1} serves in \cite[Thm.~3.5]{Schau:DADOQGHA} to characterise a possible (left) Hopf structure on a bialgebroid.
\end{note}

\subsection{Switching left and right modules: dualising modules}
\label{malgenauerhinsehen}

We investigate now conditions which imply an equivalence between the categories of left and of right  $ U $-modules  for a left bialgebroid  $ U $ which is simultaneously a left and right Hopf algebroid.  As in other frameworks, this is guaranteed by the existence of a suitable {\em dualising module}.  This is the content of the next result, which generalises the well-known equivalence of categories between left and right  $ \cD $-modules  (due to Borel \cite{Bor:ADM} and Kashiwara \cite{Kas:DMAMC}). It also generalises the equivalence between left and right modules over a Lie-Rinehart algebra, {\em cf.}~\cite{Che:PDFKALS}.

\begin{proposition}
\label{left and right modules}
 Let  $(U,A)$  be simultaneously a left  and  right Hopf algebroid.
Assume that there exists a right  $ U $-module  $ P $, where $ P_\bract $  is finitely generated projective over $\Aop$, such that
\begin{enumerate}
\compactlist{99}
\item
the left $U$-module morphism
 $$
A \to \Hom_{\Aopp}(P,P),  \quad  a  \mapsto  \{ p \mapsto a \blact p \}
$$
 is an isomorphism of $k$-modules;
\item
the evaluation map
\begin{equation}
\label{lacartachenontagliaglialberi}
\due P \blact {} \otimes_\Aopp \Hom_{\Aopp}(P  , N)_{\ract} \to N, \quad p  \otimes_\Aopp  \phi  \mapsto  \phi(p)
\end{equation}
is an isomorphism for any $ N \in \modu $.
\end{enumerate}
   Then
$$
\umod \to \modu, \quad M \mapsto \due P \blact {} \otimes_\Aopp M_\ract
$$
is an equivalence of categories with quasi inverse given by $  N'  \mapsto  \Hom_{\Aopp}(P , N')  $.
\end{proposition}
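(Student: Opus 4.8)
My plan is to realise the asserted equivalence through the evaluation and coevaluation maps, showing that they are mutually quasi-inverse natural isomorphisms. Write $F := \due P \blact {} \otimes_\Aopp (-)_\ract$ and $G := \Hom_\Aopp(P,-)$. By the tensor-product and $\Hom$-constructions of Proposition~\ref{structures}, the \emph{left} Hopf structure makes $F$ a functor $\umod \to \modu$ (with $(p \otimes_\Aopp m)\leftslice u = pu_+ \otimes_\Aopp u_-m$), while the \emph{right} Hopf structure makes $G$ a functor $\modu \to \umod$ (with $(u \rightslice \phi)(n) = \phi(nu_{[+]})u_{[-]}$); this is exactly why both Hopf structures are assumed. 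Functoriality on morphisms is routine, so the substance lies in the two natural transformations.

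First I would treat the counit, i.e.\ the evaluation map $\mu_\enne\colon \due P \blact {}\otimes_\Aopp \Hom_\Aopp(P,N)_\ract \to N$ of~\rmref{lacartachenontagliaglialberi}. Hypothesis~\emph{(ii)} hands it to me as a $k$-linear isomorphism, natural in $N$, so I only need to check that it is right $U$-linear. Expanding $\mu_\enne\big((p \otimes_\Aopp \phi)\leftslice u\big) = (u_- \rightslice \phi)(pu_+) = \phi\big(pu_+(u_-)_{[+]}\big)(u_-)_{[-]}$ and rewriting $u_+ \otimes_\Aopp (u_-)_{[+]} \otimes^\ahha (u_-)_{[-]}$ via the mixed relation~\rmref{mampf2} as $u_{(1)+} \otimes_\Aopp u_{(1)-}\otimes^\ahha u_{(2)}$, the identity $u_{(1)+}u_{(1)-} = s^\ell(\epsilon(u_{(1)}))$ from~\rmref{Sch7}, together with $\Aop$-linearity of $\phi$ and counitality, collapses the expression to $\phi(p)\,u$. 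Thus $\mu$ is a natural isomorphism $FG \Rightarrow \id_\modu$.

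The harder half is the unit, namely the coevaluation $\nu_\emme\colon M \to \Hom_\Aopp\big(P,\due P \blact {}\otimes_\Aopp M_\ract\big)$, $m \mapsto \{p \mapsto p \otimes_\Aopp m\}$. I would first verify left $U$-linearity: starting from $(u \rightslice \nu_\emme(m))(p) = (pu_{[+]}\otimes_\Aopp m)\leftslice u_{[-]} = pu_{[+]}(u_{[-]})_+ \otimes_\Aopp (u_{[-]})_-m$, the mixed relation~\rmref{mampf3} turns $u_{[+]}\otimes^\ahha (u_{[-]})_+ \otimes_\Aopp (u_{[-]})_-$ into $u_{(2)[+]}\otimes^\ahha u_{(2)[-]}\otimes_\Aopp u_{(1)}$, and then $u_{(2)[+]}u_{(2)[-]} = t^\ell(\epsilon(u_{(2)}))$ of~\rmref{Tch7}, pushed across $\otimes_\Aopp$ and combined with counitality, gives $p \otimes_\Aopp um = \nu_\emme(um)(p)$. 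To see that $\nu_\emme$ is bijective I would reduce to $M=A$: because $P_\bract$ is finitely generated projective over $\Aop$, the canonical comparison map $\Hom_\Aopp(P,P)\otimes_\Aopp M \to \Hom_\Aopp\big(P,\due P \blact {}\otimes_\Aopp M_\ract\big)$, $f \otimes_\Aopp m \mapsto \{p \mapsto f(p)\otimes_\Aopp m\}$, is an isomorphism, natural in $M$. Under the canonical identification $\due P \blact {}\otimes_\Aopp A_\ract \cong P$, $p \otimes_\Aopp a \mapsto a \blact p$, the map $\nu_\ahha$ becomes exactly $a \mapsto \{p \mapsto a \blact p\}$; hence composing the comparison map with the isomorphism $A \cong \Hom_\Aopp(P,P)$ of hypothesis~\emph{(i)} and the unit constraint $A \otimes_\Aopp M \cong M$ displays $\nu_\emme$ as an isomorphism for all $M$. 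Being $U$-linear and bijective, $\nu$ is a natural isomorphism $\id_\umod \Rightarrow GF$.

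Once $\mu$ and $\nu$ are available, $F$ and $G$ are quasi-inverse and the equivalence $\umod \simeq \modu$ follows. I expect the unit to be the main obstacle on both counts: its $U$-linearity, which forces one to invoke the right Hopf structure through~\rmref{mampf3} and~\rmref{Tch7}, and its bijectivity, whose reduction to hypothesis~\emph{(i)} rests on commuting $\Hom_\Aopp(P,-)$ past $-\otimes_\Aopp M$ --- the step where the finite projectivity of $P_\bract$ is essential.
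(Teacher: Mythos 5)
Your proposal is correct and follows essentially the same route as the paper's proof: the counit is the evaluation map, shown to be right $U$-linear via \rmref{mampf2} and \rmref{Sch7} and invertible by hypothesis \emph{(ii)}, while the unit is identified, through the finite-projectivity comparison isomorphism $\Hom_\Aopp(P,P)\otimes M \cong \Hom_\Aopp(P, P\otimes M)$ and hypothesis \emph{(i)}, as an isomorphism, with its $U$-linearity resting on \rmref{mampf3}. The only (cosmetic) difference is that the paper establishes left $U$-linearity for the general comparison map $M_\ract \otimes_\ahha {}_\lact\Hom_\Aopp(N,N') \to \Hom_\Aopp(N, \due {N'} \blact {}\otimes_\Aopp M_\ract)$ and specialises, whereas you verify it for the coevaluation directly (using additionally \rmref{Tch7}) and invoke the comparison map only for bijectivity.
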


\begin{proof}
For $M \in \umod$ and $N, N' \in \modu $, one checks with \rmref{mampf3} that
the map
  $$
M_\ract \otimes_\ahha {}_\lact \Hom_{\Aopp}(N, N') \to \Hom_{\Aopp}(N,  \due {N'} \blact {} \otimes_\Aopp M_\ract), \
m \, \otimes_\ahha \chi \mapsto \{n \mapsto \chi(n) \otimes_\Aopp m \}
$$
is a morphism of left  $U $-modules, where the left $U$-module structure on the left hand side is given by \rmref{baratti&milano} combined with \rmref{lingotto2}, and on the right hand side by \rmref{lingotto2} combined with \rmref{superga1}. It is even an isomorphism if  $N_\bract $  is finitely generated projective over $A$.
On the other hand, using  \rmref{mampf2}  and  \rmref{Sch7},  one easily sees that
the evaluation \rmref{lacartachenontagliaglialberi}
is a morphism of right  $ U $-modules;  it is then an isomorphism by hypothesis, which finishes the proof.
\end{proof}


\begin{remark}
A right $U$-module $P$ with the properties as in the above proposition appeared in various contexts in the literature: we shall call it a  {\em dualising module}.  We refer to \cite{Che:PDFKALS, KowKra:DAPIACT, Hue:DFLRAATMC} for applications and details, and in particular to the situation in \S\ref{heisz}.
\end{remark}

\section{Comodule equivalences and induced maps between duals}

\label{fallouts}

The aim of this section is to construct a map between the left and right dual of a left Hopf algebroid, which in some sense replaces the missing antipode on either of the duals. This can be essentially done in two ways, either by a quite straightforward generalisation of the antipode construction on the dual of a cocommutative left Hopf algebroid as in \cite{KowPos:TCTOHA}, or by considering Ph\`ung's comodule equivalence in \cite{Phu:TKDFHA} as a starting point, as suggested by the referee of the present paper. To pursue the latter approach, we will review and slightly extend the results in {\em op.~cit.}

\subsection{A categorical equivalence for comodules}

The following theorem, originally due to \cite{Phu:TKDFHA}, shows that under the given conditions every right $U$-comodule can be transformed into a left one (resp.\ vice versa in the second case). We repeat it here for future use and also slightly extend it by saying that the two given functors are quasi-inverse to each other and that they are (strict) monoidal:

\begin{theorem}
\label{U-comod <-> comod-U}
Let $(U,A)$ be a left bialgebroid.

\begin{enumerate}
\compactlist{99}
\item
Let $(U,A)$ be additionally a left Hopf algebroid such that $U_{\ract}$ is projective.
Then there exists a (strict) monoidal functor  $F:  \comodu \to \ucomod  $;  namely, if $M$ is a right $U$-comodule with coaction
$m \mapsto m_{(0)} \otimes_\ahha m_{(1)}  $,  then
\begin{equation}
\label{borromini}
\lambda_\emme: M \to U_\ract \otimes_\ahha M  , \quad m \mapsto  m_{(1)-} \otimes_\ahha  m_{(0)} \epsilon(m_{(1)+})  ,
\end{equation}
defines a left comodule structure on $M$ over $U$.
\item
Let  $(U,A)$  be a right Hopf algebroid such that $_{\lact}U$ is projective.
 Then there exists a (strict) monoidal functor  $ G: \ucomod \to \comodu  $;  namely, if $N$ is a left $U$-comodule with coaction
$n \mapsto n_{(-1)} \otimes_\ahha n_{(0)}  $,  then
\begin{equation}
\label{bernini}
\rho_\enne: N \to N \otimes_\ahha \due U \lact {}  , \quad n \mapsto \epsilon(n_{(-1)[+]}) n_{(0)} \otimes_\ahha  n_{(-1)[-]}  ,
\end{equation}
defines a right comodule structure on $N$ over $U$.
\item
 If $U$ is both a left and right Hopf algebroid and if both $U_{\ract}$ and $_{\lact}U$ are $A$-projective,
then the functors mentioned in {\it (i)} and {\it (ii)} are quasi-inverse to each other and we have an equivalence
$$
\ucomod \simeq \comodu
$$
of monoidal categories.
\end{enumerate}
\end{theorem}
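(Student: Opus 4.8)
The plan is to establish that the functors $F$ and $G$ produced in parts \emph{(i)} and \emph{(ii)} are quasi-inverse to one another, and then to note that monoidality is automatic. The hypotheses of \emph{(iii)} (both $U_\ract$ and $_\lact U$ projective, $U$ both a left and right Hopf algebroid) are exactly what is needed for \emph{both} $F$ and $G$ to be defined. The decisive structural observation is that neither $F$ nor $G$ touches the underlying $A$-module or any comodule morphism: they merely reassign the coaction. Hence $G\circ F$ and $F\circ G$ are already the identity on objects-viewed-as-modules and on all morphisms, and the whole argument reduces to checking that the reconstructed coaction coincides with the original one.

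First I would run $G\circ F$ on a right comodule $(M,\delta)$ with $\delta(m)=m_{(0)}\otimes_\ahha m_{(1)}$. Feeding the left coaction \rmref{borromini} into \rmref{bernini} yields the candidate right coaction $m\mapsto \epsilon(m_{(1)-[+]})\,m_{(0)}\,\epsilon(m_{(1)+})\otimes_\ahha m_{(1)-[-]}$, and the task is to identify it with $\delta(m)$. I would apply the mixed relation \rmref{mampf2} to $u=m_{(1)}$, rewriting $m_{(1)+}\otimes_\Aopp m_{(1)-[+]}\otimes^\ahha m_{(1)-[-]}$ as $m_{(1)(1)+}\otimes_\Aopp m_{(1)(1)-}\otimes^\ahha m_{(1)(2)}$, and then use coassociativity of $\delta$ to pass to the iterated coaction $m_{(0)(0)}\otimes_\ahha m_{(0)(1)}\otimes_\ahha m_{(1)}$. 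After this substitution the surviving expression is $\epsilon(m_{(0)(1)-})\,m_{(0)(0)}\,\epsilon(m_{(0)(1)+})\otimes_\ahha m_{(1)}$, in which the two $\epsilon$-contractions act precisely on the inner coaction $m_{(0)(0)}\otimes_\ahha m_{(0)(1)}$. By the counit axiom of the left comodule $F(M)$ — which is nothing but the assertion, already granted by part \emph{(i)}, that \rmref{borromini} defines a coaction — this collapses to $m_{(0)}$, leaving $m_{(0)}\otimes_\ahha m_{(1)}=\delta(m)$. Thus $G\circ F=\id_{\comodu}$.

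The computation for $F\circ G$ is entirely symmetric. Starting from a left comodule $(N,\lambda)$ with $\lambda(n)=n_{(-1)}\otimes_\ahha n_{(0)}$, composing \rmref{bernini} with \rmref{borromini} gives the candidate left coaction $n\mapsto n_{(-1)[-]-}\otimes_\ahha \epsilon(n_{(-1)[+]})\,n_{(0)}\,\epsilon(n_{(-1)[-]+})$. Here I would invoke the mixed relation \rmref{mampf3} (in place of \rmref{mampf2}) applied to $u=n_{(-1)}$, followed by coassociativity of $\lambda$ and the counit axiom of the right comodule $G(N)$ furnished by part \emph{(ii)}, to collapse the expression to $n_{(-1)}\otimes_\ahha n_{(0)}=\lambda(n)$. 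This yields $F\circ G=\id_{\ucomod}$, so $F$ and $G$ are quasi-inverse and $\ucomod\simeq\comodu$.

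Finally, monoidality comes for free: $F$ and $G$ are strict monoidal by parts \emph{(i)} and \emph{(ii)}, and the natural isomorphisms $G\circ F\cong\id$ and $F\circ G\cong\id$ just obtained are identities, hence trivially monoidal. Therefore $F$ is a strict monoidal equivalence with quasi-inverse $G$, giving the asserted equivalence of monoidal categories. The step I expect to demand the most care is not the translation-map algebra itself — that is handled cleanly once \rmref{mampf2}/\rmref{mampf3} are combined with the counit axioms of $F(M)$ and $G(N)$ — but rather the bookkeeping of the four $A$-actions $\lact,\ract,\blact,\bract$, ensuring that every partial application of $\epsilon$ and every insertion of a mixed relation is well defined over the various balanced tensor products $\otimes_\ahha$, $\otimes_\Aopp$ and $\otimes^\ahha$.
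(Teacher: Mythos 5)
Your proposal covers only part \emph{(iii)} of the theorem: parts \emph{(i)} and \emph{(ii)} are invoked as black boxes (``already granted by part \emph{(i)}'', ``furnished by part \emph{(ii)}'') rather than proved. But the statement includes \emph{(i)} and \emph{(ii)}, and this is where the paper's proof does most of its work and where the projectivity hypotheses actually enter. Concretely, for \emph{(i)} one must first show that \rmref{borromini} is well defined at all: the expression $m_{(1)-}\otimes_\ahha m_{(0)}\epsilon(m_{(1)+})$ is built by applying the translation map to one leg of a class $m_{(0)}\otimes_\ahha m_{(1)}\in M\otimes_\ahha U$, and the translation map lives on a different tensor product, so this requires justification. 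The paper resolves this by corestricting the coaction to the Takeuchi product $M\times_\ahha U$ and the translation map to $U\times_\Aopp U$, and then using Takeuchi's result that the canonical map $(M\times_\ahha U)\times_\Aopp U\to M\times_\ahha U\times_\Aopp U$ is an isomorphism precisely when $U_\ract$ is projective --- the only place where the projectivity hypothesis is used. After that one still has to verify $\Ae$-linearity (via \rmref{Sch9}), coassociativity (via \rmref{Sch5} and \rmref{Sch4}), counitality (via \rmref{Takeuchicoaction} and \rmref{castelnuovo}), and the strict monoidality of $F$ (via \rmref{Sch6} and \rmref{Sch9}, with the codiagonal coactions involving a flip). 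None of this appears in your proposal, and within a self-contained proof of the full statement the counitality of $F(M)$ on which your part-\emph{(iii)} argument leans is therefore assumed rather than established.

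The part you do prove is essentially correct and runs parallel to the paper's argument: the same structural observation (the functors only reassign coactions, so only the coactions need comparing), the same key identities \rmref{mampf2} and \rmref{mampf3}, and coassociativity of the original coaction. Where you diverge is the final collapse: you invoke counitality of the \emph{new} coaction on $F(M)$ (resp.\ $G(N)$), applied at $m_{(0)}$ (resp.\ $n_{(0)}$), whereas the paper unwinds the induced $A$-action explicitly, using counitality of the \emph{original} coaction together with \rmref{Takeuchicoaction}, \rmref{castelnuovo}, and \rmref{Sch8} (resp.\ \rmref{Tch8}). Your shortcut is legitimate --- counitality of $F(M)$ is part of what \emph{(i)} asserts, so no circularity arises once \emph{(i)} is in place --- and it is arguably cleaner; but it makes the dependence on \emph{(i)}/\emph{(ii)} all the more essential, which is exactly what is missing. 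Your closing remark that monoidality of the equivalence is automatic, because the comparison isomorphisms are identities and $F$, $G$ are strict monoidal, is fine and consistent with the paper.
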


\begin{proof}
%
 Let us first prove that (\ref{borromini}) is well defined.
For any right $U$-comodule  $ M $  with coaction  $\rho: M \to M \otimes_\ahha U$, there is a well-defined map $\id_\emme \otimes_\ahha \epsilon: M \otimes_\ahha U \to M$. Its restriction to the Takeuchi product $M \times_\ahha U$ is a left $A$-module map as shows the following equation:  for any $\sum_i m_i \otimes u_i \in M \times_\ahha U$ and any $a \in A$, one has
$$
\sum_i m_i \eps(a \blact u_i) = \sum_i m_i \eps(u_i \bract a)  = \sum_i a m_i \eps(u_i).
$$
Thus, there is a well-defined map
$$
\id_\emme \times_\ahha \eps : \,
M \times_\ahha U \to M, \quad
\sum_i m_i \otimes u_i \mapsto \sum_i m_i \eps(u_i),
$$
and hence, in particular, the map 
\begin{equation}  
\label{def-phi}
 \phi := (\id_\emme \times_\ahha \eps) \otimes_\Aopp \id_\uhhu :  \,
(M \times_\ahha U) \times_\Aopp U  \to M \times_\Aopp U
\end{equation}
is well-defined, too.

On the other hand, any right coaction corestricts to a map $M \to M \times_\ahha U$;
similarly, the translation map
$\beta^{-1}(u \otimes_\ahha 1) = u_+ \otimes_\Aopp u_-$ of  $ U $  corestricts to a map $U \to U \times_\Aopp U $. Combining these two maps gives a map
\begin{equation}  
\label{def-psi}
\psi:  M \to M \times_\ahha (U \times_\Aopp U), 
\end{equation}
and it is clear that if we could combine $\phi$ in \rmref{def-phi}
with $\psi$ in \rmref{def-psi}
followed by a tensor flip, this would
yield the map \rmref{borromini}.

Now the problem is that usually $(M \times_\ahha U) \times_\Aopp U $ and $M \times_\ahha (U \times_\Aopp U)$ are different, hence the two maps might not be composable. Introducing as in \cite[Def.~1.4]{Tak:GOAOAA} the triple Takeuchi product 
\begin{equation*}
\begin{split}
   M \times_\ahha U \times_\Aopp U   :=  
\{\textstyle\sum_i & m_i  \otimes u_i\otimes v_i  \in M \otimes_\ahha U \otimes_\Aopp U \mid \\
& \textstyle \sum_i  am_i \otimes u_i \ract b \otimes v_i 
=  \sum_i m_i \otimes u_i \bract a \otimes b \blact v_i, \ \forall \ a,b \in A \}.
\end{split}
\end{equation*}
It can be seen that $ \psi $ 
actually maps to  $M \times_\ahha U \times_\Aopp U $ 
but
it is a priori not clear whether $ \phi $  can be directly defined on $M \times_\ahha U \times_\Aopp U $ so as to make the two maps composable.

However, in any case there are always maps
  $$  
\gamma: M \times_\ahha (U \times_\Aopp U) \to M \times_\ahha U \times_\Aopp U,  \quad m \otimes_\ahha u \otimes_\Aopp v \mapsto m \otimes_\ahha u \otimes_\Aopp v
$$
and
  $$
\ga: (M \times_\ahha U) \times_\Aopp U \to M \times_\ahha U \times_\Aopp U,  \quad  m \otimes_\ahha u \otimes_\Aopp v \mapsto m \otimes_\ahha u \otimes_\Aopp v.
$$

If now $U_\ract$ is projective, $\ga$ is an isomorphism \cite[Prop.~1.7]{Tak:GOAOAA}; then the composition $\tau \circ \phi
\circ \alpha^{-1} \circ \gamma \circ \psi$  
of well-defined maps (where  $ \tau $  is the tensor flip) yields a well-defined map again, and  on an element $m \in M$ it is an easy check that this gives the map $\gl_\emme$ in \rmref{borromini}.


That the so-defined map $\gl_\emme$ is $\Ae$-linear follows from the $\Ae$-linearity of the right coaction along with \rmref{Sch9}. That $\lambda_\emme$ indeed defines a left $U$-coaction is an easy check  using \rmref{Sch5} and \rmref{Sch4}, the counitality of the bialgebroid $U$, and the coassociativity with the $\Ae$-linearity of the right $U$-coaction on $M$ again: we have for $m \in M$
\begin{equation*}
\begin{split}
(\gD_\ell &\otimes \id)\lambda_\emme(m)
=  m_{(1)-(1)} \otimes_\ahha  m_{(1)-(2)} \otimes_\ahha  m_{(0)} \epsilon(m_{(1)+}) \\
&=  m_{(1)-} \otimes_\ahha  m_{(1)+-} \otimes_\ahha  m_{(0)} \epsilon(m_{(1)++}) \\
&=  m_{(1)-} \otimes_\ahha  \big( t^\ell \epsilon(m_{(1)+(2)}) m_{(1)+(1)} \big)_- \otimes_\ahha  m_{(0)} \epsilon\big(\big(t^\ell \epsilon(m_{(1)+(2)}) m_{(1)+(1)}
 \big)_+\big) \\
&=  m_{(2)-} \otimes_\ahha  \big( t^\ell \epsilon(m_{(2)+}) m_{(1)} \big)_- \otimes_\ahha  m_{(0)} \epsilon\big(\big( t^\ell \epsilon(m_{(2)+}) m_{(1)} \big)_+\big)
\\
&= (\id \otimes  \lambda_\emme)\lambda_\emme(m).
\end{split}
\end{equation*}
The counitality of $\lambda_\emme$ follows from \rmref{Takeuchicoaction} along with the second equation in \rmref{castelnuovo}.

   As for the claim that the so-given functor $F: \comodu \to \ucomod$ is (strict) monoidal, observe first that for any two $M, M'$ in the monoidal category $\comodu$, their tensor product  $ M \otimes_\ahha M' $  is a right $U$-comodule by means of the codiagonal coaction $m \otimes_\ahha m' \mapsto (m_{(0)} \otimes_\ahha m'_{(0)}) \otimes_\ahha  m'_{(1)}  m_{(1)}$,
that is, with a flip in the factors in $U$. On the other hand, the tensor product of two $N, N'$ in the monoidal category $\ucomod$ becomes a left $U$-comodule again
via
$n \otimes_\ahha n' \mapsto  n_{(-1)}  n'_{(-1)} \otimes_\ahha (n_{(0)} \otimes_\ahha n'_{(0)})$.
By the bialgebroid properties, \rmref{Sch6}, and \rmref{Takeuchicoaction}
it is then simple to see that
\begin{equation*}
 \begin{split}
( m'_{(1)}  m_{(1)})_- &\otimes_\ahha ( m_{(0)} \otimes_\ahha  m'_{(0)}) \epsilon\big(( m'_{(1)}  m_{(1)})_+\big)
\\
&=
 m_{(1)-}  m'_{(1)-} \otimes_\ahha \big( m_{(0)} \otimes_\ahha  m'_{(0)} \epsilon\big(m'_{(1)+} s^\ell(\epsilon(m_{(1)+}))\big)  \big)
\\
&=
 m_{(1)-}  m'_{(1)-} \otimes_\ahha \big( m_{(0)}\epsilon(m_{(1)+}) \otimes_\ahha  m'_{(0)} \epsilon(m'_{(1)+}) \big),
\end{split}
\end{equation*}
that is, $F(M \otimes_\ahha M') = F(M) \otimes_\ahha F(M')$. Also, the unit object in both $\comodu$ and $\ucomod$ is given by $A$ with coaction $a \mapsto t^\ell(a)$ resp.\ $a \mapsto s^\ell(a)$, and $F(A) = A$ now follows from \rmref{Sch9}. Moreover, note that $F$ does not affect the underlying $\Ae$-module structures of the comodules in question, and hence its (strict) monoidality follows.

The proof of {\em (ii)\/} is similar, and the last claim follows by the preceding two combined with
a direct computation: applying $GF$ to a right comodule $M \in \comodu$, the resulting right coaction on $M$ reads
$$
M \to M \otimes_\ahha \due U \lact {}, \quad m \mapsto
\epsilon(m_{(1)-[+]}) m_{(0)} \epsilon(m_{(1)+}) \otimes_\ahha m_{(1)-[-]}.
$$
By using \rmref{mampf2}, the coassociativity and counitality of the original right coaction on $M$, \rmref{Takeuchicoaction}, \rmref{castelnuovo}, and \rmref{Sch8} one obtains
\begin{equation*}
\begin{split}
\epsilon(m_{(1)-[+]}) m_{(0)} \epsilon(m_{(1)+}) \otimes_\ahha m_{(1)-[-]} &= \epsilon(m_{(1)-}) m_{(0)} \epsilon(m_{(1)+}) \otimes_\ahha m_{2} \\
&= m_{(0)} \epsilon( \epsilon(m_{(1)-}) \blact m_{(1)+}) \otimes_\ahha m_{(2)} \\
&= m_{(0)} \epsilon(m_{(1)})  \otimes_\ahha m_{(2)} =  m_{(0)} \otimes_\ahha m_{(1)},
\end{split}
\end{equation*}
that is, the right coaction on $M$ we started with. An analogous consideration holds for $FG$ using \rmref{mampf3}, \rmref{Tch8}, and the Takeuchi property that holds for left $U$-comodules analogous to \rmref{Takeuchicoaction}.
\end{proof}

\begin{remark}
Note that the equivalence in  Theorem \ref{U-comod <-> comod-U}  does {\em not\/}  boil down to the usual equivalence of left and right comodules via the antipode (as there is no antipode for left or right Hopf algebroids, not even if the bialgebroid is simultaneously both). Even if we dealt with a full Hopf algebroid, this is still a different kind of equivalence (compared to
 the construction in \cite[Remark 4.6]{Boe:HA}), as follows from the considerations in \S\ref{napoleon} and \S\ref{enoteca} below.
For example, if the left Hopf algebroid $U$ is considered a right comodule over itself via the coproduct, the left $U$-coaction on $U$ from \rmref{borromini} is given by
$$
U \to U_\ract \otimes_\ahha \due U \blact {}, \quad u \mapsto u_- \otimes_\ahha u_+,
$$
that is, the ``flipped'' translation map.
On the other hand,
for Hopf {\em algebras} the construction in Theorem \ref{U-comod <-> comod-U} is exactly the equivalence induced by the antipode.
\end{remark}

\subsection{Constructing maps between the duals}
\label{wiley}

We now want to construct a map between the right and the left dual of a left Hopf algebroid. To this end, we first need to recall from
 \cite[Theorem 3.1.11]{Kow:HAATCT}
the following bialgebroid generalisation of the classical bialgebra module-comodule correspondence, which, however, in its first part comes somewhat unexpected at first sight:

\begin{prop}
\label{KatCM1}
Let $(U,A)$ be a left bialgebroid.
\begin{enumerate}
\compactlist{99}
\item
There exists a functor  $  \comodu \to \mathbf{Mod}\mbox{-}{U_{\scalastd}}  $;  namely, if $M$ is a right $U$-comodule with coaction
$m \mapsto m_{(0)} \otimes_\ahha m_{(1)}  $, then
\begin{equation}
\label{sale}
M \otimes_k U_\scalast \to M  , \quad m \otimes_k \psi \mapsto m_{(0)}\psi(m_{(1)})  ,
\end{equation}
defines a right module structure over the $\Ae$-ring $U_\scalast$.
If $\due U \lact {}$ is finitely generated $A$-projective
(so that  $ U_\scalast $  is a right bialgebroid),
 this
 functor is monoidal and has a quasi-inverse $\mathbf{Mod}\mbox{-}{U_{\scalastd}} \to \comodu$ such that there is an
 equivalence
 $
 \comodu \simeq \mathbf{Mod}\mbox{-}{U_{\scalastd}}
 $
 of categories.
\item
Likewise, there exists a functor  $  \ucomod \to \mathbf{Mod}\mbox{-}{U^{\scalast}}  $;  namely, if $N$ is a left $U$-comodule with coaction
$n \mapsto n_{(-1)} \otimes_\ahha n_{(0)}  $, then
\begin{equation}
\label{pepe}
N \otimes_k U^\scalast \to N  , \quad n \otimes_k \phi \mapsto \phi(n_{(-1)})n_{(0)}  ,
\end{equation}
defines a right module structure over the $\Ae$-ring $U^\scalast$.
If $\due U {}\ract$ is finitely generated
 $A$-projective
(so that  $ U^\scalast $  is a right bialgebroid),
 this functor is monoidal and has a quasi-inverse $\mathbf{Mod}\mbox{-}{U^{\scalast}} \to \ucomod$ such that there
 is an equivalence
 $
 \ucomod \simeq \mathbf{Mod}\mbox{-}{U^{\scalast}}
 $
 of categories.
\end{enumerate}
\end{prop}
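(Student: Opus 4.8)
The plan is to carry out part (i) in full and to obtain part (ii) by the symmetric argument, exchanging the roles of the two $A$-actions (replacing $\lact,\ract$ by $\blact,\bract$) and of the left dual $U_{\scalastd}$ by the right dual $U^{\scalast}$ throughout.

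For the construction of the functor in (i) I would first check that $m \otimes_k \psi \mapsto m_{(0)}\psi(m_{(1)})$ is well defined. The right coaction corestricts to the Takeuchi product $M \times_\ahha {}_\lact U$ by \rmref{Takeuchicoaction}, and $\psi \in U_{\scalastd} = \Hom_\ahha(\due U \lact {}, A)$ is left $A$-linear for $\lact$; hence moving a scalar $a \in A$ across $\otimes_\ahha$ — which replaces $m_{(0)} \ract a \otimes_\ahha m_{(1)}$ by $m_{(0)} \otimes_\ahha a \lact m_{(1)}$ — produces the same value $m_{(0)} \ract \psi(m_{(1)})$ on both sides, using the right $A$-module axiom on $M$. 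Unitality $m \cdot \epsilon = m$ is the counitality of the coaction, since the unit of the $\Ae$-ring $U_{\scalastd}$ is $\epsilon$. For the associativity $(m \cdot \psi)\cdot\psi' = m\cdot(\psi\star\psi')$ I would expand the left-hand side by taking the coaction of $m_{(0)}\ract\psi(m_{(1)})$: right $A$-linearity of the coaction pulls the scalar $\psi(m_{(1)})$ onto the $U$-leg (as $t^\ell$), and coassociativity then rewrites everything through the iterated coaction $m_{(0)} \otimes_\ahha m_{(1)} \otimes_\ahha m_{(2)}$. Comparing with $m \cdot (\psi\star\psi')$ shows that the two agree precisely when $\star$ is the twisted convolution transpose to $\Delta_\ell$, which is exactly the product defining the $\Ae$-ring $U_{\scalastd}$ of \cite{KadSzl:BAODTEAD}. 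This is where the handedness ``comes unexpectedly'': it is the left dual, not the right dual, that acts on right comodules, the handedness being dictated by which of the two $A$-actions the evaluation $\psi(m_{(1)})$ is linear for. Functoriality is then immediate, since for a comodule morphism $f$ one has $f(m_{(0)}) \otimes_\ahha m_{(1)} = f(m)_{(0)} \otimes_\ahha f(m)_{(1)}$, whence $f(m\cdot\psi) = f(m)\cdot\psi$.

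For the equivalence under the finiteness hypothesis I would use that $\due U \lact {}$ finitely generated $A$-projective supplies a finite dual basis $\{e_i\} \subset U$, $\{e^i\} \subset U_{\scalastd}$ with $u = \sum_i s^\ell(e^i(u))\,e_i$, together with its transpose $\psi(u) = \sum_i e^i(u)\,\psi(e_i)$. The quasi-inverse $\mathbf{Mod}\mbox{-}U_{\scalastd} \to \comodu$ sends a right $U_{\scalastd}$-module $X$ to $X$ with coaction $x \mapsto \sum_i (x \cdot e^i) \otimes_\ahha e_i$; feeding the two dual-basis identities into the module axioms shows that this lands in the Takeuchi product and is coassociative and counital, and that the two functors are mutually quasi-inverse. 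Finally, monoidality follows by comparing the codiagonal coaction on a tensor product of comodules (with the flip in the $U$-legs, as in the proof of Theorem \ref{U-comod <-> comod-U}) with the tensor product of $U_{\scalastd}$-modules, whose structure is governed by the coproduct of the right bialgebroid $U_{\scalastd}$; since that coproduct is dual to the product of $U$, the two monoidal structures are carried into one another, the unit comodule $A$ (coaction $t^\ell$) corresponding to the base module. I expect the main obstacle to be bookkeeping rather than a single deep step: keeping the two $A$-actions, the Takeuchi balancing, and the transpose product/coproduct consistent through the associativity check and the dual-basis reconstruction — precisely the handedness that makes the left dual, and not the right one, act on right comodules.
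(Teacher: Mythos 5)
Your proposal is correct in outline, but there is nothing in the paper to compare it against line by line: the paper does not prove Proposition \ref{KatCM1} at all, it recalls the statement from \cite[Theorem~3.1.11]{Kow:HAATCT} (part \textit{(ii)} also from \cite[\S 5]{Schau:DADOQGHA}) and merely quotes, further on, the two pieces of that proof it actually needs --- the product formula of $U_{\scalastd}$ (\cite[Eqs.~(3.1.1)--(3.1.2)]{Kow:HAATCT}, used inside the proof of Theorem \ref{S*-morphism}) and the dual-basis coaction \rmref{ferra}. What you write out is precisely the argument living in that reference: well-definedness of \rmref{sale} from left $A$-linearity of $\psi$, unitality from counitality (the unit of $U_{\scalastd}$ being $\epsilon$), associativity from the identity $\langle\psi\psi',u\rangle=\langle\psi',t^\ell(\langle u_{(2)},\psi\rangle)\,u_{(1)}\rangle$, the quasi-inverse $x\mapsto\sum_i xe^i\otimes_\ahha e_i$ (which is exactly \rmref{ferra}), and monoidality from the duality of the product of $U$ against the coproduct of the right bialgebroid $U_{\scalastd}$. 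So your route and the paper's source coincide; yours simply makes the citation self-contained.

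Two bookkeeping corrections are worth making. First, for well-definedness of \rmref{sale} you only need the balancing relation of $\otimes_\ahha$ in $M\otimes_\ahha\due U \lact {}$, namely $m\ract a\otimes_\ahha u=m\otimes_\ahha a\lact u$, which is what your parenthetical actually describes --- not the Takeuchi condition \rmref{Takeuchicoaction}, which you name. The Takeuchi condition (equivalently, the induced left $A$-action \rmref{Inducedaction}) enters later, when you must check that the dual-basis coaction on a $U_{\scalastd}$-module corestricts to the Takeuchi product $M\times_\ahha\due U \lact {}$. Second, in the associativity check the order of factors is essential: the functional $\psi$ applied first must pair with the outer leg $m_{(2)}$, while $\psi'$ pairs with the $t^\ell$-twisted inner leg $m_{(1)}$; this flip is exactly the ``handedness'' that makes the \emph{left} dual act on \emph{right} comodules, so it deserves to be displayed explicitly rather than absorbed into bookkeeping.
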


The case $(ii)$ of the above Proposition \ref{KatCM1} can also be found in \cite[\S5]{Schau:DADOQGHA}.
An explicit proof and a description of all involved functors is given in \cite[\S3.1]{Kow:HAATCT}, along with the respective structure maps of the right bialgebroids $(U_{\scalastd}, A, s_{\scalastd}^r, t_{\scalastd}^r, \gD_{\scalastd}^r, \pl_{\scalastd})$ and $(U^{\scalast}, A, s_r^\scalast, t_r^\scalast, \gD^{\scalast}_r, \pl^{\scalast})$, in case the respective mentioned finiteness assumptions are met.
Observe that when $(U,A)$ is both a left and a right Hopf algebroid and both $U_\ract$ as well as $\due U \lact {}$ are finitely generated projective over $A$, then \rmref{eastpak} here
below
 is a commutative diagram of monoidal equivalences.

We shall also need an explicit expression of the induced coaction on $M \in \mathbf{Mod}\mbox{-}{U_{\scalastd}}$ in case $\due U \lact {}$ is finitely generated projective as in {\em (i)}: let $m \otimes_k \psi \mapsto m\psi$ denote the right $U_{\scalastd}$-action on $M$ and
$\{e_i\}_{1 \leq i \leq n} \in U, \ \{ e^i\}_{1 \leq i \leq n} \in U_{\scalastd}$ a dual basis (see, for example, \cite[p.~202]{AndFul:RACOM} for the notion of dual basis of a finitely generated projective module).
Then the resulting right $U$-coaction on $M$ can be expressed as
\begin{equation}
\label{ferra}
 m \mapsto \sum_i m e^i \otimes_\ahha e_i,
\end{equation}
see \cite[Eq.~(3.1.23)]{Kow:HAATCT}.
Consider now the diagram
\begin{equation}
\label{eastpak}
\begin{gathered}
\xymatrix{ \comodu \ar@{->}[r]  \ar@{->}[d] &  \mathbf{Mod}\mbox{-}U_{\scalast} \ar@{..>}[d] \\ \ucomod \ar@{->}[r] 
& \mathbf{Mod}\mbox{-}U^{\scalast}
}
\end{gathered}
\end{equation}
of categories, where the left vertical arrow is that from Theorem \ref{U-comod <-> comod-U} {\em (i)}.
Under the finiteness assumption for $\due U \lact {}$, the upper horizontal arrow is invertible.
One therefore 
obtains a functor that corresponds to the dotted arrow if $ U_\ract{}$ is $A$-projective and
$\due U \lact {}$ is finitely generated $A$-projective.
Explicitly, by using \rmref{ferra}, \rmref{borromini}, and \rmref{pepe} one obtains on a right $U_{\scalast}$-module $M$
with $U_{\scalast}$-action $m \otimes_k \psi \mapsto m\psi$ the following right $U^{\scalast}$-action:
\begin{equation}
\label{dieda}
M \otimes_k U^{\scalast} \to M, \quad m \mapsto m \Yleft \phi := \phi({e_i}_-) m e^i \epsilon({e_i}_+) = me^i \epsilon\big({e_i}_+ s^\ell(\phi({e_i}_-))\big),
\end{equation}
where the second expression follows by taking the Takeuchi property \rmref{Takeuchicoaction} of the right coaction \rmref{ferra} into account, along with \rmref{Sch9}.

Consider now the case $M = U_{\scalast}$ as right module over itself by right multiplication;
then as in \rmref{dieda} it also carries a right  $ U^\scalast $-action,
which is equivariant with respect to the regular left  $ U_{\scalast} $-action, that is
\begin{equation}
\label{equivariance}
 (\psi' \psi'') \Yleft \phi = \psi' ( \psi'' \Yleft \phi).
\end{equation}
In particular, this implies $  \psi \Yleft \phi = \psi ( 1_{U_{\scalast}} \Yleft \phi )$, which leads us to consider
\begin{equation}
\label{1st-def_Sstarup}
  S^\scalast\phi := 1_{U_{\scalast}} \Yleft \phi = \epsilon \Yleft \phi.
\end{equation}
 With \rmref{dieda}, we see that
$
S^\scalast\phi = \epsilon \Yleft \phi =  e^i s^r_\scalast\big(\epsilon\big({e_i}_+ s^\ell(\phi({e_i}_-))\big)\big).
$
Hence, for any $u \in U$,
\begin{equation}
\label{sososo}
\begin{split}
S^\scalast\phi(u) &= \langle \epsilon \Yleft \phi, u\rangle = \big\langle e^i s^r_\scalast\big(\epsilon\big({e_i}_+ s^\ell(\langle \phi, {e_i}_- \rangle)\big)\big), u \big\rangle \\
& = \langle e^i, u \rangle \langle \epsilon, {e_i}_+ s^\ell(\langle \phi, {e_i}_-
\rangle) \rangle =\langle \epsilon, s^\ell( \langle e^i, u \rangle ) {e_i}_+ t^\ell(\langle \phi, {e_i}_-
\rangle) \rangle,
\end{split}
\end{equation}
where we used \cite[Eq.~(3.1.3)]{Kow:HAATCT} in the third step and \rmref{castelnuovo} in the fourth.
Inserting now into \rmref{sososo} the identity
$$
u_+ \otimes_\Aopp u_- = s^\ell( \langle e^i, u \rangle ) {e_i}_+ \otimes_\Aopp {e_i}_-,
$$
which is seen by applying the bijective Hopf-Galois map $\alpha_\ell$ from \rmref{latoconvalida} to both sides (as we assumed $U$ to be a left Hopf algebroid), one further obtains
\begin{equation}
\label{S*-prov}
S^\scalast\phi(u) =\langle \epsilon, s^\ell( \langle e^i, u \rangle ) {e_i}_+ t^\ell(\langle \phi, {e_i}_- \rangle) = \epsilon\big(u_+ t^\ell(\phi(u_-))\big).
\end{equation}

As will be discussed at length in the next section, this yields a map $S^\scalast: U^{\scalast} \to U_{\scalast}$ (as is seen using \rmref{Sch9} and \rmref{castelnuovo}) of $\Ae$-rings that even makes sense
without any projectiveness or finiteness assumptions.

By means of \rmref{sale} and \rmref{S*-prov}, the action \rmref{dieda} can then be written as
\begin{equation}
\label{diqua}
m \Yleft \phi := m S^\scalast(\phi),
\end{equation}
which, without assuming any finiteness conditions on $U$, still leads to a functor $\mathbf{Mod}\mbox{-}U_{\scalast} \to  \mathbf{Mod}\mbox{-}U^{\scalast}$ between the categories of modules over $\Ae$-rings.

If instead $ U $  is a {\em right\/}  Hopf algebroid,
where $U_\ract$ is
finitely generated $A$-projective and
$\due U \lact {}$ is $A$-projective,
one obtains by analogous steps a map $S_\scalast : U_{\scalast} \to U^{\scalast}  $ given by
  $$
S_\scalast\psi(u) = \epsilon\big(u_{[+]} s^\ell(\psi(u_{[-]}))\big)
$$
for any $u \in U$, to which analogous comments apply as above.

We will discuss the properties of these maps in detail in the subsequent \S\ref{linking-structures}

\section{Linking structure for the duals of left Hopf algebroids}
 \label{linking-structures}

In this section --- the core of the present work ---, we find that the map $S^\scalast$ constructed in the previous section is linking the right dual to the left dual of a left Hopf algebroid, which is apparently as close as one can get to an explicit formula of an antipode kind-of structure on the dual. Note, however, that even in the case of a full Hopf algebroid this map is not simply the transpose of the antipode, as discussed in \S\ref{napoleon}.
In some sense, this special map amounts to sort of a generalisation of (the antipode in) a full Hopf algebroid as explained in Remark \ref{schonschoen}.

As mentioned before, the definition of the map  $ S^\scalast $  (and  $ S_\scalast $)  actually makes sense even without any
 finiteness or projectiveness  assumptions.  Indeed, one can trace their first appearance already in  \cite{KowPos:TCTOHA} in the r\^ole of the antipode in the example of the bialgebroid of jet spaces.

In what follows, we will prove the fact that $S^\scalast$ and $S_\scalast$ are morphisms of $\Ae$-rings in a direct way, whereas the fact that under suitable finiteness assumptions they are bialgebroid morphisms is shown by using the comodule equivalence discussed
 in the previous section (note, however, that even the latter can be achieved by direct computation).

In particular, since the finiteness assumptions are not needed for all properties stated below, we will be able to apply  $ S^\scalast $  and  $ S_\scalast $  in greater generality to the examples in \S\ref{lautesGequatsche}.

\subsection{Morphisms between left and right duals}
\label{linking}

Let  $(U,A) $  be a  left bialgebroid.  If it is additionally a left Hopf algebroid,
its right dual $ U^{\scalast} $ (see \S\ref{mayitbe}) carries a  left $U$-module structure as in  \rmref{gianduiotto1};
(re-)define
\begin{equation}
\label{sstarup}
  S^{\scalast}(\phi)(u)   :=   (u\phi)(1_\uhhu)   =   \epsilon_\uhhu \big( u_+  t^\ell(\phi(u_-)) \big),   \qquad  \forall  \phi \in U^{\scalast}  ,  \ u \in U.
\end{equation}
Likewise, if the left bialgebroid $(U,A) $  is a right  Hopf algebroid instead, its left dual  $ U_{\scalastd} $ (see \S\ref{mayitbe} again) carries a left $U$-module structure as in \rmref{gianduiotto2}, with the help of which one (re-)defines
\begin{equation}
\label{sstardown}
  S_{\scalastd}(\psi)(u)   :=   (u\psi)(1_\uhhu)   =   \epsilon_\uhhu \big( u_{[+]}  s^\ell( \psi(u_{[-]})) \big),   \qquad  \forall  \psi \in U_{\scalastd}  ,  \ u \in U.
\end{equation}

The following result presents the key properties of the maps  $ S^{\scalast} $  and  $ S_{\scalastd}  $:

\begin{theorem}
\label{S*-morphism}
Let $(U,A)$ be a left bialgebroid.
\begin{enumerate}
\compactlist{99}
\item
  If  $(U,A) $  is moreover a left Hopf algebroid, \rmref{sstarup}  defines a morphism
$  S^{\scalast} : U^{\scalast} \to U_{\scalastd}  $
of  $ A^e $-rings  with augmentation (the ``counit''); if in addition both  $ \due U \lact {} $  and  $ U_\ract $  are finitely generated projective as  $ A $-modules,  then  $ (S^{\scalast}, \id_\ahha) $  is a morphism of right bialgebroids.  In any case,  $ S^{\scalast} $  is also a morphism of left  $U$-modules  for the action  \rmref{lingotto1}  on  $ U^{\scalast} $  and the left action  on $ U_{\scalast} $ given by right multiplication in $U$.
\item
  If  $(U,A) $  is a right Hopf algebroid instead, \rmref{sstardown}  defines a morphism
$  S_{\scalastd} : U_{\scalastd} \to U^{\scalast}  $
of  $ A^e $-rings  with augmentation (the ``counit''); if in addition both  $ \due U \lact {} $  and  $ U_\ract $  are finitely generated projective as  $ A $-modules,  then  $ (S_{\scalast}, \id_\ahha) $  is a morphism of right bialgebroids.  In any case,  $ S_{\scalastd} $  is also a morphism of left  $ U $-modules  for the action  \rmref{lingotto2} on  $ U_{\scalastd} $  and the left action on  $ U^{\scalast} $ given by right multiplication in $U$.
\end{enumerate}
\end{theorem}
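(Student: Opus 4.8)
The plan is to prove part {\it (i)} in full and to obtain part {\it (ii)} by the evident left--right symmetry (replacing $\ga_\ell$ by $\ga_r$, the translation map $u_+\otimes_\Aopp u_-$ by $u_{[+]}\otimes^\ahha u_{[-]}$, and the identities \rmref{Sch1}--\rmref{Sch9} by their counterparts \rmref{Tch1}--\rmref{Tch9}). The first task is to check that \rmref{sstarup} is well defined, that is, that $S^{\scalast}(\phi)$ actually lies in $U_{\scalastd}=\Hom_\ahha(\due U \lact {},A)$. Well-definedness of $u \mapsto \epsilon_\uhhu\big(u_+\,t^\ell(\phi(u_-))\big)$ over the $\Aopp$-balanced tensor product $\due U \blact {}\otimes_\Aopp U_\ract$ carrying $u_+\otimes_\Aopp u_-$ follows from the right $A$-linearity of $\phi$ together with the fact that $t^\ell$ is an algebra anti-homomorphism; the left $A$-linearity of $S^{\scalast}(\phi)$, which is what places it in the \emph{left} dual, follows from \rmref{Sch9} applied to $s^\ell(a)u$ and the counit axiom \rmref{castelnuovo}.

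Next I would establish the $A^e$-ring statement, which holds without any finiteness hypothesis. Unitality $S^{\scalast}(\epsilon)=\epsilon$ is immediate from \rmref{Sch8}, since $u_+\,t^\ell(\epsilon(u_-)) = \epsilon(u_-)\blact u_+ = u$. Compatibility with the augmentations, $S^{\scalast}(\phi)(1_\uhhu)=\phi(1_\uhhu)$, is equally immediate, because $1_+\otimes_\Aopp 1_- = 1_\uhhu\otimes_\Aopp 1_\uhhu$ and $\epsilon_\uhhu\circ t^\ell=\id_\ahha$. Compatibility with the source and target maps of the two dual $A^e$-rings is obtained from \rmref{Sch9} and \rmref{castelnuovo} together with the explicit structure maps recalled from \cite{Kow:HAATCT}. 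The remaining and most delicate point is multiplicativity $S^{\scalast}(\phi\phi')=S^{\scalast}(\phi)S^{\scalast}(\phi')$: one must expand both convolution products (that of $U^{\scalast}$ and that of $U_{\scalastd}$) and reconcile them by transporting the coproduct of $u_-$ through the translation map. I expect this to be the main obstacle; the crucial inputs are the anti-multiplicativity \rmref{Sch6} of the translation map (which matches the order reversal between the two dual products), the coproduct--translation compatibilities \rmref{Sch4} and \rmref{Sch5}, and the counit relations \rmref{castelnuovo}.

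For the right bialgebroid statement under the finiteness hypotheses, rather than repeating a lengthy direct coproduct computation I would invoke the comodule equivalence. Assuming both $\due U \lact {}$ and $U_\ract$ finitely generated $A$-projective, Proposition \ref{KatCM1} provides monoidal equivalences $\comodu\simeq\mathbf{Mod}\mbox{-}U_{\scalastd}$ and $\ucomod\simeq\mathbf{Mod}\mbox{-}U^{\scalast}$, while Theorem \ref{U-comod <-> comod-U}\,{\it (i)} supplies the strict monoidal functor $\comodu\to\ucomod$ (using that $U_\ract$ is projective). Their composite is the dotted arrow of \rmref{eastpak}, and by \rmref{diqua} this functor is precisely restriction of scalars along the algebra map $S^{\scalast}$. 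Since on both sides the monoidal product is the one induced by the coproduct of the respective dual, monoidality of this restriction functor forces $S^{\scalast}$ to intertwine the coproducts $\gD^{\scalast}_r$ and $\gD^r_{\scalastd}$ (the counit compatibility having already been settled above); hence $(S^{\scalast},\id_\ahha)$ is a morphism of right bialgebroids.

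Finally, the left $U$-module morphism property is a one-line consequence of the defining formula \rmref{sstarup}, namely $S^{\scalast}(\phi)(u)=(u\phi)(1_\uhhu)$: the module-action axiom gives, for all $w,u\in U$,
$$
S^{\scalast}(w\phi)(u)=\big((uw)\phi\big)(1_\uhhu)=S^{\scalast}(\phi)(uw),
$$
which is exactly the image of $S^{\scalast}(\phi)$ under the action on $U_{\scalastd}$ by right multiplication in $U$. Part {\it (ii)} then follows \emph{mutatis mutandis} from the right Hopf structure, using the identities \rmref{Tch1}--\rmref{Tch9} in place of their $\ga_\ell$-counterparts; in particular \rmref{Tch8} replaces \rmref{Sch8} for unitality and \rmref{Tch9} replaces \rmref{Sch9} for well-definedness and for the source/target compatibility.
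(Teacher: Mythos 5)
Your proposal follows the same architecture as the paper's own proof: part \emph{(i)} in full with \emph{(ii)} obtained \emph{mutatis mutandis}; well-definedness and the unit/augmentation/source/target compatibilities checked directly from \rmref{Sch7}--\rmref{Sch9} and \rmref{castelnuovo}; the coproduct compatibility under the finiteness hypotheses deduced not by computation but from strict monoidality of the functors in \rmref{eastpak} together with \rmref{diqua} --- this is exactly the paper's ``quicker way'', which implements your monoidality argument by evaluating at the unit, $S^\scalast(\phi)^{(1)} \otimes_\ahha S^\scalast(\phi)^{(2)} = (\epsilon \otimes_\ahha \epsilon) \Yleft \phi = S^\scalast(\phi^{(1)}) \otimes_\ahha S^\scalast(\phi^{(2)})$; and the $U$-linearity in one line (the paper leaves that part to the reader).

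The genuine gap is multiplicativity, $S^{\scalast}(\phi\phi') = S^{\scalast}(\phi)S^{\scalast}(\phi')$, which you explicitly defer as ``the main obstacle'' after only naming ingredients. This identity is the technical core of the paper's proof, and it \emph{must} be done by hand, because without the finiteness hypotheses the categorical shortcut through \rmref{diqua} is unavailable (the paper notes this explicitly after its computation). Your ingredient list (\rmref{Sch4}, \rmref{Sch5}, \rmref{Sch6}, \rmref{castelnuovo}) is essentially correct, but ``expanding both convolution products and reconciling them'' does not by itself organize the calculation: the paper's device is to first establish the intermediate identity $\langle S^\scalast(\phi)\psi , u\rangle = \langle \psi, u_+ t^\ell(\langle \phi, u_-\rangle)\rangle$ for an \emph{arbitrary} $\psi \in U_{\scalastd}$ (using the explicit product formula of $U_{\scalastd}$ from \cite[Eq.~(3.1.1)]{Kow:HAATCT}, \rmref{Sch4}, and the counit axioms), and then to obtain multiplicativity by applying this identity twice --- once with $\psi = S^\scalast(\phi')$ --- combined with the product formula of $U^{\scalast}$, \rmref{Sch5}, and \rmref{Sch6}/\rmref{Sch9} to recognize the translation map of the element $u_+ t^\ell(\phi(u_-))$. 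Without some such linearizing step your plan stalls exactly where you predicted it would. A smaller point: your one-line equivariance argument tacitly uses the action \rmref{gianduiotto1} --- the action through which $S^\scalast$ is defined by $S^\scalast(\phi)(u) = (u\phi)(1_\uhhu)$ --- whereas the statement cites \rmref{lingotto1}; since \rmref{lingotto1} is defined on $\Hom$-spaces of \emph{right} $U$-modules and does not literally type-check on $U^{\scalast} = \Hom_{\Aopp}(U_\ract, A)$, your reading is the sensible one, but you should flag the substitution of actions rather than make it silently.
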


\begin{proof}
 We only prove part {\it (i)} as {\it (ii)} follows  {\em mutatis mutandis}.  For the explicit computations, we will again use the notation and description of the structure maps of the two right bialgebroids
$(U_{\scalastd}, A, s_{\scalastd}^r, t_{\scalastd}^r, \gD_{\scalastd}^r, \pl_{\scalastd})$ and $(U^{\scalast}, A, s_r^\scalast, t_r^\scalast, \gD^{\scalast}_r, \pl^{\scalast})$
  --- where the coproduct  $ \gD_{\scalastd}^r $  or  $ \gD^{\scalast}_r $ only make sense if $U_\ract$ resp.\ $\due U \lact {}$ is finitely generated $A$-projective --- as given in detail in \cite[\S3.1]{Kow:HAATCT},  together with the respective properties of left and right pairings $\langle .,. \rangle$ as in Definition \ref{left and right 1}.
Direct verification shows that $S^{\scalast}$ takes values in $U_{\scalastd}$. Besides,
for $S^{\scalast}$ to be a bialgebroid morphism, we need to show the following properties:
  $$  \displaylines{
   \qquad
\text{\it (a)}  \hfill   S^{\scalast} s^{\scalast}_r   =   s_{\scalastd}^r  ,
\quad  S^{\scalast} t_r^{\scalast}   =   t_{\scalastd}^r, \quad
\pl_{\scalastd}  S^{\scalast} = \pl^{\scalast},
\hfill  \cr
   \qquad
\text{\it (b)}  \hfill   S^{\scalast}\big(\phi  \phi'\big)   =   S^{\scalast} (\phi)  S^{\scalast} (\phi')   \hfill  \cr
   \qquad
\text{\it (c)}  \hfill   \gD^r_{\scalastd}  S^{\scalast}   =   (S^{\scalast} \otimes S^{\scalast})  \gD^{\scalast}_r  ,  \hfill  }
$$
(where, as said before, {\it (c)\/} only makes sense if $U_\ract$ and $\due U \lact {}$ are finitely generated $A$-projective).

As for  {\em (a)},  we find for $u \in U$, $a \in A$ by direct computation using
\rmref{Sch8} and \rmref{Sch9}:
\begin{equation*}
\begin{split}
S^{\scalast}\big(s_r^{\scalast}(a)\big)(u)   &=   \epsilon\big( u_+  t^\ell\big(s_r^{\scalast}(a)(u_-) \big)\big)
= \epsilon \big( u_+  t^\ell\big(\epsilon(u_-  s^\ell(a)) \big)\big)
= \epsilon(u)  a  =   s^r_{\scalastd}(a)(u).
\end{split}
\end{equation*}
Likewise, the second identity follows from
\begin{equation*}
\begin{split}
S^{\scalast}\big(t_r^{\scalast}(a)\big)(u)   &=  \epsilon\big( u_+  t^\ell\big(t_r^{\scalast}(a)(u_-) \big) \big)
=   \epsilon\big( u_+  t^\ell(a  \epsilon (u_-)) \big)
= \epsilon(ut^\ell(a))
=   t^r_{\scalastd}(a)(u).
\end{split}
\end{equation*}
The last identity in {\em (a)} regarding the respective counits is for $\phi \in U^{\scalast}$ proven by the line
$$
\pl_{\scalastd} S^{\scalast}(\phi) = S^{\scalast}(\phi)(1_\uhhu) = \phi(1_\uhhu) = \partial^{\scalast} \phi.
$$
As for  {\em (b)},  let us first more generally compute an element $S^\scalast(\phi )\psi$ for $\phi \in U^\scalast$ and $\psi \in U_\scalastd$: by  \cite[Eq.~(3.1.1)]{Kow:HAATCT}, Eq.~\rmref{Sch4}, and the properties of a bialgebroid counit, we have
\begin{equation*}
\begin{split}
\langle S^\scalast(\phi )\psi ,u \rangle &=
\langle \psi, t^\ell(\langle u_{(2)} , S^\scalast(\phi)\rangle)u_{(1)}\rangle
=\langle \psi, t^\ell(\langle \epsilon, u_{(2)+}t^\ell(\langle\phi, u_{(2)-}\rangle)\rangle)u_{(1)} \rangle \\
&= \langle \psi, t^\ell(\langle \epsilon, u_{+(2)}t^\ell(\langle\phi, u_{-}\rangle)\rangle)u_{+(1)}\rangle \\
&=\langle \psi, t^\ell(\langle \epsilon, u_{+(2)}s^\ell(\langle\phi, u_{-}\rangle)\rangle)u_{+(1)} \rangle \\
&= \langle \psi, t^\ell(\langle \epsilon, u_{+(2)}\rangle) u_{+(1)}t^\ell(\langle\phi, u_{-}\rangle)\rangle
=\langle\psi, u_{+}t^\ell(\langle\phi, u_{-}\rangle)\rangle.
\end{split}
\end{equation*}
With the help of this property, by  \cite[Eq.~(3.1.2)]{Kow:HAATCT}
along with  \rmref{Sch5}, \rmref{Sch9}, and the fact that the counit in $U$ gives the unit in $U_\scalastd$, one sees that for all  $  \phi, \phi' \in U^{\scalast}  $
 \begin{equation*}
 \begin{split}
 \langle S^{\scalast}( \phi\phi') , u \rangle
 &= \langle  \epsilon , u_+ t^\ell(\langle \phi \phi', u_- \rangle )\rangle
= \langle  \epsilon , u_+ t^\ell(\langle \phi', s^\ell\phi(u_{-(1)}) u_{-(2)} \rangle )\rangle \\
 &= \langle  \epsilon , u_{++} t^\ell(\langle \phi', s^\ell\phi(u_{-}) u_{+-} \rangle )\rangle \\
&= \langle  \epsilon , (u_{+} t^\ell\phi(u_-))_+ t^\ell(\langle \phi', (u_{+} t^\ell\phi(u_-))_-
 \rangle )\rangle \\
 &= \langle  S^\scalast(\phi') \epsilon , u_{+} t^\ell\phi(u_-)\rangle =  \langle  S^{\scalast}(\phi)S^{\scalast}( \phi' ), u \rangle.
 \end{split}
 \end{equation*}
Observe that if $\due U \lact {}$ is finitely generated $A$-projective, then {\em (b)} follows by the fact that \rmref{diqua} defines an action, but in general we do not want to assume this at this point.

For proving {\em (c)} --- when $U_\ract$ and $\due U \lact {}$ are finitely generated $A$-projective ---,
one could equally do this by a straightforward somewhat technical computation. A quicker way is to use the results
in \S\ref{fallouts}:
denoting the right coproduct on $U_{\scalastd}$ resp.\ $U^{\scalast}$ by Sweedler superscripts,
one has
\begin{equation*}
\begin{split}
S^\scalast(\phi)^{(1)} \otimes_\ahha S^\scalast(\phi)^{(2)}
&=
(\epsilon \otimes_\ahha \epsilon) S^\scalast(\phi)
=   (\epsilon \otimes_\ahha \epsilon) \Yleft \phi  \\
&= (\epsilon \Yleft \phi^{(1)}) \otimes_\ahha (\epsilon \Yleft \phi^{(2)})
=  S^\scalast(\phi^{(1)}) \otimes_\ahha S^\scalast(\phi^{(2)}),
\end{split}
\end{equation*}
where in the first equation we used the monoidal structure on  $\mathbf{Mod}\mbox{-}U_{\scalast}$, and in the third the fact that all functors in \rmref{eastpak} are strict monoidal.

   The second part in {\em (i)} --- about the  $ U $-linearity  of  $ S^{\scalast} $ ---,   which is straightforward, is left to the reader.
\end{proof}

\begin{remark}
\label{S* as "antipode"}
 When  $ U $  is just a Hopf algebra over  $  A = k  $  with antipode  $ S $,  we have  $  U^{\scalast} = (U_\scalastd)^\op_\coop  $, and $ S^{\scalast} $  is nothing but the transpose of $ S  $.  If  $ U^{\scalast}$  itself is in turn a Hopf algebra --- namely, if the transpose of the multiplication  $ m_\uhhu $  in  $ U $  takes values in the tensor square of  $ U^{\scalast} $   ---,   then  $ S^{\scalast} $  is just the antipode of this dual Hopf algebra  $ U^{\scalast}  $.  In this context,  Theorem \ref{S*-morphism} simply expresses the fact that the antipode in a Hopf algebra is an antimorphism of algebras and of coalgebras.
\end{remark}

In particular, in case $U$ is both a left and right Hopf algebroid we have:

\begin{theorem}
\label{criptabalbi}
  Let  $ (U,A) $  be simultaneously a left and a right Hopf algebroid.
Then the maps  $ S^{\scalast} $  and  $ S_{\scalastd} $  are inverse to each other. Hence, if both  $ A $-modules  $ \due U \lact {} $  and  $ U_\ract $  are, in addition, finitely generated projective, $ (S^{\scalast}, \id_\ahha ) $  and  $ (S_{\scalastd}, \id_\ahha) $  are isomorphisms of right bialgebroids which are inverse to each other.
\end{theorem}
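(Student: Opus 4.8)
The plan is to establish the heart of the statement — that $S^{\scalast}$ and $S_{\scalastd}$ are mutually inverse — by a direct computation from the defining formulas \rmref{sstarup} and \rmref{sstardown}, valid with no finiteness hypothesis, and then to read off the bialgebroid assertion from Theorem \ref{S*-morphism}. (One could alternatively derive the inverse property under finiteness from the quasi-inverse functors of Theorem \ref{U-comod <-> comod-U} together with Proposition \ref{KatCM1}, but since we want it in full generality I would argue by hand.) Concretely, I would verify the two identities $S_{\scalastd}\circ S^{\scalast}=\id_{U^{\scalast}}$ and $S^{\scalast}\circ S_{\scalastd}=\id_{U_{\scalastd}}$; the mixed relations \rmref{mampf2} and \rmref{mampf3} are exactly what couples the two translation maps occurring in each composite.

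For the first composite, fix $\phi\in U^{\scalast}$ and $u\in U$. Unwinding \rmref{sstarup} and \rmref{sstardown} gives
\[
S_{\scalastd}\big(S^{\scalast}(\phi)\big)(u)=\epsilon\big(u_{[+]}s^\ell(\epsilon(u_{[-]+}t^\ell(\phi(u_{[-]-}))))\big),
\]
where $u_{[-]+}\otimes_\Aopp u_{[-]-}$ is the left translation map applied to $u_{[-]}$. Thus \rmref{mampf3} applies and rewrites the triple $u_{[+]}\otimes u_{[-]+}\otimes u_{[-]-}$ as $u_{(2)[+]}\otimes u_{(2)[-]}\otimes u_{(1)}$ (after the routine check that this scalar expression descends to the triple Takeuchi product carrying \rmref{mampf3}, so that it may be evaluated there). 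Setting $b:=\phi(u_{(1)})$ and using the second identity in \rmref{castelnuovo} to turn $t^\ell(b)$ into $s^\ell(b)$ inside the inner counit, the right-hand side becomes $\epsilon\big(u_{(2)[+]}s^\ell(\epsilon(u_{(2)[-]}s^\ell(b)))\big)$. The Sweedler--Takeuchi property \rmref{Tch1} yields $u_{(2)[+]}\otimes^\ahha u_{(2)[-]}s^\ell(b)=s^\ell(b)u_{(2)[+]}\otimes^\ahha u_{(2)[-]}$, transporting $s^\ell(b)$ onto the $[+]$-leg; then \rmref{Tch8}, in the form $u_{(2)[+]}s^\ell(\epsilon(u_{(2)[-]}))=u_{(2)}$, collapses the translation part and leaves $\epsilon\big(s^\ell(\phi(u_{(1)}))u_{(2)}\big)=\phi(u_{(1)})\epsilon(u_{(2)})$ by left $A$-linearity of $\epsilon$. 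Finally the counit axiom $t^\ell(\epsilon(u_{(2)}))u_{(1)}=u$ and the right $A$-linearity $\phi(u\ract a)=\phi(u)\,a$ give $\phi(u_{(1)})\epsilon(u_{(2)})=\phi(u)$, whence $S_{\scalastd}\circ S^{\scalast}=\id_{U^{\scalast}}$.

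The reverse composite is entirely symmetric. For $\psi\in U_{\scalastd}$ one finds
\[
S^{\scalast}\big(S_{\scalastd}(\psi)\big)(u)=\epsilon\big(u_+t^\ell(\epsilon(u_{-[+]}s^\ell(\psi(u_{-[-]}))))\big),
\]
and now \rmref{mampf2} rewrites $u_+\otimes u_{-[+]}\otimes u_{-[-]}$ as $u_{(1)+}\otimes u_{(1)-}\otimes u_{(2)}$; converting $s^\ell(\psi(u_{(2)}))$ to $t^\ell(\psi(u_{(2)}))$ via \rmref{castelnuovo} and transporting it onto the $+$-leg by the Takeuchi property \rmref{Sch1}, one applies \rmref{Sch8} to reach $\epsilon\big(t^\ell(\psi(u_{(2)}))u_{(1)}\big)=\epsilon(u_{(1)})\psi(u_{(2)})$. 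The counit axiom $s^\ell(\epsilon(u_{(1)}))u_{(2)}=u$ together with the left $A$-linearity $\psi(a\lact u)=a\,\psi(u)$ then give $\epsilon(u_{(1)})\psi(u_{(2)})=\psi(u)$, so $S^{\scalast}\circ S_{\scalastd}=\id_{U_{\scalastd}}$.

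The step I expect to be the genuine obstacle is that over a noncommutative base $A$ the counit does \emph{not} satisfy $\epsilon(w\,s^\ell(b))=\epsilon(w)\,b$ — this already fails for $U=\Ae$ — so one may not pull the scalar $b=\phi(u_{(1)})$ out of the counit and contract with \rmref{Tch8}/\rmref{Sch8} directly. The decisive device is precisely to first move $s^\ell(b)$ (resp.\ $t^\ell(b)$) from the $[-]$- (resp.\ $-$-) leg onto the complementary leg using the Takeuchi relation \rmref{Tch1} (resp.\ \rmref{Sch1}), and only then to apply the ``$+\,-$-contraction'' identity. The remaining care point, which I would flag but treat as routine, is the well-definedness of the scalar functionals built from $\epsilon$, $s^\ell$, $t^\ell$ and $\phi$ (resp.\ $\psi$) on the triple Takeuchi products of \rmref{mampf2} and \rmref{mampf3}, which follows since $s^\ell$ and $t^\ell$ are algebra maps and $\epsilon$ satisfies \rmref{castelnuovo}. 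Granting the mutual inverse property, the final clause is immediate: by Theorem \ref{S*-morphism} each of $(S^{\scalast},\id_\ahha)$ and $(S_{\scalastd},\id_\ahha)$ is a morphism of right bialgebroids once $\due U \lact {}$ and $U_\ract$ are finitely generated projective over $A$, and two mutually inverse bijections that are both bialgebroid morphisms are inverse isomorphisms of right bialgebroids.
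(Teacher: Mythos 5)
Your proposal is correct and takes essentially the same route as the paper: the mutual-inverse property is proved by direct computation from \rmref{sstarup} and \rmref{sstardown}, using the mixed relation \rmref{mampf3} for $S_{\scalastd}\circ S^{\scalast}$ (resp.\ \rmref{mampf2} for the symmetric composite, which the paper dispatches with ``likewise'') together with the bialgebroid counit axioms, and the final right-bialgebroid isomorphism claim is read off from Theorem \ref{S*-morphism} exactly as you do. The only cosmetic difference is in the bookkeeping of the last contraction --- the paper absorbs the inner counit via \rmref{castelnuovo} and then collapses the translation legs with \rmref{Tch7}, whereas you transport the scalar across the Takeuchi product via \rmref{Tch1}/\rmref{Sch1} and contract with \rmref{Tch8}/\rmref{Sch8} --- which amounts to the same computation.
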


\begin{proof}
 As for the first statement, we directly compute  by means of the bialgebroid axioms along with  \rmref{mampf3}  and  \rmref{Tch7}, for any  $  \phi \in U^{\scalast}  $:
 \begin{equation*}
 \begin{split}
 (S_{\scalastd} S^{\scalast} \phi)(u) &= \epsilon\big(u_{[+]} s^\ell(S^{\scalast} \phi(u_{[-]}))\big)
= \epsilon\big(u_{[+]} s^\ell\big(\epsilon_U(u_{[-]+} t^\ell \phi(u_{[-]-}))\big)\big) \\
 &= \epsilon\big(u_{[+]}u_{[-]+} t^\ell \phi(u_{[-]-})\big)
=  \epsilon\big(u_{(2)[+]}u_{(2)[-]}t^\ell \phi(u_{(1)})\big) \\
 &= \phi(u_{(1)})\epsilon(u_{(2)}) = \phi(u),
 \end{split}
 \end{equation*}
which proves that  $  S_{\scalastd} \circ S^{\scalast} = \id_{\uhhu^{\scalast}}  $.   Likewise, one shows that  $  S^{\scalast} \circ S_{\scalastd} = \id_{\uhhu_{\scalastd}}  $.
%
%
\end{proof}

\subsection{The case of a full Hopf algebroid}
\label{napoleon}

If  $ H $  is a full Hopf algebroid with bijective antipode $S$ in the sense of \cite{BoeSzl:HAWBAAIAD}, then it is, in particular, both a left and right bialgebroid (see  the short summary below): therefore --- still assuming that  $ \due H \lact {} $  and  $ H_\ract $  are both finitely generated projective as  $ A $-modules ---, there is a right bialgebroid analogue to the previous constructions concerning the maps  $ S^{\scalast} $  and  $ S_{\scalastd} $.  On the other hand, the antipode $ S $  induces by transposition new maps  $ S^t $,  $\qttrd S t {}{}{}{} $,  etc., for the dual spaces.  Hereafter we discuss links between these various maps, in particular showing that, while for the Hopf algebra case one has identities like  $  S^{\scalast} = \qttrd S t {}{}{}  $  ({\em cf.}~Remark \ref{S* as "antipode"}),  this is no longer the case for the general setup of full Hopf algebroids as illustrated in \S\ref{enoteca}  below.

\begin{free text}{\bf Reminder on full Hopf algebroids.}
\label{reminders_H-ads}
 Recall that a full Hopf algebroid structure (see, for example, \cite{Boe:HA}) on a  $ k $-module  $ H $
consists of the following data:
\begin{enumerate}
\compactlist{99}
\item
a left bialgebroid structure  $  H^\ell := ( H, A, s^\ell, t^\ell, \Delta_\ell  , \epsilon)  $
over a  $ k $-algebra  $ A  $;
\item
 a right bialgebroid structure  $  H^r := ( H, B, s^r, t^r, \Delta_r  , \partial)  $ over a  $ k $-algebra  $ B $;
\item
the assumption that the  $ k $-algebra  structures for  $ H $  in  {\em (i)\/}  and in  {\em (ii)\/} be the same;
\item
a $ k $-module  map  $  S : H \to H  $;
\item
some compatibility relations between the previously listed data for which we refer to {\em op.\ cit.}
\end{enumerate}
We shall denote by lower Sweedler indices the left coproduct  $ \Delta_\ell $  and by upper indices the right coproduct  $ \Delta_r  $, that is,
$  \Delta_\ell(h) =: h_{(1)} \otimes_\ahha h_{(2)}  $  and  $  \Delta_r(h) =: h^{(1)} \otimes_\behhe h^{(2)}  $
 for any  $ h \in H  $.
As said before, a full Hopf algebroid (with bijective antipode) is both a left and right Hopf algebroid but not necessarily vice versa (as illustrated in \S\ref{enoteca}).
In this case, the translation maps in \rmref{latoconvalida} are given by
\begin{equation}
\label{laterza}
h_+ \otimes_\Aopp h_- = h^{(1)} \otimes_\Aopp S(h^{(2)}) \quad \mbox{and} \quad h_{[+]} \otimes_\Bopp h_{[-]} = h^{(2)} \otimes_\Bopp S^{-1}(h^{(1)}),
\end{equation}
formally similar as for Hopf algebras.
\end{free text}
The following lemma \cite{Boe:HA, BoeSzl:HAWBAAIAD} will be needed to prove the main result in this subsection.

\begin{lemma}
\label{vitasnella}
  Let  $ H $  be any Hopf algebroid.  Then
 \begin{enumerate}
 \compactlist{99}
\item

the maps $ \nu  :=  \partial s^\ell : A \to B^\op  $  and  $  \mu  :=  \epsilon s^r : B \to A^\op  $  are isomorphisms of  $ k $-algebras;
 \item
 the pair of maps $(S, \nu) : H^\ell \to {(H^r)}^\op_\coop  $  gives an isomorphism of left bialgebroids;
 \item
 the pair of maps
  $ (S, \mu) : H^r \to {(H^\ell)}^\op_\coop  $  gives an isomorphism of right bialgebroids.
 \end{enumerate}
 \end{lemma}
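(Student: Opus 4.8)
The plan is to unwind the Böhm–Szlachányi definition recalled in \S\ref{reminders_H-ads} and check the three assertions directly against the compatibility and antipode axioms; since all three are purely structural, the only genuine work is matching up structure maps and then invoking bijectivity. I would dispose of part (i) first. Recall the compatibility relations of \S\ref{reminders_H-ads}, namely $t^r\partial s^\ell = s^\ell$, $s^\ell \epsilon t^r = t^r$ and $s^r \partial t^\ell = t^\ell$, $t^\ell \epsilon s^r = s^r$; the first pair forces $s^\ell(A) = t^r(B)$ and the second $t^\ell(A) = s^r(B)$ as (commuting) subalgebras of $H$. Because $\epsilon s^\ell = \id_A$ and $\partial t^r = \id_B$, both $s^\ell$ and $t^r$ are injective, and $\partial$ restricted to $t^r(B) = s^\ell(A)$ is simply $(t^r)^{-1}$. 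Hence $\nu = \partial s^\ell = (t^r)^{-1} s^\ell$ is a bijection $A \to B$; as $s^\ell$ is an algebra homomorphism while $t^r$ (and thus $(t^r)^{-1}$) is an algebra anti-homomorphism, $\nu$ is an anti-homomorphism, that is, an isomorphism of $k$-algebras $A \to \Bop$. The identical argument with the roles of the two structures swapped --- using $\partial s^r = \id_B$, $\epsilon t^\ell = \id_A$ and $s^r(B) = t^\ell(A)$ --- shows $\mu = \epsilon s^r = (t^\ell)^{-1} s^r$ is an isomorphism $B \to \Aop$, which settles (i).

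For part (ii) I would use that the antipode $S$ is, in the present bijective-antipode setting, an anti-automorphism of the total algebra $H$, hence an algebra isomorphism $H \to H^\op$; this provides the algebra component of a map into $(H^r)^\op_\coop$, whose base ring $\Bop$ is exactly the target of $\nu$. It then remains to verify that the pair $(S, \nu)$ respects source, target, counit and coproduct. The source/target and counit compatibilities follow from the antipode--source/target axioms together with part (i): $S$ interchanges the commuting subalgebras $s^\ell(A), t^\ell(A)$ with $t^r(B), s^r(B)$ in the manner prescribed by $\nu$, and $\partial S = \nu\epsilon$. The coproduct compatibility is the only computation of substance: invoking the coproduct-matching axiom relating $\Delta_\ell$ and $\Delta_r$ together with the explicit antipode identities recorded in \rmref{laterza} and \rmref{Sch7}, one checks that $S$ carries $\Delta_\ell$ to the coopposite of $\Delta_r$ read inside $H^\op$, which is precisely the coproduct of $(H^r)^\op_\coop$. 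Since $S$ is bijective and $\nu$ is an isomorphism by (i), the resulting morphism $(S,\nu)$ of left bialgebroids is automatically an isomorphism.

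Part (iii) should then follow \emph{mutatis mutandis}. The axioms of a Hopf algebroid are symmetric under interchanging the left and the right bialgebroid structure and simultaneously replacing $S$ by $S^{-1}$ --- the very symmetry that sends $\nu$ to $\mu$ --- so transporting the argument for (ii) through it yields that $(S, \mu): H^r \to (H^\ell)^\op_\coop$ is an isomorphism of right bialgebroids (now using \rmref{Tch7} in place of \rmref{Sch7}). The main obstacle in all of this is bookkeeping rather than mathematics: one must first pin down the explicit source, target, counit and coproduct of the doubly twisted bialgebroids $(H^r)^\op_\coop$ and $(H^\ell)^\op_\coop$, and then stay consistent about Sweedler conventions and about which side of each tensor factor carries which action, so that the antipode identities can be applied with the correct variances. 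Once these conventions are fixed, every individual check reduces to a short direct calculation, and the result is exactly the one recorded in \cite{Boe:HA, BoeSzl:HAWBAAIAD}.
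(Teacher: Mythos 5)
First, a remark on the comparison itself: the paper does \emph{not} prove Lemma \ref{vitasnella} --- it is recalled verbatim from \cite{Boe:HA, BoeSzl:HAWBAAIAD} --- so your argument can only be checked against the B\"ohm--Szlach\'anyi axioms, not against an in-paper proof. On that basis, your part (i) is correct and complete: the compatibility relations $t^r\partial s^\ell=s^\ell$, $s^\ell\epsilon t^r=t^r$, $s^r\partial t^\ell=t^\ell$, $t^\ell\epsilon s^r=s^r$ do give $s^\ell(A)=t^r(B)$ and $t^\ell(A)=s^r(B)$, and the factorisations $\nu=(t^r)^{-1}s^\ell$, $\mu=(t^\ell)^{-1}s^r$ prove the claim. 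Part (ii) is an acceptable outline, with the caveat that anti-multiplicativity and anti-comultiplicativity of $S$ are \emph{theorems} in the B\"ohm--Szlach\'anyi framework (not axioms), so ``the only computation of substance'' is really several computations; still, granting those standard facts, the identities $S(1)=1$, $St^\ell=s^\ell$, $St^r=s^r$, hence $Ss^\ell=St^r\nu=s^r\nu$ and $St^\ell=s^\ell=t^r\nu$, together with $\partial S=\nu\epsilon$ and the anti-coring property, do assemble into a morphism $(S,\nu)$, and bijectivity of $S$ makes it an isomorphism.

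The genuine gap is in part (iii). The symmetry you invoke --- ``interchange the left and the right bialgebroid structure and replace $S$ by $S^{-1}$'' --- is not even type-correct: $H^r$ is a \emph{right} bialgebroid and cannot serve as the left component of a Hopf algebroid datum unless you also pass to the opposite algebra. The closest genuine symmetry is $H\mapsto\big({(H^r)}^\op,\,{(H^\ell)}^\op,\,S^{-1}\big)$, whose total algebra is $H^\op$ (checking the axiom $S(t^\ell(a)ht^r(b))=s^r(b)S(h)s^\ell(a)$ on the swapped data shows both the op-twist and the passage to $S^{-1}$ are forced). But transporting your part (ii) through \emph{this} symmetry does not yield (iii): the ``$\nu$'' of the swapped Hopf algebroid is $\epsilon\circ t^r$ (counit of ${(H^\ell)}^\op$ composed with the source $t^r$ of ${(H^r)}^\op$), so one obtains an isomorphism $(S^{-1},\,\epsilon\, t^r):H^r\to{(H^\ell)}^\op_\coop$ --- with $S^{-1}$, not $S$, and $\epsilon\, t^r$, not $\mu=\epsilon\, s^r$. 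In particular this symmetry does \emph{not} ``send $\nu$ to $\mu$''. The symmetry that fixes $S$ and does send $\nu$ to $\mu$ is $H\mapsto\big({(H^r)}^\op_\coop,\,{(H^\ell)}^\op_\coop,\,S\big)$: applying your part (ii) to it gives $(S,\mu):{(H^r)}^\op_\coop\to H^\ell$ as left bialgebroids, and taking op-coop of this morphism gives exactly (iii). Alternatively, prove (iii) by a second direct verification, using $Ss^r=St^\ell\mu=s^\ell\mu$, $St^r=s^r=t^\ell\mu$, $\epsilon S=\mu\partial$, and $\Delta_\ell^\coop\circ S=(S\otimes S)\circ\Delta_r$; note these are the \emph{same} antipode identities as in (ii), not their naive left-right mirrors, so ``\emph{mutatis mutandis}'' must mean a fresh computation rather than a transport through the symmetry you state.
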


The next observation might let us consider  $ S^{\scalast} $  and  $ S_{\scalastd} $  as sort of an analogue of the antipode on the dual:

\begin{proposition}  \label{dual-cocomm_Hopf}
 Let  $ (U, A)  $  be a cocommutative left bialgebroid (in particular, $A$ is commutative and $s^\ell = t^\ell$).
Then  $(U,A) $  is a  left Hopf  algebroid if and only if it is a right Hopf algebroid; in this case, assuming in addition that  $ \due U \lact {} $  and  $ U_\ract $  are finitely generated $A$-projective,
$  (U^{\scalast},A) = ((U_{\scalastd})_\coop,A)  $  is a full Hopf algebroid with involutive  antipode  $  \mathscr{S} := S^{\scalast} =  S_{\scalastd}  $.
\end{proposition}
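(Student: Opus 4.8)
The plan is to treat the three assertions separately: the equivalence of the left and right Hopf conditions, the identification of the two duals together with involutivity of the putative antipode, and finally the assembly of the full Hopf algebroid structure.

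First I would establish that, for a cocommutative $(U,A)$, the Hopf-Galois maps $\ga_\ell$ and $\ga_r$ have literally the same source and the same target and agree up to the tensor flip. Indeed, since $s^\ell = t^\ell$ and $A$ is commutative, all four $A$-actions collapse to left, resp.\ right, multiplication by the source map, so that $\due U \blact {} \otimes_{\Aopp} U_\ract$ and $U_\bract \otimes^\ahha \due U \lact {}$ coincide as $A$-modules, as do the common targets $U_\ract \otimes_\ahha \due U \lact {}$. Writing $\tau$ for the (well-defined, involutive) tensor flip on $U_\ract \otimes_\ahha \due U \lact {}$, cocommutativity gives $\ga_r(u \otimes v) = u_{(1)} v \otimes_\ahha u_{(2)} = u_{(2)} v \otimes_\ahha u_{(1)} = \tau\big(\ga_\ell(u \otimes v)\big)$, whence $\ga_r = \tau \circ \ga_\ell$. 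As $\tau$ is a bijection, $\ga_\ell$ is invertible if and only if $\ga_r$ is, which proves the first claim; moreover $\ga_r^{-1} = \ga_\ell^{-1}\circ\tau$ forces the two translation maps to coincide, $u_{[+]} \otimes^\ahha u_{[-]} = u_+ \otimes_\Aopp u_-$.

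Next I would identify the two duals. Because $s^\ell = t^\ell$ and $A$ is commutative, the defining conditions for $U^{\scalast} = \Hom_{\Aopp}(U_\ract, A)$ and $U_{\scalastd} = \Hom_\ahha(\due U \lact {}, A)$ become identical, so $U^{\scalast} = U_{\scalastd}$ as $k$-modules. Comparing the convolution products (as recalled inside the proof of Theorem \ref{S*-morphism}) one finds $(\phi\phi')(v) = \phi'(v_{(1)})\,\phi(v_{(2)})$; cocommutativity of $U$ together with commutativity of $A$ makes this symmetric in $\phi,\phi'$, so that $H := U^{\scalast} = U_{\scalastd}$ is a \emph{commutative} $A$-algebra. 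Under the finiteness hypothesis, a comparison of the structure maps $(s^{\scalast}_r, t^{\scalast}_r, \gD^{\scalast}_r)$ and $(s^r_{\scalastd}, t^r_{\scalastd}, \gD^r_{\scalastd})$ of \cite[\S3.1]{Kow:HAATCT} then shows that source and target are interchanged and the comultiplications are opposite, that is, $U^{\scalast} = (U_{\scalastd})_\coop$ as right bialgebroids. Finally, since $t^\ell = s^\ell$ and the two translation maps agree, the formulas \rmref{sstarup} and \rmref{sstardown} coincide and define a single map $\mathscr{S} := S^{\scalast} = S_{\scalastd} : H \to H$; by Theorem \ref{criptabalbi} one has $S_{\scalastd} \circ S^{\scalast} = \id$ and $S^{\scalast}\circ S_{\scalastd} = \id$, which under this identification reads $\mathscr{S}^2 = \id$, so $\mathscr{S}$ is involutive.

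It remains to assemble the full Hopf algebroid. I would take the given right bialgebroid $H^r := U^{\scalast}$ over $A$ and the left bialgebroid $H^\ell := (U_{\scalastd})^\op$; the latter is a left bialgebroid over $A$ on the \emph{same} underlying algebra, since $U_{\scalastd}$ is a right bialgebroid and $H$ is commutative, so $H^{\op} = H$. The compatibility axioms of a full Hopf algebroid in the sense of \cite{BoeSzl:HAWBAAIAD} then reduce, following the template of Lemma \ref{vitasnella}, to the statement that $(\mathscr{S}, \id_\ahha)$ is an involutive isomorphism relating $H^\ell$ and $H^r$ through the op/coop identification $U^{\scalast} = (U_{\scalastd})_\coop$: the source/target and counit compatibilities are exactly those already recorded in Theorem \ref{S*-morphism}, (anti)multiplicativity of $\mathscr{S}$ is automatic because $H$ is commutative, and the antipode axioms relating $\gD_\ell$ and $\gD_r$ follow from $\mathscr{S}$ being a morphism of right bialgebroids $U^{\scalast}\to U_{\scalastd}$ combined with $U^{\scalast} = (U_{\scalastd})_\coop$ and the translation identities \rmref{Sch7} and \rmref{Tch7}. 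That $\mathscr{S}$ is the antipode, and that it is involutive, is then precisely the content established in the previous paragraph. I expect this last step to be the main obstacle: the genuine work is not any single computation but the careful reconciliation of the op/coop and source/target conventions of \cite{BoeSzl:HAWBAAIAD} with the right-bialgebroid data of $U^{\scalast}$ and $U_{\scalastd}$, and it is exactly cocommutativity of $U$—equivalently, commutativity of $H$ and the identity $U^{\scalast} = (U_{\scalastd})_\coop$—that makes these axioms collapse to the already-proven involutivity and right-bialgebroid-morphism properties of $\mathscr{S}$.
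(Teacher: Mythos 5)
Your first two paragraphs are correct and essentially supply the details that the paper compresses into ``the first claim directly holds true by the very definitions'': the flip identity $\ga_r=\tau\circ\ga_\ell$, the resulting coincidence of the two translation maps, the identification of $U^{\scalast}$ and $U_{\scalastd}$ as a single commutative convolution algebra, and the involutivity of $\mathscr{S}=S^{\scalast}=S_{\scalastd}$ via Theorem \ref{criptabalbi}. The trouble is in the final assembly, where you make a concrete wrong choice: the constituent left bialgebroid must be $(U^{\scalast})^{\op}=\big((U_{\scalastd})_{\coop}\big)^{\op}$ --- this is exactly the triple indicated in Remark \ref{schonschoen} --- and not $(U_{\scalastd})^{\op}$. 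Since taking opposites interchanges source and target, and since your own identification $U^{\scalast}=(U_{\scalastd})_{\coop}$ reads $s_r^{\scalast}=t_{\scalastd}^r$, $t_r^{\scalast}=s_{\scalastd}^r$, $\gD_r^{\scalast}=(\gD_{\scalastd}^r)^{\coop}$, your $H^\ell:=(U_{\scalastd})^{\op}$ has source $s_r^{\scalast}$, target $t_r^{\scalast}$ and coproduct $(\gD_r^{\scalast})^{\coop}$, i.e.\ the \emph{same} source and target as $H^r:=U^{\scalast}$. The first compatibility axiom of \cite{BoeSzl:HAWBAAIAD}, $s^\ell\circ\epsilon\circ t^r=t^r$, then reduces via $\pl^{\scalast}\circ t_r^{\scalast}=\id_\ahha$ to $s_r^{\scalast}=t_r^{\scalast}$, which fails even for cocommutative $U$: for $U=V^\ell(L)$ one has $s_r^{\scalast}(a)(X)=\epsilon(Xs^\ell(a))=X(a)$ but $t_r^{\scalast}(a)(X)=a\,\epsilon(X)=0$ for $X\in L$. (With the correct choice $(U^{\scalast})^{\op}$, source and target are swapped relative to $H^r$ and all four compatibilities do hold.) Relatedly, these axioms do not involve the antipode at all, so they cannot be ``exactly those already recorded in Theorem \ref{S*-morphism}'', which only concern how $\mathscr{S}$ intertwines structure maps.

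The second, more substantial gap is the antipode axiom itself, $\mu\circ(\mathscr{S}\otimes\id)\circ\gD_\ell=s^r\circ\pl$ together with its companion. This does \emph{not} follow from $\mathscr{S}$ being a morphism of right bialgebroids combined with $U^{\scalast}=(U_{\scalastd})_{\coop}$: those facts say only that $\mathscr{S}$ is an (anti)automorphism of the bialgebroid structure, a property enjoyed, for instance, by the identity map of any commutative and cocommutative Hopf algebra, which is of course not an antipode. The antipode axiom is a convolution-inverse condition and needs an actual computation: evaluate $\mathscr{S}(\phi^{(1)})\phi^{(2)}$ on $u\in U$, unfold the dual product and formula \rmref{sstarup}, use the duality between $\gD_r^{\scalast}$ and the multiplication of $U$ to contract $\phi^{(1)}(u_{(1)-})$ and $\phi^{(2)}(u_{(2)})$ into $\phi(u_{(1)-}u_{(2)})$, and finally invoke \rmref{Sch3}, $u_{(1)+}\otimes_{\Aopp}u_{(1)-}u_{(2)}=u\otimes_{\Aopp}1$ --- not \rmref{Sch7} or \rmref{Tch7} --- to obtain $\epsilon\big(u\,s^\ell(\phi(1_\uhhu))\big)=s_r^{\scalast}(\pl^{\scalast}\phi)(u)$. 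This computation (and its mirror) is precisely the content of the proof of Theorem 3.17 of \cite{KowPos:TCTOHA}, to which the paper's own proof defers with ``follows verbatim''; in your outline it is the one step that is asserted rather than proved, and, as you yourself flag, it is where the main obstacle lies.
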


\begin{proof}
The first claim directly holds true by the very definitions.  The rest of the proof follows  {\em verbatim\/}  in the footsteps of the one of  Theorem 3.17  in  \cite{KowPos:TCTOHA},  which considers the special case  for $  U = V^\ell(L)$.
\end{proof}

As mentioned before, one can also link the duals of a Hopf algebroid $(H,S)$ by transposed maps  $ {}^{t\!}S $, which usually do not coincide with $S^{\scalast}$ or $S_{\scalastd}$ (see also \S\ref{enoteca}). The next result explains
a relation
between them.

\begin{theorem}
\label{movimentoallaforza}
 Let  $ H $  be a Hopf algebroid such that  $ \due H \lact {} $  and  $ H_\ract $  are finitely generated $A$-projective.  Then the diagram
\begin{equation*}
  \xymatrix{  {\big( {(H^r)}^\op_\coop \big)}^{\scalast}  \ar@{->}^{{}^{t\!}S}[rr]  \ar@{->}_{S^{\scalast}_r}[d]  &  & {(H^\ell)}^{\scalast}  \ar@{->}^{S^{\scalast}_\ell}[d]  \\
  {\big( {(H^r)}^\op_\coop \big)}_{\scalastd}  \ar@{->}_{{}^{t\!}S}[rr]  &  &  {(H^\ell)}_{\scalastd}  }
\end{equation*}
of right bialgebroid morphisms
is commutative.
\end{theorem}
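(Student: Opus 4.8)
The plan is to avoid computing the two composites of the square separately and instead to exhibit the whole diagram as a single naturality square for the assignment $U \mapsto S^{\scalast}$ on left Hopf algebroids. The key observation is that both horizontal arrows arise from one and the same morphism, namely the isomorphism of left bialgebroids $(S,\nu) : H^\ell \to (H^r)^\op_\coop$ provided by Lemma \ref{vitasnella}\,(ii). Writing $(F,f)^{\scalast}$ for the transpose $\phi \mapsto f^{-1}\circ\phi\circ F$ of a left bialgebroid isomorphism $(F,f)$ on right duals, and $(F,f)_{\scalastd}$ for the analogous transpose on left duals, the top ${}^{t\!}S$ is $(S,\nu)^{\scalast}$ and the bottom ${}^{t\!}S$ is $(S,\nu)_{\scalastd}$; here the base-ring isomorphism $\nu = \partial s^\ell$ is precisely what makes these maps land in $\Hom_{\Aopp}(-,A)$ resp.\ $\Hom_{\ahha}(-,A)$. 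Since $H$ is a full Hopf algebroid with bijective antipode, $H^\ell$ is a left Hopf algebroid by Remark \ref{left/right-Hopf-left-bialg_cocomm}\,(ii), and then so is $(H^r)^\op_\coop$, being isomorphic to it as a left bialgebroid; hence $S^{\scalast}_\ell$ and $S^{\scalast}_r$ of \rmref{sstarup} are both defined, and under the finiteness hypothesis they are morphisms of right bialgebroids by Theorem \ref{S*-morphism}.

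With these identifications the assertion reduces to the following naturality statement, which I would prove for an arbitrary isomorphism $(F,f) : U \to U'$ of left Hopf algebroids:
\[
S^{\scalast}_U \circ (F,f)^{\scalast} \;=\; (F,f)_{\scalastd} \circ S^{\scalast}_{U'}.
\]
Taking $(F,f) = (S,\nu)$, $U = H^\ell$ and $U' = (H^r)^\op_\coop$ turns this identity into exactly the commutativity of the square, the left edge being $S^{\scalast}_r = S^{\scalast}_{U'}$ and the right edge $S^{\scalast}_\ell = S^{\scalast}_U$.

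To prove the naturality identity I would evaluate both sides on $\phi \in (U')^{\scalast}$ and $u \in U$ by means of formula \rmref{sstarup}. The one structural input needed is that a morphism of left bialgebroids intertwines the Hopf-Galois maps $\alpha_\ell$ — because $\alpha_\ell(u \otimes v) = u_{(1)} \otimes u_{(2)}v$ is built solely from the coproduct and the multiplication — and hence, both $U$ and $U'$ being left Hopf algebroids, intertwines the translation maps: $F(u_+) \otimes_{\Aopp} F(u_-) = F(u)_+ \otimes_{\Aopp} F(u)_-$. Combining this with $F\circ t^\ell_U = t^\ell_{U'}\circ f$ and $\epsilon_{U'}\circ F = f\circ \epsilon_U$, one computes
\[
\big( S^{\scalast}_{U'}(\phi) \big)\big( F(u) \big)
= \epsilon_{U'}\big( F(u)_+ \, t^\ell_{U'}(\phi(F(u)_-)) \big)
= f\Big( \epsilon_{U}\big( u_+ \, t^\ell_{U}\big( f^{-1}(\phi(F(u_-))) \big) \big) \Big).
\]
Applying $f^{-1}$ to the two ends identifies $\big((F,f)_{\scalastd}S^{\scalast}_{U'}(\phi)\big)(u)$ on the left with $\big(S^{\scalast}_U(f^{-1}\phi F)\big)(u) = \big(S^{\scalast}_U (F,f)^{\scalast}(\phi)\big)(u)$ on the right, which is the claim.

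I expect the main obstacle to be bookkeeping rather than anything conceptual: one must track the base-ring change through $\nu$ (and its companion $\mu = \epsilon s^r$ from Lemma \ref{vitasnella}\,(i)) so that the two transposes genuinely land in the right and left duals of $H^\ell$, and one must phrase the intertwining of translation maps over the correct tensor products $-\otimes_{\Aopp}-$ on both sides. Once this is set up, the displayed computation closes the square; since all four edges are morphisms of right bialgebroids, the diagram commutes in that category. This is also consistent with the Hopf algebra case, where $\nu$ and $\mu$ are identities and $S^{\scalast}$ reduces to ${}^{t\!}S$, as recalled in Remark \ref{S* as "antipode"}.
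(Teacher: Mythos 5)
Your proposal is correct, but it reaches the conclusion by a genuinely different route than the paper. The paper's proof is a direct element-wise computation: after identifying $B^\op$ with $A$ via $\nu$, it observes that $(H^r)^\op_\coop$ is the left bialgebroid constituent of the Hopf algebroid $\big( (H^r)^\op_\coop , (H^\ell)^\op_\coop , (S,\mu) \big)$, so that its translation map has the explicit antipode form analogous to \rmref{laterza}; it then evaluates $\langle ({}^{t\!}S \circ S^{\scalast}_r)(\phi), h\rangle$ via \rmref{sstarup}, pushes $S$ through using its anti-coring morphism property between the two coproducts (which is where the term $S^2(h^{(2)})$ appears) together with the identity $\nu^{-1}\partial S = \epsilon$, and recognizes the result as $\langle (S^{\scalast}_\ell \circ {}^{t\!}S)(\phi), h\rangle$. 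You instead isolate a general naturality lemma --- $S^{\scalast}_U \circ (F,f)^{\scalast} = (F,f)_{\scalastd} \circ S^{\scalast}_{U'}$ for any isomorphism $(F,f)$ of left Hopf algebroids --- whose proof needs only that a left bialgebroid morphism intertwines the Hopf-Galois maps $\alpha_\ell$ and hence, both being invertible, the translation maps; the theorem then follows by specializing to the isomorphism $(S,\nu) : H^\ell \to (H^r)^\op_\coop$ of Lemma \ref{vitasnella}. The computational engine is the same in both proofs (formula \rmref{sstarup} plus the fact that $S$ transports the translation map of $H^\ell$ to that of $(H^r)^\op_\coop$), but you obtain that fact abstractly from the intertwining of $\alpha_\ell$, whereas the paper obtains it from the explicit formula \rmref{laterza} and the anti-comultiplicativity of $S$. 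Your approach buys a reusable statement and never needs $S^2$ or the anti-coring property explicitly, cleanly separating the $\nu$-bookkeeping from the actual content; the paper's approach buys concreteness, making visible exactly which full Hopf algebroid identities enter. Two points to tidy up in your write-up: you should note that $F \otimes F$ is well defined on the relevant tensor products over $A^\op$ resp.\ $(A')^\op$ (routine, from $F \circ t^\ell_U = t^\ell_{U'} \circ f$), and, like the paper, you assert rather than prove that the two transposes ${}^{t\!}S$ are morphisms of right bialgebroids --- but the substance of the theorem is the commutativity of the square, which your argument does establish.
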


\begin{proof}
 Let us identify  $ B^\op $  and  $ A $  by means of the $ k $-algebra  isomorphism  $  \nu : A \to B^\op  $ mentioned above;  then the left algebroid  $  {(H^r)}_\coop^\op $  is described by the sextuple
  $$  \big( {(H^r)}^\op  ,  \widehat{s^\ell} := s^r \nu  ,  \widehat{t^\ell} := t^r \nu  , \Delta_r^\coop  ,  \widehat{\epsilon} := {\nu}^{-1}  \partial \big)  .  $$
Moreover, the Hopf algebroid  $  \big( {(H^r)}_\coop^\op  , {(H^\ell)}_\coop^\op  , (S, \mu) : {(H^r)}_\coop^\op \to H^\ell \big)  $  is the one we have to consider to compute  $ S_r^{\scalast}  $.  For  $  \phi \in {\big( {(H_r)}^\op_\coop \big)}_{\!\scalast}  $  and  $  h \in H  $   we have

\begin{equation*}
\begin{split}
 \langle( {}^{t\!}S \circ S_r^{\scalast})(\phi)  , h \rangle
 &=   \widehat{\epsilon} \big( {S(h)}_{(2)}  \widehat{t^\ell}\big(\langle \phi  , S( {S(h)}_{(1)} ) \rangle \big)
=   \big( \nu^{-1} \partial S \big) \big( h^{(1)}  t^\ell\big( \langle \phi  , S^2( h^{(2)})\rangle \big) \\
&=   \epsilon\big( h^{(1)}  t^\ell\big(\langle \phi  , S^2( h^{(2)})\rangle\big) \big) \\
&=   \epsilon\big( h^{(1)}  t^\ell\big(\langle {}^{t\!}S(\phi)  , S( h^{(2)})\rangle\big) \big)
=   \langle ( S_\ell^{\scalast} \circ {}^{t\!}S)(\phi)  , h \rangle,
\end{split}
\end{equation*}
 where we used the explicit form \rmref{laterza} of the translation map and the fact
that $S$ is an anti-coring morphism between left and right coproduct, which proves $  {}^{t\!}S \circ S_r^{\scalast}  =  S_\ell^{\scalast} \circ {}^{t\!}S  $   as claimed.
\end{proof}

\begin{remark}
\label{schonschoen}
 In general, both maps  $ S^{\scalast} $  or  $ S_{\scalastd} $  can be thought of as an extension of the notion of antipode for a full Hopf algebroid, in the following sense.  As mentioned in Lemma \ref{vitasnella}, the antipode in a full Hopf algebroid $H$ yields a bialgebroid morphism $  S : H^\ell \to {(H^r)}^\op_\coop  $. On the other hand, if  $ U $  is a left Hopf algebroid, for which  $ \due U \lact {} $  and  $ U_\ract $  are finitely generated projective as  $ A $-modules,  then we have a similar situation replacing  $( H^\ell , H^r  , S)  $   with the triple  $ ((U^{\scalast})^\op , (U_{\scalastd})_\coop  , S^{\scalast} )$, and one might be tempted to define a Hopf algebroid as a triple $(U,V,S)$ of a left resp.\ right bialgebroid $U$ resp.\ $V$, where the underlying ring structure is {\em not\/}  the same: this way, the apparent asymmetry of a Hopf algebroid consisting of two coring structures but only one ring structure (that makes it difficult to obtain self-duality) would be somewhat attenuated.
On the other hand, in case a left Hopf algebroid is simultaneously a right Hopf algebroid, by Theorem \ref{criptabalbi} both duals are isomorphic and hence can be seen (under the stated finiteness conditions) as {\em its} dual (right) bialgebroid, which carries a Hopf structure by the results in \cite{Schau:TDATDOAHAAHA}.
\end{remark}

\section{Examples and applications}
\label{lautesGequatsche}
In this section we present some further developments and some applications
to specific examples.

\subsection{Mixed distributive law between duals}
   A direct application of the existence of the bialgebroid morphism  $ S^{\scalast} $  (or  $ S_{\scalastd} $)  is to the setup of  {\it distributive laws}.  Indeed, a particular kind of mixed distributive law (or entwining) in the sense of Beck \cite{Bec:DL} can be constructed via the following recipe.  Combining a morphism  $(\phi_1, \phi_0) : (V, B)  \to (V', B')  $  of right (say) bialgebroids with a Hopf-Galois map yields
  $$  \chi : \due {V'} {} \bract \otimes^\behhe  \due V \lact {} \to \due V {} \bract \otimes_\behhe  \due {V'} \blact {},  \quad  v' \otimes^\behhe v  \mapsto  v^{(1)} \otimes_\behhe v'  \phi(v^{(2)}),
$$
which can be easily seen to define a mixed distributive law between $  V'  $  (thought of as a coalgebra) and  $V$  (thought of as an algebra, although its coproduct appears in $\chi$).
Applying this to the two duals of a left bialgebroid $U$ along with $S^{\scalast}$, one obtains
  $$  \chi : U_{\scalastd} {}_\bract \otimes^\ahha  \due {U^{\scalast}} \lact {} \to \due {U^{\scalast}} {} \bract  \otimes_\ahha  \due {U_{\scalastd}} \blact {},  \qquad  \psi \otimes^\ahha \phi
\mapsto  \phi^{(1)} \otimes_\ahha \psi  S^{\scalast}(\phi^{(2)})
$$
as a mixed distributive law between  $ U^{\scalast} $  and  $ U_{\scalastd}  $, to which
any standard construction based on mixed distributive laws could be applied.

\subsection{Lie-Rinehart algebras and their jet spaces}
\label{enoteca}
Let  $(A, L) $  be a Lie-Rinehart algebra ({\em cf.}~\cite{Rin:DFOGCA}, geometrically a Lie algebroid).
Then its (left) universal enveloping algebra  $ V^\ell(L) $ carries not only the structure of a left bialgebroid over the commutative algebra $A$ (see \cite{Xu:QG}) but also of a left Hopf algebroid \cite{KowKra:DAPIACT}:  on generators $a \in A$ and $X \in L$, its translation map is given by
\begin{equation}
\label{transl-map_V(L)}
  a_+ \otimes_\Aopp a_-  =  a \otimes_\Aopp 1   ,  \quad  X_+ \otimes_\Aopp X_-  =  X \otimes_\Aopp 1 - 1 \otimes_\Aopp X.
\end{equation}
Moreover, as  $ V^\ell(L) $  is cocommutative, it is also a right Hopf algebroid.

  {\em Full\/}  Hopf algebroid structures on  $ V^\ell(L) $  are in bijection with right  $ V^\ell(L) $-module  structures  on  $ A  $ which play the r\^ole of possible right counits,  expressed by suitable maps
 $  \partial : V^\ell(L) \to A  $  ({\em cf.}~\cite[\S4.2]{Kow:HAATCT} or \cite{KowPos:TCTOHA} for more information). The corresponding antipode $  S : V^\ell(L) \to V^\ell(L)^\op_\coop  $  is then uniquely determined by the prescriptions
\begin{equation}
\label{militeignoto}
S(a)  =  a   ,  \quad  S(X) = -X + \partial(X),  \qquad \forall  a \in A  , \  \forall  X \in L,
\end{equation}
 on generators. For a general Lie-Rinehart algebra (which does not arise from a Lie algebroid), such a map $\pl$ and hence the antipode might or might not exist.


   Let us consider the (right)  {\em jet spaces}  $  J^r(L) := V^\ell(L)^{\scalast}  $  and
$  {}^r\!J(L) := V^\ell(L)_{\scalastd}  $.  If  $ L $  is finitely generated projective as an  $ A $-module,  then  $ J^r(L) $  and  $  {}^r\!J(L) $  are right bialgebroids  {\em in a suitable topological sense},  as their coproduct takes values in a  {\em topological\/}  tensor product; concerning this, we quickly recall some non-trivial key facts, referring to  \cite{KowPos:TCTOHA, CalVdB:HCAC}  for further details.

First, $ V^\ell(L) $  is the direct limit of an increasing bialgebroid filtration ({\em i.e.}, the strict analogue of a bialgebra filtration) of finitely generated projective modules  $ V^\ell(L)_n  $;  it follows that  $  J^r(L)$
in turn is the inverse limit of all the  $  J^r(L)_n := (V^\ell(L)_n)^{\scalast}  $,  which are finitely generated projective as well.  Similar remarks apply to  $  {}^r\!J(L)  $.  As  $  V^\ell(L)_p \cdot V^\ell(L)_q \subseteq V^\ell(L)_{p+q}  $  (for all  $ p, q \in \N  $),  the recipe used to define the coproduct in  $ U^\scalast $  when  $ U $  is a left bialgebroid such
 that  $ U_\ract $  is
finitely generated $A$-projective (see  \S \ref{regnetswirklich?}) can be applied again and yields maps
  $$
J^r(L)_n = (V^\ell(L)_n)^{\scalast} {\buildrel {\Delta^{J^r}_{\,n}} \over {\relbar\joinrel\longrightarrow}} \hskip-5pt {\textstyle \sum\limits_{p+q=n}} \hskip-5pt
(V^\ell(L)_p)^{\scalast} \!
 {}_{\bract}  \otimes_\ahha {}_{\blact}
 (V^\ell(L)_q)^{\scalast}
  \hskip-1pt  = \hskip-6pt {\textstyle \sum\limits_{p+q=n}} \hskip-5pt  J^r(L)_p {}_{\bract}  \otimes_\ahha  {}_{\blact} J^r(L)_q
$$
whose inverse limit  $  \Delta^{J^r} \! := \lim\limits_{\leftarrow\joinrel\relbar} \Delta^{J^r}_{\,n}  $  is the coproduct of  $ J^r(L)  $.  Similarly, one constructs ``coproduct-like maps''  $ \Delta^{{}^r\!J}_{\,n} $  for the  $  {}^r\!J(L)_n := (V^\ell(L)_n)_{\scalastd}  $  and then takes their inverse limit  $  \Delta^{{}^r\!J} := \lim\limits_{\leftarrow\joinrel\relbar} \Delta^{{}^r\!J}_{\,n}  $  as a coproduct for  $  {}^r\!J(L)  $.

   Now, because of the very definition of the  $ V^\ell(L)_n $  and of the explicit form  \rmref{transl-map_V(L)}  of the translation map of  $ V^\ell(L)  $,  one easily finds that the translation map itself (much like the coproduct) maps every  $  V^\ell(L)_n  $  into  $  \sum_{p+q=n} V^\ell(L)_p \otimes_\Aopp  V^\ell(L)_q  $.  Then formula \rmref{sstarup}  makes sense again, and thus can be used to produce a well-defined map
  $$
S^{\scalast}_n : J^r(L)_n = (V^\ell(L)_n)^{\scalast} \relbar\joinrel\relbar\joinrel\longrightarrow {\big(V^\ell(L)_n\big)}_{\scalast} = {}^r\!J(L)_n.
$$
Moreover, the arguments used in the proof of  Theorem \ref{S*-morphism}  to show that $ S^\scalast $  preserves the coproduct apply again in the present situation, and yield a commutative diagram
\begin{equation}
\label{diagr_Sstar-Delta}
\begin{gathered}
  \xymatrix{
    J^r(L)_n  \ar@{->}[d]_{S_n^{\scalast}} \ar[rr]^{\Delta^{J^r}_{\,n} \hskip33pt}  &  &  {\textstyle \sum\limits_{p+q=n}} \hskip-3pt  J^r(L)_p \,
 {}_{\bract} \otimes_\ahha {}_{\blact}
J^r(L)_q  \ar@{->}[d]^{{\sum\limits_{p+q=n}} \hskip-1pt S_p^{\scalast} \otimes S_q^{\scalast}}  \\
     {}^r\!J(L)_n  \ar[rr]_{\Delta^{{}^r\!J}_{\,n} \hskip33pt}  &  &  \hskip-1pt  {\textstyle \sum\limits_{p+q=n}} \hskip-3pt  {}^r\!J(L)_p \,
 {}_{\bract} \otimes_\ahha {}_{\blact}
{}^r\!J(L)_q  }
\end{gathered}
\end{equation}

\noindent
 Taking the inverse limit of all these  $ S^{\scalast}_n $  we get a well-defined (continuous) map
  $$
S^{\scalast} : J^r(L) = V^\ell(L)^{\scalast} \relbar\joinrel\relbar\joinrel\longrightarrow V^\ell(L)_{\scalast} = {}^r\!J(L).
$$
It follows by construction that this map necessarily coincides with the same name map in  \S\ref{linking},  hence it respects all  $ \Ae $-ring  structure maps of  $ J^r(L) $  and  $ {}^r\!J(L) $  as well as their counits; from  \rmref{diagr_Sstar-Delta}  follows that this map also respects the coproduct on both sides.  All in all, this means that  $ S^{\scalast} $  is a morphism of (topological) bialgebroids.
As  $ V^\ell(L) $  is also a right Hopf algebroid, \S\ref{linking}  also provides a map
  $
S_{\scalastd} : {}^r\!J(L) \to J^r(L),
$
which again turns out to be a morphism of (topological) bialgebroids, inverse to $ S^{\scalast} $.  The outcome is that
 \vskip1pt
   \centerline{ \it Theorem \ref{S*-morphism} holds true (in full strength) for  $  U = V^\ell(L) $ }
 \vskip1pt
\noindent
(replacing the formulation ``morphism of right bialgebroids'' by ``morphism of topological right bialgebroids''),
although the left bialgebroid  $ V^\ell(L) $  does not comply with the finiteness assumptions required (in general) for that result.

   Finally, note that both  $ J^r(L) $  and  $ {}^r\!J(L) $  are commutative (because  $ V^\ell(L) $  is cocommutative), so they are also left bialgebroids.  Identifying $  J^r(L) $ as the coopposite of
$  {}^r\!J(L) $ and with the cocommutativity of  $ V^\ell(L)  $,  one finds that  $ S^{\scalast} $  and  $ S_{\scalastd} $  are equal and yield an  {\em antipode\/}  for  $ J^r(L)  $,  which in this way becomes a  full Hopf algebroid.
  In other words,  Proposition \ref{dual-cocomm_Hopf}  holds true for  $  U = V^\ell(L)  $  and  $  U^{\scalast} = J^r(L) = {}^r\!J(L)_\coop = (U_{\scalastd})_\coop  $,  although  $  V^\ell(L)  $  is  {\em not\/}  finitely generated projective.

\subsubsection{Difference between $S^*$ and $ {}^{t\!}S$}

In this specific example, one can explicitly observe the difference between $S^{\scalast}$ and the transpose of the antipode $S$ on $V^\ell(L)$ in \rmref{militeignoto}.
Apart from the fact mentioned above that $S^{\scalast}$ always exists while ${}^{t\!}S$ does not, this is already clear on an abstract level since these are maps of different nature as pointed out in  Theorem \ref{movimentoallaforza}. Nevertheless, one directly sees here that with respect to the $A$-module structures coming from left and right multiplication in $V^\ell(L)$, the map $S^\scalast(\phi)$ is left $A$-linear whereas $ {}^{t\!}S(\phi)$ is $A$-linear from the right, for $\phi \in  V^\ell(L)^{\scalast}$.
Evaluating both maps on an element in $L \subset V^\ell(L)$, one obtains
\begin{equation*}
\label{tSxVL}
   {}^{t\!}S(\phi)(X) = -  \phi(X) + \partial(X) \phi(1)  \qquad \qquad  \forall  \phi \in V^\ell(L)^{\scalast},  X \in L,
\end{equation*}
on one hand, and on the other hand:
$$
S^{\scalast}(\phi)(X)   =  - \phi(X) + X(\phi(1)) \qquad \qquad  \forall  \phi \in V^\ell(L)^{\scalast}  ,  X \in L,
$$
where $L \to \Der(A,A), \ X \mapsto \{ a \mapsto X(a)\}$ denotes the anchor of the Lie-Rinehart algebra $(A,L)$. Using the property $Xa - aX = X(a)$ with respect to the product in $V^\ell(L)$ as well as the right $A$-linearity of $\partial$, one obtains $\pl(aX) = \pl(X) a - X(a)$ and therefore
$
{}^{t\!}S(\phi)(X) - S^{\scalast}(\phi)(X) = \partial(\phi(1)X),
$
which in general does not vanish.

\subsection{Examples from quantisation}

  In this section, we adapt our main constructions and results to a different setup, that of quantisations of universal enveloping algebras (of Lie-Rinehart algebras) and other associated objects.  In particular, this means that we deal with yet another kind of topological bialgebroids, so that we have to clarify the nature of these objects and how the analysis and results of
the preceding sections
fits to this modified context.

\begin{definition}  \label{def-q-bialgd}
\label{def-QUEAd}
 Let  $ \big( U, A, s^\ell, t^\ell, m, \Delta , \epsilon \big) $  be a left (resp.\ right) bialgebroid.
A {\em quantisation of $U$} (or {\em quantum bialgebroid\/}) is a  {\em topological\/}   left (resp.\ right) bialgebroid  $ \big( U_h, A_h , s^\ell_h , t^\ell_h , m_h , \Delta_h , \epsilon_h \big) $  over a topological  $ k[[h]] $-algebra  $ A_h $  such that:
\begin{enumerate}
\compactlist{99}
\item
$  A_h $  is isomorphic to  $ A[[h]] $  as a topological  $ k[[h]] $-module,  and this isomorphism induces an algebra isomorphism  $  A_h \big/  h  A_h  \cong  A[[h]] \big/  hA[[h]]  \cong  A  $;
\item
     $  U_h $  is isomorphic to  $ U[[h]] $  as a topological  $ k[[h]] $-module;
\item
   $  U_h \big/  hU_h  \cong U[[h]] \big/  hU[[h]]  $  is isomorphic to  $U $  as a left  $ A $-bialgebroid  via the isomorphism  $  A_h \big/  hA_h \cong A[[h]] \big/  hA[[h]] \cong A  $    mentioned in (i);
\item
the coproduct  $ \Delta_h $  of  $ U_h $  takes values in  $  U_h  \widehat{\times}_{\ahha_h} U_h  $, where
 \begin{equation*}
\qqquad   {U_h} \widehat{\times}_{\ahha_h} {U_h}  :=
 \big\{ {\textstyle \sum_i} u_ i \otimes u'_i \in
{U_h}_\ract  \, \widehat{\otimes}_{\ahha_h} \due {U_h} \lact {}
 \mid {\textstyle \sum_i}  (a \blact u_i) \otimes u'_i = {\textstyle \sum_i} u_i \otimes (u'_i \blacktriangleleft a) \big\}
\end{equation*}
is the  {\em Takeuchi-Sweedler product},  and where
$
 {U_h}_\ract  \, \widehat{\otimes}_{\ahha_h} \due {U_h} \lact {}
$
denotes the completion of
$
 {U_h}_\ract  \, {\otimes}_{\ahha_h} \due {U_h} \lact {}
$
with respect to the  $ h $-adic  topology.
\end{enumerate}
   In this setting, we shall say that  $ U_h $  is a  {\em quantisation},  or  {\em quantum deformation},  of  $ U  $.
\end{definition}

\begin{remark}  \quad
\label{rem.s_quant-def.s}

   {\it (a)}\, The notions of quantum left or right Hopf algebroid are defined replacing the ordinary tensor product by a suitable completion, just as for $ J^r(L) $ above.

   {\it (b)}\,  When dealing with  $ k[[h]] $-modules,  any morphism ({\em i.e.},  $ k[[h]] $-linear  map) is automatically continuous for the  $ h $-adic  topology on the source and target  $ k[[h]] $-module; we shall tacitly use this fact with no further mention.  In particular, for a quantum bialgebroid  $ U_h $ both its (full linear) duals  $ (U_h)^{\scalast} $  and  $ (U_h)_{\scalastd} $  are also  {\em topological duals}.

   {\it (c)}\,
For a left bialgebroid $ U $  with a quantisation $ U_h $, assume that  $ U $  is also a left Hopf algebroid. Then  $ U_h $  is automatically a left Hopf algebroid (in a topological sense) as well by a standard argument in deformation theory:
by assumption, we have  $  U_h \cong U[[h]]  $  as modules over  $  A_h \cong A[[h]]  $;  from this isomorphism one deduces similar isomorphisms for modules of homomorphisms or tensor products of modules.  Moreover --- because  $  U_h \Big/ h U_h \cong U  $  as bialgebroids ---,   all bialgebroid structure maps of  $ U_h $  taken modulo  $ h $  reduce to the same name structure maps of  $ U  $.  Now, for the (topological) left bialgebroid  $ U_h $  we have a well-defined Hopf-Galois map
  $$
(\ga_\ell)_h : \due {U_h} \blact {} \widehat{\otimes}_{{\scriptscriptstyle{A^{\rm op}_h}}} \, {U_h}_\ract  \to {U_h}_\ract  \, \widehat{\otimes}_{\ahha_h} \, \due {U_h} \lact {}  ,  \quad
   u \, \widehat{\otimes}_{A^{\rm op}_h} v \mapsto u_{(1)} \, \widehat{\otimes}_{\ahha_h}  \, u_{(2)} v,
$$
which belongs to
$  \Hom_{k[[h]]}\big( \due {U_h} \blact {} \widehat{\otimes}_{{\scriptscriptstyle{A^{\rm op}_h}}} {U_h}_\ract  ,  {U_h}_\ract \widehat{\otimes}_{\ahha_h} \due {U_h} \lact {} \big)  $:  as mentioned above, this module is isomorphic to
  $  \Hom_k\big( \due U \blact {} \otimes_{\Aopp} U_\ract,  U_\ract  \otimes_\ahha  \due U \lact {} \big)[[h]]  $,  so that  $ {(\ga_\ell)}_h $  expands as  $  (\ga_\ell)_h = \sum_{n \in \N} a_n h^n  $  for some  $  a_n \in \Hom_k\big( \due U \blact {} \otimes_{\Aopp} U_\ract  ,  U_\ract  \otimes_\ahha \due U \lact {} \big)  $.
In addition, as all structure maps of  $ U_h $  modulo  $ h $  are just those of  $ U  $,  one has  $  \ga_\ell = (\ga_\ell)_h \mod h = a_0  $.  But  $  U $  was a left Hopf algebroid, hence  $  \ga_\ell = a_0  $  is invertible, and therefore  $  (\ga_\ell)_h = \sum_{n \in \N} a_n h^n  $  is invertible too, so that  $  U_h $  is a left Hopf algebroid as well.
\end{remark}

\begin{free text}{\bf Universal enveloping algebras and deformations.}
 As in  \cite{CheGav:DFFQG},  one can consider a quantum deformation  $ V^\ell(L)_h $  of  $ V^\ell(L) $:  as the latter is both a left and right Hopf algebroid, the same holds true for  $ V^\ell(L)_h $  as well, by Remark \ref{rem.s_quant-def.s} {\it (c)\/}  above.

   On the other hand, the dual (right) bialgebroids  $  J^r(L)_h := (V^\ell(L)_h)^{\scalast}  $  and
$  {}^r{\!}J(L)_h = (V^\ell(L)_h)_{\scalastd}  $  are deformations of  $  J^r(L) = V^\ell(L)^{\scalast} = (V^\ell(L)_{\scalastd})_\coop  $.  This common ``limit'' is a full Hopf algebroid (with bijective antipode) by the above, hence in particular it is a left and right Hopf algebroid with respect to the underlying right bialgebroid structure. It then follows that the same is true for the right bialgebroids  $ J^r(L)_h $  and  $ {}^r{\!}J(L)_h  $, but usually they are not full Hopf algebroids.  Nonetheless, we can apply our constructions of  \S\ref{linking}  to  $  U_h := V^\ell(L)_h  $  and find the maps  $ S^{\scalast} $  and  $ S_{\scalastd}  $,  as we now shortly explain.

   By construction, the maps  $ S^{\scalast} $  and  $ S_{\scalastd} $  as in  (\ref{sstarup})  and  (\ref{sstardown})  are given in terms of structure maps and translation maps of the (non-topological) bialgebroid  $ U  $:  when  $ U $  is replaced by  $ U_h $,  all those maps are continuous, hence both definitions still make sense and provide maps
$  S^{\scalast} : (U_h)^{\scalast} \to (U_h)_{\scalastd}  $  and  $  S_{\scalastd} : (U_h)_{\scalastd}  \to (U_h)^{\scalast}  $  as announced.  Once these maps are properly defined (for  $  U_h = V^\ell(L)_h  $),  the proof of all their properties still works untouched (all arguments and calculations make sense and go through in the proper setup of topological bialgebroids).  In particular,  Theorem \ref{criptabalbi}  then assures that the two deformations  $  J^r(L)_h := (U_h)^{\scalast}  $  and  $  {}^r{\!}J(L)_h := (U_h)_{\scalastd}  $  of  $  V^\ell(L)^{\scalast} = (V^\ell(L)_{\scalastd})_\coop  $  are isomorphic (as right bialgebroids) via  $ S^{\scalast} $  and  $ S_{\scalastd}  $.
\end{free text}

\subsection{Cases where a dualising module exists}
\label{heisz}
 In this section, we will come back to the situation of dualising modules as in \S\ref{malgenauerhinsehen} by investigating their (deformation) quantisation. To this end, we first need to introduce some extra notation, terminology, and definitions with respect to decreasing filtrations; see, for example, \cite{Che:DPFQG, Schn:AITDM} for further basic results and details.

Let $A$ be an algebra endowed with a decreasing filtration $(F_nA)_{n \in \N}$ and consider a filtered $FA$-module denoted by $FM$, whereas  its underlying $A$-module will be denoted by $M$.  If  $FM$ and $FN$ are two filtered $FA$-modules, then
a filtered morphism $Fu : FM \to FN$ is a morphism $u :M \to N$ of the underlying $A$-modules such that $u(F_sM) \subset F_s N$.  A filtered morphism  $Fu : FM \to FN$ is {\em strict} if it satisfies
$u(F_{s}M)=u(M)\cap F_{s}N$.  An exact sequence of $FA$-modules is a sequence
\begin{equation}
\label{effervescentenaturale}
FM \buildrel{Fu}\over \longrightarrow FN
\buildrel{Fv} \over \longrightarrow FP
\end{equation}
such that $\Ker(F_s v) = \mathrm{Im}\,(F_s u)  $,  where  $  F_s v := v{\big|}_{F_s N}  $  and  $  F_s u := u{\big|}_{F_s M}  $; hence  $ Fu $  is strict.  If moreover  $ Fv $  is also strict, then \rmref{effervescentenaturale}  is a called a {\em strict exact sequence}.

   The filtration of a filtered module gives rise to a topology and even a metric if the filtered module is separated, that is, if $\bigcap_{n \in \N} F_nM=\{0\}  $.
For any $r \in \Z$ and for any $FA$-module $FM$, we define the
{\em shifted module $FM(r)$} as the module $M$ endowed with the filtration
$( F_{s+r}M )_{s \in \Z}$.
An $FA$-module is called {\em finite free} if isomorphic to an
$FA$-module of the type
$\bigoplus_{i=1}^{p}FA(-d_{i})$, where
$d_{1},\dots , d_{p} \in\Z$. An $FA$-module $FM$ is called {\em of finite
type} if one  can find
$m_{1} \in F_{d_{1}}M, \dots, m_{p}\in F_{d_{p}}M$ such that any
$m \in F_{d}M$ may be written as
$$
m={\displaystyle \sum_{i=1}^{p}a_{d-d_{i}}}m_{i},
$$
where $a_{d-d_{i}} \in F_{d-d_{i}}A$.
We will be dealing with the case where $M$ is a $k[[h]]$-module and
$F_nM = h^n M$, the so-called {\em $h$-adic filtration}.

\begin{remark}
 The existence of a translation map if $U_h$ is a left or right Hopf algebroid makes it possible to endow
\begin{itemize}
\item[--]
$\Hom$-spaces with values in a $h$-adic complete space, and
\item[--]
complete tensor products of $U_h$-modules
\end{itemize}
with further natural $U_h$-module structures.  Let us make this explicit for the cases we will use, {\em i.e.}, adapt Proposition \ref{structures}.

   If ${\mathcal P}_h$ is a right $U_h$-module and $N_h$ is a left $U_h$-module, then
$_{\blact}{\mathcal P}_h \otimes_{{\scriptscriptstyle{A^{\rm op}_h}}} {N_h}_{\ract}$ is endowed with a right $U_h$-module structure as follows:
if $u \in U_h$, then
$u_+ \otimes_{{\scriptscriptstyle{A^{\rm op}_h}}} u_{-} \in {}_\blact U_h \, \widehat{\otimes}_{{\scriptscriptstyle{A^{\rm op}_h}}} {U_h}_\ract$
can be written as
$u_+\otimes_{{\scriptscriptstyle{A^{\rm op}_h}}} u_- = \lim\limits_{n \to \infty} u_{+,n}\otimes_{{\scriptscriptstyle{A^{\rm op}_h}}} u_{-,n}$.
For
$x\otimes_{{\scriptscriptstyle{A^{\rm op}_h}}} y \in
{}_{\blact}{\mathcal P}_h\otimes_{{\scriptscriptstyle{A^{\rm op}_h}}} {N_h}_{\ract}$,
one defines
$$
(x\otimes_{k[[h]]}y) u :=  \lim\limits_{n \to \infty}  x u_{+,n}\otimes_{{\scriptscriptstyle{A^{\rm op}_h}}} u_{-,n} y
\in
{}_{\blact}{\mathcal P}_h \otimes_{{\scriptscriptstyle{A^{\rm op}_h}}} {N_h}_{\ract}.
$$
As
$
\lim\limits_{n \to \infty} t^\ell(a)u_{+,n}\otimes_{{\scriptscriptstyle{A^{\rm op}_h}}}u_{-,n} = \lim\limits_{n \to \infty} u_{+,n}\otimes_{{\scriptscriptstyle{A^{\rm op}_h}}}u_{-,n}t^\ell(a)
$,
 we have thus defined a right action of $U_h$ on
$_{\blact}{\mathcal P}_h\otimes_{{\scriptscriptstyle{A^{\rm op}_h}}} {N_h}_{\ract}$.

   If ${\mathcal P}_h$ and $N_h$ are right  $U_h$-modules, then
$ \Hom_{{{\scriptscriptstyle{A^{\rm op}_h}}}}({\mathcal P}_h, N_h)$ is endowed with a left $U_h$-module structure as follows: if
$u_{[+]}\otimes^{\ahha_h} u_{[-]} =\lim\limits_{n \to \infty} u_{[+],n} \otimes^{\ahha_h} u_{[-],n}
\in U_{h \bract} \, \widehat{\otimes}^{A_h} {}_\lact U_h$,
one sets for
$  \phi \in \Hom_{A_h^{\text{op}}}({\mathcal P}_h, N_h)  $  and  $  u \in U _h  , \,  p \in P_h  $,
$$
u_n \phi (p) :=\phi \left ( p u_{[+],n}\right )u_{[-],n},
$$
and argues similarly as above that this defines, indeed, a left $U_h$-action on $\Hom_{A_h^{\text{op}}}({\mathcal P}_h, N_h)$.
\end{remark}

\begin{lemma}
 Let $(U_h,A_h)$
be a quantum left Hopf algebroid, and let ${\mathcal P}_h$ be a right $U_h$-module such that ${\mathcal P}_{h \blacktriangleleft}$
(respectively  $_{\blact}{\mathcal P}_{h}$)
is a finitely generated projective $A_h^{{\rm op}}$-module (resp.~$A_h$-module). Then
\begin{enumerate}
\compactlist{99}
\item
${\mathcal P}_h$ is complete for the $h$-adic topology.
\item
For a right $U_h$-module $N_h$, any element of $\Hom_{{{\scriptscriptstyle{A^{\rm op}_h}}}}({\mathcal P}_h, N_h)$ is continuous if we endow both modules with the $h$-adic topology.
\item
If $N_h$ is a left $U_h$-module that is  complete in the $h$-adic topology, then so is the right $U_h$-module
${}_{\blact}{\mathcal P}_h\otimes_{{\scriptscriptstyle{A^{\rm op}_h}}}{N_h}_{\ract}$.
\item
 If $N_h$ is a right $U_h$-module that is  complete in the $h$-adic topology, then so is the left $U_h$-module
$\Hom_{{{\scriptscriptstyle{A^{\rm op}_h}}}}({\mathcal P}_h, N_h)$.
\end{enumerate}
\end{lemma}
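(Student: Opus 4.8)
The plan is to reduce all four assertions to a single principle: a finitely generated projective module over the $h$-adically complete ring $A_h$ (or $A_h^{\rm op}$) is a retract of a finite free module, and both finite free modules and retracts of $h$-adically complete modules are again complete. First I would record that $A_h$, and hence $A_h^{\rm op}$ (which has the same underlying $k[[h]]$-module), is $h$-adically complete: by Definition \ref{def-q-bialgd}\,{\it (i)} we have $A_h \cong A[[h]]$ as topological $k[[h]]$-modules, and $A[[h]] = \varprojlim_n A[[h]]/h^nA[[h]]$ is complete and separated for the $h$-adic filtration $F_n = h^nA[[h]]$. Consequently a finite free module $(A_h^{\rm op})^m \cong A[[h]]^m$ is complete, being a finite direct product of complete modules; likewise $N_h^m$ is complete whenever $N_h$ is.

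The crucial compatibility is the following. If $P$ is a direct summand of a finite free module, say $R^m = P \oplus Q$ over $R \in \{A_h, A_h^{\rm op}\}$, then because multiplication by $h^k$ is $k[[h]]$-linear and hence respects this decomposition, one has $h^kR^m = h^kP \oplus h^kQ$ and therefore $P \cap h^kR^m = h^kP$. Thus the intrinsic $h$-adic filtration on $P$ coincides with the subspace filtration inherited from $R^m$, and in particular $\bigcap_k h^kP \subseteq \bigcap_k h^kR^m = 0$, so $P$ is separated. Since the completion functor $\varprojlim_k(-)/h^k(-)$ is additive, the identity $R^m = \widehat{R^m} \cong \widehat{P} \oplus \widehat{Q}$ decomposes compatibly with $P \oplus Q$ and restricts to an isomorphism $P \xrightarrow{\sim} \widehat{P}$; that is, every retract of a complete module is complete. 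Applying this with $R = A_h^{\rm op}$ and $P = \mathcal{P}_h$, using that $\mathcal{P}_{h\blacktriangleleft}$ is finitely generated projective over $A_h^{\rm op}$, yields part {\it (i)}.

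For part {\it (ii)}, any $\phi \in \Hom_{A_h^{\rm op}}(\mathcal{P}_h, N_h)$ is in particular $k[[h]]$-linear, so $\phi(h^n\mathcal{P}_h) = h^n\phi(\mathcal{P}_h) \subseteq h^nN_h$; hence $\phi$ is a filtered morphism and therefore continuous, exactly as already noted in Remark \ref{rem.s_quant-def.s}\,{\it (b)}. For parts {\it (iii)} and {\it (iv)} I would again invoke the retract principle. Writing $\mathcal{P}_h \oplus Q \cong (A_h^{\rm op})^m$ (for {\it (iii)} using that $_{\blact}\mathcal{P}_h$ is finitely generated projective over $A_h$, equivalently as a right $A_h^{\rm op}$-module; for {\it (iv)} using that $\mathcal{P}_{h\blacktriangleleft}$ is finitely generated projective over $A_h^{\rm op}$), the additivity of $\otimes_{A_h^{\rm op}} N_h$ and of $\Hom_{A_h^{\rm op}}(-,N_h)$, together with $A_h^{\rm op} \otimes_{A_h^{\rm op}} {N_h}_\ract \cong N_h \cong \Hom_{A_h^{\rm op}}(A_h^{\rm op}, N_h)$, gives
\[
\bigl({}_{\blact}\mathcal{P}_h \otimes_{A_h^{\rm op}} {N_h}_\ract\bigr) \oplus \bigl(Q \otimes_{A_h^{\rm op}} N_h\bigr) \cong N_h^m, \qquad \Hom_{A_h^{\rm op}}(\mathcal{P}_h, N_h) \oplus \Hom_{A_h^{\rm op}}(Q, N_h) \cong N_h^m.
\]
Since $N_h$ is complete by hypothesis, so is $N_h^m$, and the retract principle shows that the relevant summand is complete, proving {\it (iii)} and {\it (iv)}.

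The only genuinely delicate point, and the one I would treat most carefully, is the filtration-compatibility identity $P \cap h^kR^m = h^kP$ together with the descent of completeness to retracts; everything else (the identifications of the tensor and $\Hom$ modules, and $k[[h]]$-linearity implying continuity) is formal. Concretely, one must check that the $h$-adic topology intended on $\mathcal{P}_h$, on $_{\blact}\mathcal{P}_h \otimes_{A_h^{\rm op}} {N_h}_\ract$, and on $\Hom_{A_h^{\rm op}}(\mathcal{P}_h, N_h)$ is the intrinsic one coming from multiplication by $h$, so that the above direct-sum decompositions are by filtered ($k[[h]]$-linear) maps and the retract argument genuinely applies; this is precisely what the compatibility identity guarantees.
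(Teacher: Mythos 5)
Your proof is correct and follows essentially the same route as the paper's: realise the relevant module as a direct summand of a finite free module (resp.\ exhibit the tensor/\!$\Hom$ module as a direct summand of $N_h^m$), observe that finite free modules over $A_h^{\rm op}$ and finite powers of complete modules are $h$-adically complete, and conclude that completeness descends to the summand. The only difference is that you spell out the retract principle explicitly (via the filtration compatibility $P \cap h^k R^m = h^k P$ and additivity of completion), a step the paper uses implicitly without justification, so your version is the same argument with one gap filled in rather than a different approach.
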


\begin{proof}
If $N_h$ is a right $U_h$-module endowed with the $h$-adic topology, then
the $h$-adic topology on $(N_h)^p$ coincides with the product topology. Thus, if $N_h$ is complete for the $h$-adic topology, then so is $(N_h)^p$.
\begin{enumerate}
\compactlist{99}
\item
As $\cP_h$  is a finitely generated projective $A_h^{{\rm op}}$-module, it is a summand of a free module, which is complete for the $h$-adic topology as $A_h$ is so. Hence $\cP_h$ is complete for the $h$-adic topology.
\item
This is obvious as such a morphism is $k[[h]]$-linear.
\item
$\cP_h$ is a direct summand of a rank $r$ free $A_h^{{\rm op}}$-module $F_h$.
Thus
${}_{\blact}{\mathcal P}_h \otimes_{{\scriptscriptstyle{A^{\rm op}_h}}} N_{\ract}$ is a summand of $(N_h)^r$, which
is complete, hence it is itself complete.
\item
The proof of this part is analogous to the proof of (iii).
\end{enumerate}
\end{proof}

In the following, denote by $\cmodu$ resp.\ $\ucmod$  the category of right resp.\ left $U_h$-modules which are
complete for the $h$-adic topology. We then have the following result, analogous to Proposition \ref{left and right modules}:

\begin{proposition}
\label{left and right modules h}
 Let  $(U_h,A_h)$  be simultaneously a quantum left and right Hopf algebroid.
Assume that there exists a right  $ U_h $-module  $ \cP_h$, where $ {\cP_h}_{\bract} $
(resp.\ ${}_\blact \cP_h$)  is finitely generated projective over $A_h^{{\rm op}}$ (resp.\  $A_h$), such that
\begin{enumerate}
\compactlist{99}
\item
the left $U_h$-module morphism
 $$
A_h \to \Hom_{{\scriptscriptstyle{A^{\rm op}_h}}}(\cP_h  , \cP_h),  \quad  a  \mapsto  \{ p \mapsto a \blact p \}
$$
 is an isomorphism of $k[[h]]$-modules;
\item
the evaluation map
\begin{equation*}
\due {\cP_h} \blact {} \otimes_{{\scriptscriptstyle{A^{\rm op}_h}}} \Hom_{{\scriptscriptstyle{A^{\rm op}_h}}}(\cP_h  , N_h)_{\ract} \to N_h,
\quad p  \otimes_{{\scriptscriptstyle{A^{\rm op}_h}}}  \phi  \mapsto  \phi(p)
\end{equation*}
is an isomorphism for any $ N_h \in \cmodu $.
\end{enumerate}
   Then
$$
\ucmod \to \cmodu, \quad M_h \mapsto \due {\cP_h} \blact {} \otimes_\Aopp {M_h}_\ract
$$
is an equivalence of categories with quasi inverse given by
$  N'_h  \mapsto  \Hom_{{\scriptscriptstyle{A^{\rm op}_h}}}(\cP_h , N'_h)  $.
\end{proposition}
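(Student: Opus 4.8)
The plan is to follow the proof of Proposition~\ref{left and right modules} step by step, transplanting each construction into the $h$-adically complete setting and using the preceding lemma to guarantee that the two functors take values in $\cmodu$ and $\ucmod$. First I would verify that the functors are well defined: for $M_h \in \ucmod$, the object $\due {\cP_h} \blact {} \otimes_{{\scriptscriptstyle{A^{\rm op}_h}}} {M_h}_\ract$ carries a right $U_h$-module structure by the topological analogue of \rmref{superga1} spelled out in the remark preceding the lemma, and it is $h$-adically complete by part~(iii) of that lemma; dually, $\Hom_{{\scriptscriptstyle{A^{\rm op}_h}}}(\cP_h, N'_h)$ carries a left $U_h$-module structure by the topological analogue of \rmref{lingotto2} and is complete by part~(iv). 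Thus
$$
F_h \colon \ucmod \to \cmodu, \quad M_h \mapsto \due {\cP_h} \blact {} \otimes_{{\scriptscriptstyle{A^{\rm op}_h}}} {M_h}_\ract, \qquad G_h \colon \cmodu \to \ucmod, \quad N'_h \mapsto \Hom_{{\scriptscriptstyle{A^{\rm op}_h}}}(\cP_h, N'_h),
$$
are genuine functors between the stated categories.

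Next I would produce the two natural isomorphisms realising unit and counit. For the unit, I would introduce the (completed) comparison map
$$
{M_h}_\ract \widehat{\otimes}_{\ahha_h} {}_\lact \Hom_{{\scriptscriptstyle{A^{\rm op}_h}}}(N_h, N'_h) \to \Hom_{{\scriptscriptstyle{A^{\rm op}_h}}}\!\big(N_h, \due {N'_h} \blact {} \otimes_{{\scriptscriptstyle{A^{\rm op}_h}}} {M_h}_\ract\big), \quad m \otimes \chi \mapsto \{ n \mapsto \chi(n) \otimes m \},
$$
and check that it is a morphism of left $U_h$-modules by exactly the computation of Proposition~\ref{left and right modules}, which rests on the mixed relation \rmref{mampf3} (the $U_h$-actions being those of \rmref{baratti&milano} combined with \rmref{lingotto2} on the source and of \rmref{lingotto2} combined with \rmref{superga1} on the target). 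This identity persists verbatim for the quantum Hopf algebroid $U_h$ since every map in sight is $k[[h]]$-linear, hence continuous, and the translation maps are limits of elements of the algebraic tensor product. Specialising to $N_h = N'_h = \cP_h$ and invoking hypothesis~(i), namely $A_h \cong \Hom_{{\scriptscriptstyle{A^{\rm op}_h}}}(\cP_h, \cP_h)$, then yields the unit $M_h \to G_h F_h(M_h)$; it is an isomorphism because $\cP_h$ is finitely generated projective over $A_h^{\mathrm{op}}$, so its dual basis is finite and the Hom-tensor comparison commutes with $h$-adic completion.

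For the counit I would show, using the topological versions of \rmref{mampf2} and \rmref{Sch7}, that the evaluation map of hypothesis~(ii) is a morphism of right $U_h$-modules; it is an isomorphism precisely by that hypothesis. Since unit and counit are then natural isomorphisms, $F_h$ and $G_h$ form an adjoint equivalence, which is the assertion.

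The main obstacle will be bookkeeping rather than a new idea: one must ensure that forming $\due {\cP_h} \blact {} \otimes_{{\scriptscriptstyle{A^{\rm op}_h}}} (-)$ and $\Hom_{{\scriptscriptstyle{A^{\rm op}_h}}}(\cP_h, -)$ commutes with $h$-adic completion, and that the module structures of Proposition~\ref{structures} together with the identities \rmref{mampf2}, \rmref{mampf3}, \rmref{Sch7} survive the passage to the completed tensor product. The finiteness of $\cP_h$ over $A_h^{\mathrm{op}}$ (and over $A_h$) is exactly what renders the dual-basis argument finite, so that the purely algebraic isomorphisms of Proposition~\ref{left and right modules} upgrade to isomorphisms of complete $U_h$-modules by continuity.
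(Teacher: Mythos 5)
Your proposal is correct and takes exactly the route the paper intends: the paper gives no separate proof of this proposition, presenting it as ``analogous to Proposition \ref{left and right modules}'' immediately after the remark that installs the topological module structures and the lemma that guarantees $h$-adic completeness, which is precisely the transplantation you carry out. Your argument reproduces the paper's proof of the algebraic version step for step --- unit via the comparison map and \rmref{mampf3} specialised to $N_h=N'_h=\cP_h$ together with hypothesis (i), counit via the evaluation map, \rmref{mampf2}, and \rmref{Sch7}, with hypothesis (ii) --- upgraded by the continuity/completeness bookkeeping the paper's preceding remark and lemma were designed to supply.
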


   We will now give an example of such a situation. Consider a left bialgebroid $(U,A)$ and a quantisation $(U_h, A_h)$ of it.  Observe that the natural left $U_h$-module structure on $A_h$ quantises that of $U$ on $A$.

\begin{theorem}
\label{maximal exterior power}
Let $(U,A)$ be a left bialgebroid, where $U$ is assumed to be a $k$-Noetherian algebra. Assume that  there exists an integer $d$ satisfying
$$
\Ext^i_U(A, U)=
\left \{
\begin{array}{ll}
0 &  \mbox{if}  \ i \neq d,\\
\Lambda & \mbox{if}  \ i=d.
\end{array}
 \right .
$$
Then there exists an $A_h$-module $\Lambda_h$ that is a quantisation of $\Lambda$ such that
$$
\Ext^i_{U_h}(A_h, U_h)=
\left \{
\begin{array}{ll}
0 & \mbox{if}  \ i \neq d,\\
\Lambda_h & \mbox{if} \ i = d,
\end{array}
 \right.
$$
where the right action of $U_h$ on
 $\Ext^d_{U_h}(A_h, U_h)$ is a quantisation of the right action of $U$ on $\Ext^d_{U}(A, U)$ given by right multiplication.
 \end{theorem}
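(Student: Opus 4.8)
The plan is to realise $\Ext^\bullet_{U_h}(A_h,U_h)$ as the cohomology of a complex obtained by deforming a free resolution over $U$, and then to pin down that cohomology by a Bockstein-type long exact sequence in which the Noetherian hypothesis supplies exactly the $h$-adic separatedness needed to run Nakayama's lemma. First I would resolve $A$ over $U$ by finitely generated free modules: the left $U$-module $A$ is cyclic, since $u\cdot 1_\ahha=\epsilon(u\bract 1_\uhhu)=\epsilon(u)$ and $\epsilon$ is surjective (as $\epsilon(s^\ell(a))=a$), and $U$ being left Noetherian guarantees that kernels of maps between finitely generated modules are finitely generated, so one builds $\cdots\to P_1\to P_0\to A\to 0$ with each $P_i=U^{n_i}$ finite free. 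Next I would deform this to a resolution of $A_h$ over $U_h$: setting $P_i^h:=U_h^{n_i}$ and lifting the augmentation and the differentials $U_h$-linearly, the composites vanish only modulo $h$, but since $U_h\cong U[[h]]$ and $A_h\cong A[[h]]$ are $h$-adically complete and topologically free, the standard order-by-order perturbation of the differentials (unobstructed because the reduction mod $h$ is an acyclic complex of frees) converges $h$-adically and yields a genuine finite free resolution $P^h_\bullet\to A_h$ reducing to $P_\bullet\to A$ modulo $h$. I also record that $U_h$ is left Noetherian: with respect to the $h$-adic filtration one has $\operatorname{gr}_h U_h\cong U[h]$, which is left Noetherian by the Hilbert basis theorem, and $U_h$ is $h$-adically complete, whence $U_h$ itself is Noetherian; moreover $hU_h$ lies in its Jacobson radical.

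Applying $\Hom_{U_h}(-,U_h)$ to $P^h_\bullet$ computes $\Ext^\bullet_{U_h}(A_h,U_h)$. Since $\Hom_{U_h}(U_h^{n_i},U_h)\cong U_h^{n_i}\cong\Hom_U(U^{n_i},U)[[h]]$, the resulting cochain complex $C^\bullet_h$ is termwise $C^\bullet[[h]]$, where $C^\bullet=\Hom_U(P_\bullet,U)$ computes $\Ext^\bullet_U(A,U)$, and its differentials reduce modulo $h$ to those of $C^\bullet$. As $C^\bullet_h$ is $h$-torsion-free, multiplication by $h$ yields a short exact sequence of complexes of right $U_h$-modules $0\to C^\bullet_h\to C^\bullet_h\to C^\bullet\to 0$, and I would exploit the associated long exact sequence
\[ \cdots\to H^{i}(C^\bullet_h)\xrightarrow{\,\cdot h\,}H^{i}(C^\bullet_h)\xrightarrow{\,\pi\,}H^{i}(C^\bullet)\to H^{i+1}(C^\bullet_h)\to\cdots. \]
Each $H^i(C^\bullet_h)$ is finitely generated over the Noetherian ring $U_h$, hence, by Krull's intersection theorem together with $hU_h\subseteq\operatorname{Jac}(U_h)$, it is $h$-adically complete and separated. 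For $i\neq d$ the hypothesis $H^i(C^\bullet)=\Ext^i_U(A,U)=0$ forces $\cdot h$ to be surjective on $H^i(C^\bullet_h)$, so $H^i(C^\bullet_h)=hH^i(C^\bullet_h)$ and separatedness gives $H^i(C^\bullet_h)=0$. In degree $d$ the vanishing in degrees $d-1$ and $d+1$ shows that $\cdot h$ is injective on $\Lambda_h:=H^d(C^\bullet_h)$, so $\Lambda_h$ is $h$-torsion-free, and that $\pi$ induces an isomorphism $\Lambda_h/h\Lambda_h\cong H^d(C^\bullet)=\Lambda$.

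It then remains to see that $\Lambda_h$ is genuinely a quantisation of $\Lambda$, that is, topologically free with the prescribed reduction. Here $h$-torsion-freeness gives $\operatorname{gr}_h\Lambda_h\cong\Lambda[h]$, and, $\Lambda_h$ being $h$-adically complete and separated, the filtered-module machinery recalled at the start of this section (strictness of the induced filtered differentials) identifies $\Lambda_h\cong\Lambda[[h]]$ as topological $k[[h]]$-modules. Finally, the right $U_h$-action on $\Ext^d_{U_h}(A_h,U_h)$ coming from right multiplication on $U_h$ reduces modulo $h$, along the sequence above, to the right $U$-action on $\Lambda$ given by right multiplication; restricting this action along $A_h\to U_h$ equips $\Lambda_h$ with the desired $A_h$-module structure quantising that of $\Lambda$.

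I expect the main obstacle to lie in the third paragraph's passage from $h$-divisibility to genuine vanishing and topological freeness. This is precisely where left Noetherianity of $U$, and hence of $U_h$, is indispensable: without finite generation one cannot invoke Krull's intersection theorem to guarantee the $h$-adic separatedness of the cohomology modules, and the long exact sequence alone would control them only up to $\bigcap_n h^nH^i(C^\bullet_h)$, leaving both the vanishing for $i\neq d$ and the flatness of $\Lambda_h$ unresolved. A secondary, more routine point to state carefully is the unobstructed deformation of the free resolution over the complete base $k[[h]]$.
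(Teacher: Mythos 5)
Your proposal is correct and reaches the theorem by a genuinely different route in its main cohomological step. Both arguments rest on a lifted finite free resolution of $A_h$ over $U_h$: the paper's Lemma \ref{resolution} constructs it by induction, lifting a finite generating system of each kernel $\Ker \partial_p$ (which is closed, complete, hence topologically free), using that $GU_h \cong U[h]$ is Noetherian so that $U_h$ is (filtered) Noetherian. Your alternative --- resolve $A$ over $U$ first and then deform --- amounts to the same thing, but your justification by ``order-by-order perturbation of the differentials, unobstructed because the reduction mod $h$ is acyclic'' is the one loose point: deforming the differentials with the terms held fixed meets obstructions in $\Ext^2_U(A,A)$, which need not vanish; the correct (and standard) argument is precisely the step-by-step kernel-lifting of the paper's lemma, and that is exactly where Noetherianity, completeness and Artin--Rees enter (you flagged this yourself as the point to state carefully). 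After the resolution is in place the two proofs diverge: the paper endows $\Hom_{U_h}(L^\bullet,U_h)$ with its natural filtration, quotes the strictness results of \cite{Che:DPFQG} for the vanishing in degrees $i\neq d$ and for the reduction modulo $h$, and establishes $h$-torsion-freeness of $\Ext^d_{U_h}(A_h,U_h)$ by an explicit cocycle-level chase; you instead run the Bockstein long exact sequence of $0\to C_h^\bullet \xrightarrow{h} C_h^\bullet \to C^\bullet \to 0$ and close it with Nakayama (finite generation of the cohomology over the Noetherian ring $U_h$, $h\in\mathrm{Jac}(U_h)$ by completeness, Krull intersection for separatedness --- legitimate here because $h$ is central, so the noncommutative Artin--Rees property is available). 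Your version is more self-contained and efficient: a single long exact sequence simultaneously yields the vanishing for $i\neq d$, the $h$-torsion-freeness of $\Lambda_h$, and the isomorphism $\Lambda_h/h\Lambda_h\cong\Lambda$; indeed the paper's torsion-freeness chase (choosing $\sigma_{d-1}$, reducing modulo $h$, lifting, and contradicting minimality of $n$) is a hand-made instance of exactly this Bockstein argument. What the paper's filtered formalism buys in exchange is the strictness of the transposed differentials and the identification of the induced filtration on $\Ext^d$ with the $h$-adic one, which it then uses --- just as you do, via finite generation over the complete Noetherian ring $U_h$ --- to obtain completeness, whence topological freeness follows in both treatments from completeness plus torsion-freeness.
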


We remind the reader here that $\Lambda_h$ is $\Lambda[[h]]$ as a $k[[h]]$-module.
This theorem is proven in \cite{Che:DPFQG} in the case where $A_h=k[[h]]$.
For the proof of the general case, we will need the following auxiliary statement:

\begin{lemma}
\label{resolution}
There exists  a resolution of the $U_{h}$-module $A_h$ by
 finite rank free (filtered) $FU_{h}$-modules
$$
\ldots \buildrel {\partial_{i+1}} \over \longrightarrow FL^{i}
\buildrel{\partial_{i}}\over \longrightarrow  \ldots \buildrel{\partial_{2}} \over
\longrightarrow   FL^{1} \buildrel{\partial_{1}} \over
\longrightarrow FL^{0} \longrightarrow A_h \longrightarrow \{0\},
$$
where $FL^{i}$ is $(U_h)^{d_i}$ endowed with the $h$-adic filtration
such that the associated graded complex
$$
\ldots GL^{i}\buildrel {G\partial_{i}} \over \longrightarrow
 \ldots \to GL^{1}\buildrel {G\partial_{1}} \over \longrightarrow
  GL^{0}\longrightarrow A[h] \longrightarrow \{0\}
$$
is a resolution of the $U[h]$-module $A[h]$.
\end{lemma}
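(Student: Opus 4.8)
The plan is to lift a finite free resolution of $A$ over $U$ to an $h$-adically filtered free resolution of $A_h$ over $U_h$, using $h$-adic completeness to carry out the lifting and the Noetherian hypothesis to keep all ranks finite. First I would record the two structural facts on which everything rests. Since $U_h \cong U[[h]]$ and $A_h \cong A[[h]]$ as topological $k[[h]]$-modules with $h$-adic filtrations $F_n = h^n(-)$ (Definition \ref{def-q-bialgd}), taking associated gradeds gives $\mathrm{gr}(U_h) \cong U \otimes_k k[h] =: U[h]$ and $\mathrm{gr}(A_h) \cong A \otimes_k k[h] =: A[h]$; moreover $A_h$ is a cyclic $U_h$-module, generated by $1$ (as $s^\ell_h(a)\cdot 1 = a$), hence finitely generated, with $\mathrm{gr}(A_h)=A[h]$ cyclic over $U[h]$. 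Because $U$ is left $k$-Noetherian and $A$ is a cyclic, hence finitely generated, $U$-module, one may choose a resolution
$$
\cdots \longrightarrow U^{d_1} \buildrel{\bar\partial_1}\over\longrightarrow U^{d_0}\buildrel{\bar\varepsilon}\over\longrightarrow A \longrightarrow \{0\}
$$
by finite rank free $U$-modules, the successive kernels being finitely generated at each stage by Noetherianity.

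Next I would build $FL^\bullet$ by induction, setting $FL^i := (U_h)^{d_i}$ with the pure $h$-adic filtration and using at every step the filtered Nakayama lemma for complete filtered modules: generators of the associated graded lift to (topological) generators, and the resulting $U_h$-linear surjection is strict. Starting from a lift of the generator of $A_h$ one obtains a strict surjection $\varepsilon : FL^0 \to A_h$ with $\mathrm{gr}(\varepsilon)=\bar\varepsilon\otimes_k k[h]$. Assuming $\partial_i$ (with $\partial_0:=\varepsilon$) constructed and strict, put $K_i := \Ker(\partial_i)$, a closed, hence complete and separated, submodule of $FL^i$. Strictness forces $\mathrm{gr}(K_i) = \Ker(\mathrm{gr}\,\partial_i) = \Ker(\bar\partial_i \otimes_k k[h])$, which by exactness of the classical resolution and flatness (indeed freeness) of $k[h]$ over $k$ equals $\mathrm{im}(\bar\partial_{i+1}\otimes_k k[h])$ — a submodule of the Noetherian module $U[h]^{d_i}$ generated in degree $0$ by the finitely many columns of $\bar\partial_{i+1}$. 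Lifting these degree‑$0$ generators to elements of $K_i$ and sending the standard basis of $(U_h)^{d_{i+1}}$ onto them yields a strict surjection $FL^{i+1}\to K_i$; its composite with the strict inclusion $K_i \hookrightarrow FL^i$ is the differential $\partial_{i+1}$, which is again strict and satisfies $\mathrm{gr}(\partial_{i+1}) = \bar\partial_{i+1}\otimes_k k[h]$ and $\mathrm{im}(\partial_{i+1}) = K_i = \Ker(\partial_i)$. The resulting complex $FL^\bullet\to A_h$ is therefore exact, with finite rank free terms carrying the pure $h$-adic filtration.

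Finally, since $\mathrm{gr}(FL^i) = U[h]^{d_i}$ and $\mathrm{gr}(\partial_i) = \bar\partial_i\otimes_k k[h]$ for all $i$, the associated graded complex $GL^\bullet$ is precisely the $h$-extension $U^{d_\bullet}\otimes_k k[h]$ of the classical resolution; as $-\otimes_k k[h]$ is exact, $GL^\bullet$ is a finite rank free resolution of $A[h]$ over $U[h]$, which is exactly the asserted conclusion. I expect the main obstacle to be the bookkeeping that keeps every differential strict along the induction: strictness is what makes $\mathrm{gr}$ commute with the formation of kernels and images (so that $\mathrm{gr}(K_i)=\Ker(\mathrm{gr}\,\partial_i)$), and it is what allows the degree‑$0$ generators of $\mathrm{gr}(K_i)$ to be lifted into $K_i$ itself rather than merely into $FL^i$; completeness and separatedness of $U_h$, of $A_h$, and of the successive kernels is what guarantees that the lifted generators genuinely generate, so that no order‑by‑order correction of the differentials (as in a naive obstruction argument) is ever needed.
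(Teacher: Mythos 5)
Your proof is correct, and it reaches the lemma by a genuinely different organization than the paper's, even though both arguments share the same inductive skeleton (take the kernel of the last differential, lift generators, and keep every map strict so that $\mathrm{gr}$ commutes with kernels and images). The paper constructs the resolution of $A_h$ over $U_h$ directly, with no classical resolution fixed in advance: at each step it shows that $\Ker \partial_p$ is closed, complete and $h$-torsion free, hence topologically free, writes $\Ker \partial_p = V_p[[h]]$, and then invokes the \emph{filtered Noetherianity of $U_h$} (deduced from $GU_h = U[h]$ being Noetherian, citing \cite[Prop.~3.0.7]{Che:DPFQG}) to know that $\Ker \partial_p$, hence $V_p$, is finitely generated; a generating system of $V_p$ is lifted, and exactness of the graded complex is obtained at the end from strict exactness of the filtered one (\cite[Prop.~3.0.2]{Che:DPFQG}). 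In the paper's proof the classical resolution of $A$ over $U$ only appears a posteriori, as the degree-zero part of $GL^\bullet$. You proceed in the opposite direction: you first fix a finite free resolution $C_\bullet \to A$ over $U$ (using only the Noetherianity of $U$) and lift it; strictness then identifies $\mathrm{gr}(\Ker \partial_i)$ with $\Ker(\bar\partial_i)\otimes_k k[h] = \mathrm{im}\,(\bar\partial_{i+1})\otimes_k k[h]$, so finite generation of each kernel comes for free from $C_\bullet$, and the appeal to filtered Noetherianity of $U_h$ is avoided entirely. What your route buys: the ranks $d_i$ are prescribed in advance and the graded complex is explicitly the base change $C_\bullet \otimes_k k[h]$, with the only external input being the standard completeness lemma (source complete, target separated, $\mathrm{gr}$-surjective implies surjective and strict). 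What the paper's route buys: it needs no compatibility bookkeeping between a chosen classical resolution and the quantum differentials, at the price of the extra Noetherian result for $U_h$. Since both rest on the same two filtered-module facts, neither is more elementary in substance, but your identification of $GL^\bullet$ as a literal base change is sharper than what the lemma asserts.
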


\begin{proof}
We will construct the $p$-th module $FL^p$ by induction on $p$: for $p=0$, one may take $FL^0:=U_h$ and $\partial _0:=\epsilon$, endowed with the $h$-adic topology. Assume then that $FL^0, FL^1, \ldots, FL^p$ are already constructed along with $\partial_0, \partial_1, \ldots,  \partial_p$. As $FL^p$ is topologically free, the induced filtration and the  $h$-adic filtration coincide on
$\Ker \partial_p$. As $\Ker \partial_p$ is closed in  $ FL^p $, it is also complete.
This $k[[h]]$-module is topologically free as it is complete for the $h$-adic topology and also torsion free; set
$\Ker \partial_p :=V_p[[h]]$.
Since $GU_h=U[h]$ is Noetherian, the (filtered) algebra $U_h$ is (filtered) Noetherian \cite[Prop.~3.0.7]{Che:DPFQG} and the
$U_h$-module $\Ker \partial_p$ is finitely generated so that the $U$-module $V_p$ is finitely generated as well. Let
$(\overline{v_1}, \ldots , \overline{v_{d_{p+1}}})$ be a generating system of the $U$-module $V_p$ and let
$(v_1, \ldots , v_{d_{p+1}})\in (\Ker \partial_p)^{d_{p+1}}$
be a lift of $(\overline{v_1}, \dots , \overline{v_{d_{p+1}}})$.  Moreover, introduce the $U_h$-module morphism
$$
\partial_{p+1}: (U_h)^{d_{p+1}} \to  \Ker \partial_p, \quad (u_1,\ldots ,u_{p+1}) \mapsto  \sum u_i v_i,
$$
which is a strict
morphism of filtered modules. The filtered exact sequence
$$
(U_h)^{p+1}\buildrel {\partial_{p+1}} \over \longrightarrow (U_h)^p
\buildrel{\partial_{p}}\over \longrightarrow
(U_h)^{p-1}
$$
is strict exact so that the sequence
$$
(GU_h)^{p+1}\buildrel {G\partial_{p+1}} \over \longrightarrow (GU_h)^p
\buildrel{G\partial_{p}} \over
\longrightarrow   (GU_h)^{p-1}
$$
is exact ({\em cf.}~\cite[Prop.~3.0.2]{Che:DPFQG}).
\end{proof}

\begin{proof}[Proof of Theorem \ref{maximal exterior power}]
The $\Ext ^\bull_{U_h}(A_h, U_h)$-groups
can be computed via
the complex $M^\bull := \big( \Hom_{U_{h}}(L^{\bullet}, U_h), \partial_{\bullet} \big)$.
Its components are endowed with the natural filtration
$$
F_s\Hom_{U_{h}}(L^{i}, U_{h}) :=
\{\lambda \in \Hom_{U_{h}}(L^{i}, U_{h})
\mid \lambda ( F_{p}L^{i} )\subset
F_{s+p}U_{h}\},
$$
and the  right $FA$-modules
$F\Hom_{U_{h}}(L^{i}, U_{h})$ are isomorphic to
$(U_h)^{d_i} $ endowed with the $h$-adic  filtration.
On the other hand,
the filtration of the $M^i:= \Hom_{U_{h}}(L^{i}, U_{h})$ induces a filtration on
$\Ext^i_{U_h}(A_h, U_h)$ as follows:
$$
F_s\Ext^i_{U_h}(A_h, U_h) :=
\frac{\Ker  \qttr \partial t {}{} i \cap F_s M^{i} + {\rm Im}  \qttr \partial t {}{} {i-1}}{{\rm Im}  \qttr \partial t {}{} {i-1}}
\simeq
\frac{\Ker \qttr \partial t {}{} i\cap F_s M^{i}}{{\rm Im}  \qttr \partial t {}{} {i-1}\cap
    F_s M^{i-1}}.
$$
The filtration on the
$\Ext^i_{U_h}(A_h, U_h)$-groups is nothing but the $h$-adic filtration.
Reproducing the proof of \cite{Che:DPFQG}, one can see that:
\begin{itemize}
\item
if $i \neq d$, then
$\Ext^{i}_{U_{h}} (A_h, U_{h})
=\{0\}$;
\item the maps $\qttr \partial t {}{} i$ are strict filtered morphisms;
\item
$\Ext^{d}_{U_{h}} (A_h, U_h )$ is complete for the $h$-adic filtration (as it is a finitely generated $U_h^\op$-module, see \cite{Che:DPFQG}).  Moreover,
$\Ext^{d}_{U_{h}}(A_h, U_{h})/h\Ext^{d}_{U_{h}}(A_h,U_{h})  \simeq \Ext^{d}_{U}(A,U)$ as $U^\op$-modules.
\end{itemize}

Let us show that $\Ext^{d}_{U_{h}} (A_h, U_{h} )$ is $h$-torsion free.
Let $[\sigma_d] \in \Ext^{d}_{U_{h}} (A_h, U_{h} )$, where $\sigma_d \in \Ker \qttr \partial t {}{} d$, be an $h$-torsion element in $\Ext^{d}_{U_{h}} (A_h, U_{h})$. There exists a minimal $n \in \mathbb{N}^{\scalast}$ such that
$h^n[\sigma_d]=0$.
Let $\sigma_{d-1} \in \Hom _{U_h}(L^{d-1}, U_h)$ be such that
$h^n \sigma_d = \qttr \partial t {}{} {d-1}(\sigma_{d-1})$. Then, by reduction modulo $h$, one obtains
$\overline{ \qttr \partial t {}{} {d-1}}(\overline{\sigma_{d-1}})=0$ and there exists $\overline{\sigma_{d-2}}$ such that
$\overline{\sigma_{d-1}}=\overline{\partial_{d-2}}\left (\overline{\sigma_{d-2}} \right )$. Let $\sigma_{d-2}$ be a lift of $\overline{\sigma_{d-2}}$. Then there exists $\tau_{d-1}$ such that
$$
\sigma_{d-1}= \qttr \partial t {}{} {d-2} (\sigma_{d-2}) + h \tau_{d-1}.
$$
Hence $h^n\sigma_{d}=h \qttr \partial t {}{} {d-1} (\tau_{d-1})$, which gives
(using the fact that $\Hom_{U_{h}}(L^{d}, U_{h}) $ is topologically free)
$h^{n-1}\sigma_{d}= \qttr  \partial t {}{} {d-1} (\tau_{d-1})$.
This contradicts the minimality of $n$ so that
$\Ext^{d}_{U_{h}} (A_h, U_{h})$ is $h$-torsion free.
As $\Ext^{d}_{U_{h}}(A_h, U_{h})$ is complete for the $h$-adic topology and $h$-torsion free,  it is topologically free.
\end{proof}

Combining this result with the more general structure theory as in Proposition \ref{left and right modules} resp.\ Proposition \ref{left and right modules h}, one obtains:

 \begin{proposition}
\label{tavernadeiquaranta}
 Let $U$ satisfy the conditions of Theorem \ref{maximal exterior power}. Assume moreover that
 \begin{enumerate}
 \item
 $A$ is noetherian;
 \item
  $\Ext_U(A,U)$ is a dualising module for $(U,A)$, {\em i.e.}, satisfies the hypothesis of Proposition
 \ref{left and right modules};
 \item
 $_\blact \Ext_U(A,U)$ is a  finitely generated projective  $A$-module.
 \end{enumerate}
 Then
 $\cP_h= \Ext_{U_h}^d ( A_h, U_h)$ is a dualising module for $(U_h,A_h)$ and produces
an equivalence between the categories of left resp.\ right complete $U_h$-modules.
 \end{proposition}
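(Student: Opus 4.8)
The plan is to verify that $\cP_h = \Ext^d_{U_h}(A_h,U_h)$ satisfies every hypothesis of Proposition \ref{left and right modules h} and then simply invoke that result. First I would record the standing structural facts. The dualising module hypothesis (ii) presupposes (through Proposition \ref{left and right modules}) that $(U,A)$ is simultaneously a left and a right Hopf algebroid; hence, by Remark \ref{rem.s_quant-def.s}\,{\it (c)} and its right-handed analogue, the quantisation $(U_h,A_h)$ is itself simultaneously a quantum left and right Hopf algebroid, which is exactly what Proposition \ref{left and right modules h} requires.

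Next I would identify $\cP_h$ and establish its finiteness. By Theorem \ref{maximal exterior power}, $\Ext^d_{U_h}(A_h,U_h)$ exists, is concentrated in cohomological degree $d$, and is a quantisation $\Lambda_h$ of $\Lambda = \Ext^d_U(A,U)$, with $\cP_h \cong \Lambda[[h]]$ and $A_h/hA_h \cong A$, the right $U_h$-action reducing modulo $h$ to the given right $U$-action on $\Lambda$. Since $\Lambda$ is a dualising module, $\Lambda_\bract$ is finitely generated projective over $A^{\rm op}$, and by hypothesis (iii) ${}_\blact\Lambda$ is finitely generated projective over $A$. I would promote both properties to $A_h$ by lifting idempotents along the $h$-adically complete surjection $A_h^{\rm op} \to A^{\rm op}$: choosing an idempotent $\bar e \in M_n(A^{\rm op})$ with $\bar e (A^{\rm op})^n \cong \Lambda_\bract$, one lifts it to an idempotent $e \in M_n(A_h^{\rm op})$, and a lift of the corresponding projective presentation realises $\cP_{h\bract}$ as a summand of a free $A_h^{\rm op}$-module (surjectivity and splitting follow from completeness). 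The same argument over $A_h$ gives that ${}_\blact\cP_h$ is finitely generated projective, so the finiteness requirements of Proposition \ref{left and right modules h} hold.

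The heart of the argument is then to check conditions (i) and (ii) of Proposition \ref{left and right modules h} by reduction modulo $h$. Because $\cP_h$ is finitely generated projective over $A_h^{\rm op}$, the functors $\Hom_{\Aopp_h}(\cP_h,-)$ and ${}_\blact\cP_h \otimes_{\Aopp_h}(-)$ are exact and compatible with base change along $A_h \to A_h/h^nA_h$; in particular $\Hom_{\Aopp_h}(\cP_h,\cP_h)/h \cong \Hom_{\Aopp}(\Lambda,\Lambda)$, and for a complete $N_h$ with reduction $N := N_h/hN_h$ the evaluation map of Proposition \ref{left and right modules h} reduces modulo $h$ to the classical evaluation ${}_\blact\Lambda \otimes_{\Aopp} \Hom_{\Aopp}(\Lambda,N)_\ract \to N$. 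Both reduced maps are isomorphisms precisely because $\Lambda$ is a dualising module in the sense of Proposition \ref{left and right modules}. Using the Lemma preceding Proposition \ref{left and right modules h} (completeness of the relevant $\Hom$-space and tensor product), source and target are $h$-adically complete, so for condition (i), where both sides are topologically free, the isomorphism lifts at once; for condition (ii), where $N_h$ may carry $h$-torsion, I would argue by induction on $n$ that the evaluation is an isomorphism modulo $h^n$, using the short exact sequences $0 \to h^nN_h/h^{n+1}N_h \to N_h/h^{n+1}N_h \to N_h/h^nN_h \to 0$, the exactness of the two functors, and the five lemma, and then pass to the inverse limit by completeness.

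With (i) and (ii) in hand and $\cP_h$ finitely generated projective on both sides, Proposition \ref{left and right modules h} applies verbatim and yields the asserted equivalence $\ucmod \simeq \cmodu$ with quasi-inverse functors $M_h \mapsto {}_\blact\cP_h \otimes_{\Aopp_h} {M_h}_\ract$ and $N'_h \mapsto \Hom_{\Aopp_h}(\cP_h,N'_h)$. I expect the induction of the previous paragraph — lifting the evaluation isomorphism from the reduction modulo $h$ to \emph{arbitrary} complete, possibly $h$-torsion, modules $N_h$ — to be the main obstacle; the finite projectivity of $\cP_h$, which guarantees exactness and base-change compatibility of both functors, together with the completeness of all modules involved, is exactly what makes this step go through.
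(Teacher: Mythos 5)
Your proposal is correct, and it reaches the conclusion by a genuinely different route in the two technical steps; the paper's own proof also just verifies the hypotheses of Proposition \ref{left and right modules h}, but with different tools. For finite generation and projectivity of $\cP_{h\bract}$ over $A_h^{\rm op}$ (and of ${}_\blact\cP_h$ over $A_h$), the paper does not lift idempotents: it quotes finite generation from the filtered-module results of \cite{Che:DPFQG} and then proves projectivity by showing $\Ext^{j}_{A_h^{\rm op}}(\cP_h,N_h)=0$ for $j>0$, first for finitely generated $N_h$ (again via the filtered machinery of \emph{op.~cit.}) and then for arbitrary $N_h$ by a direct-limit argument; your idempotent-lifting argument (lift $\bar e$ along the $h$-adically complete surjection $M_n(A_h^{\rm op})\to M_n(A^{\rm op})$, lift the comparison map by projectivity, conclude by ``isomorphism mod $h$ plus topological freeness implies isomorphism'') is a more self-contained alternative that avoids those external citations. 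For the evaluation condition, the paper argues in three stages: topologically free $N_h$ (reduction mod $h$, via Remark \ref{piovera?}), then finitely generated $N_h$ (finite free resolutions over the filtered-Noetherian ring $U_h$, exactness of $N_h\mapsto {}_\blact\cP_h\otimes_{A_h^{\rm op}}\Hom_{A_h^{\rm op}}(\cP_h,N_h)$, and a diagram chase), then arbitrary $N_h$ by direct limits --- so it obtains the isomorphism for \emph{all} right $U_h$-modules, not only complete ones. Your d\'evissage (five lemma applied to $0\to h^nN_h/h^{n+1}N_h\to N_h/h^{n+1}N_h\to N_h/h^nN_h\to 0$, using the dualising property of $\Lambda$ on the $h$-torsion subquotients, then inverse limits via the completeness Lemma) covers exactly the complete modules that the statement of Proposition \ref{left and right modules h} requires, and it does not need Noetherianity of $U_h$ at this point. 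Condition \textit{(i)} is treated identically in both proofs. The one detail you should spell out is the base-change isomorphism $\Hom_{A_h^{\rm op}}(\cP_h,N_h)/h^n\Hom_{A_h^{\rm op}}(\cP_h,N_h)\cong \Hom_{A_h^{\rm op}}(\cP_h,N_h/h^nN_h)$ for possibly $h$-torsion $N_h$: it holds because projectivity of $\cP_h$ lets every map $\cP_h\to h^nN_h$ be written as $h^n g$ for some $g\colon \cP_h\to N_h$, which is the precise content of your phrase ``base-change compatibility''.
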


\begin{remark}
\label{piovera?}
Let  $M_h :=M[[h]]$ and $N_h := N[[h]]$ be two   $A_h^{\op}$-modules which are  topologically free with respect to the $h$-adic topology.  Assume moreover that $M_h$ is finitely generated projective over $A^\op_h$;
then $\Hom_{\scriptscriptstyle{A_h^{\op}}}(M_h, N_h)$ is topologically free and, as said before,  is isomorphic to
 $\Hom_{\Aopp}(M,N)[[h]]$ as a $k[[h]]$-module:
 observe that $\Hom_{\scriptscriptstyle{A_h^{\op}}}(M_h, N_h)$ is complete for the induced  topology  as it is a closed subset of the topologically free $k[[h]]$-module
 $\Hom_{k[[h]]}(M_h, N_h)$. On the other hand, on  $\Hom_{\scriptscriptstyle{A_h^{\op}}}(M_h, N_h)$, the induced topology coincides with the $h$-adic topology. Hence $\Hom_{\scriptscriptstyle{A_h^{\op}}}(M_h, N_h)$ is complete
 for the $h$-adic topology and
 since it is also torsion free, it is topologically free. Let us now show that
 $\Hom_{\scriptscriptstyle{A_h^{\op}}}(M_h, N_h)/h \Hom_{\scriptscriptstyle{A_h^{\op}}}(M_h, N_h)$ is isomorphic to
$\Hom_{A^{\op}}(M, N)$: in fact, there exists an $A_h^{\op}$-module $M_h^\prime$ and a finitely generated free  $A_h^{\op}$-module $F_h$
such that
 $M_h \oplus M_h^\prime =F_h$. Any element $\phi$ of
 $\Hom_\Aopp(M,N)$ can be extended to an element of
$\Hom_\Aopp({F_h/hF_h},N)$, which, in turn, can be lifted to an element of
$\Hom_{\scriptscriptstyle{A_h^{\op}}}(F_h, N_h)$ and produces (by restriction) a lift of $\phi$.
\end{remark}

\begin{proof}[Proof of Proposition \ref{tavernadeiquaranta}]
The module ${\cP_h}_\bract$ is
a finitely generated $A_h^\op$-module as $\cP_\bract := \Ext_U(A,U)_\bract$ is a finitely generated $\Aop$-module
(see Proposition 3.0.5 of the preprint version of \cite{Che:DPFQG}).

   Let $N_h$ be a finitely generated $A_h^\op$-module.  It can be considered as a filtered $FA_h^\op$-module as follows: one has an epimorphism
$  {\big( A_h^{\op} \big)}^n \, {\buildrel {p} \over \longrightarrow} \, N_h \longrightarrow 0  $,
and we endow $N_h$ with the filtration $p\big( F{\big( A_h^{\op} \big)}^n \big)$.
 As ${\cP}_\bract$ is a projective $A^\op$-module, ${\cP}[h]_\bract$ is a projective $A[h]^\op$-module, and
Proposition 3.0.11 of the preprint version of \cite{Che:DPFQG}  shows that
$\Ext^i_{A_h^\op}(\cP_h,N_h)=\{0\}$ if $i>0$.

   Let now $N_h$ be any $A_h^\op$-module. We have
$N_h =\lim\limits_{\rightarrow} N'_h$, where $N'_h$ runs over all finitely
generated  $ A_h^\op $-submodules  of  $ N_h  $. Let $F^{\bullet}$ be a resolution of $\cP$ by
finitely generated free $A_h^{\op}$-modules. We have
\begin{equation*}
\begin{split}
\Ext^{j}_{A_h^\op}(\cP_h,N_h) &= \Ext^{j}_{A_h^\op}(\cP_h,\lim\limits_{\rightarrow}N'_h)
= H^{j} \big( \Hom_{A_h^\op}(F^{\bullet},\lim\limits_{\rightarrow}N'_h)\big)\\
&= H^{j}\big( \lim\limits_{\rightarrow}\Hom_{A_h^\op}(F^{\bullet}, N'_h)\big)
=  \lim\limits_{\rightarrow} H^{j} \big( \Hom_{A_h^\op}(F^{\bullet}, N'_h)\big)\\
&= \lim\limits_{\rightarrow} \Ext^{j}_{A_h^\op}(\cP_h, N'_h)
= \{0\},
\end{split}
\end{equation*}
where we used the fact that
the functor $\lim\limits_{\rightarrow}$ is exact because the set of
finitely generated submodules of $M$ is a directed set, {\em cf.}~\cite[Prop.~5.33]{Rot:AITHA}.
Thus we have proven that if $N_h$ is any $A_h^\op$-module, then
$$
\Ext^{j}_{A_h^\op} (\cP_h,N_h)=\{0\} \quad {\rm if } \  j>0.
$$
Consequently, ${\cP_{h}}_\bract$ is a projective $A^\op_h$-module; similarly, ${}_\blact \Ext_{U_h}(A_h,U_h)$ is a projective $A_h^\op$-module.

   The assertion with respect to the evaluation map yet is true if $N_h$ is a topologically free $U_h$-module as it is true  modulo $h$, see Remark \ref{piovera?}.
Furthermore, the functor  $  N_h \mapsto \cP_h \otimes_{\scriptscriptstyle{A_h}}\Hom_{\scriptscriptstyle{A^\op_h}}(\cP_h, N_h)  $  is exact as
$\cP_{h \bract}  $  resp.\  $_\blact \cP_h$  is a projective $A_h^\op$-module resp.\ $A_h$-module.

Let now $N_h$ be a finitely generated $U_h$-module. Using a finite free resolution of  $ N_h  $,  one can show (by a diagram chase argument) that the evaluation map is an isomorphism (as it is an isomorphism for any component of the resolution).
If $N_h$ is any $U_h$-module instead, one can write  $  N_h = \lim\limits_{\rightarrow} N'_h  $, where $N'_h$ runs over all finitely
generated submodules of  $ N_h  $.  Since ${\cP}_h$ is a finitely generated $A_h^{\op}$-module, any element
$\phi \in \Hom_{\scriptscriptstyle{A^\op_h}}(\cP_h, N_h)$ can be considered
as an element of $\Hom_{\scriptscriptstyle{A^\op_h}}(\cP_h, N'_h)$ for a well-chosen finitely generated $A_h^{\op}$-module  $ N'_h  $. Using the finitely generated case, one can see that
 the  evaluation map is an isomorphism for any $U_h$-module  $ N_h  $.

As $\cP_h$ is a finitely generated projective $A_h^{\op}$-module,
the natural left $U_h$-module map
$$
A_h \to \Hom_{\scriptscriptstyle{A^\op_h}}(\cP_h, \cP_h),
\quad a \mapsto (p \mapsto a \blact p)
$$
of Proposition \ref{left and right modules h}
 is an isomorphism as it is an isomorphism modulo $h$. This concludes the proof.
\end{proof}

\begin{example}
For example, if $A$ is the algebra of regular functions on a smooth affine variety $X$ and $L$ is the Lie-Rinehart algebra of vector fields over  $ X  $,  then  $  U = V^\ell(L)  $  satisfies the conditions of Theorem \ref{maximal exterior power}.  More generally, for any Lie-Rinehart algebra $(A,L)$, where $L$ is finitely generated projective of constant rank $d$ over a Noetherian algebra $A$, the pair $\big(A, V^\ell(L)\big)$ fulfils the conditions of  Theorem \ref{maximal exterior power} and one obtains
 $\Ext^d_{V^\ell(L)}(A,V^\ell(L)) = \bigwedge_\ahha^d \Hom_\ahha(L,A)$ for the dualising module (see \cite{Che:PDFKALS, Hue:DFLRAATMC} for more details in this direction).
%
%
 Then, for any quantisation  $ V^\ell(L)_h $  of  $ V^\ell(L)  $,
%
%
 Proposition \ref{tavernadeiquaranta} leads to an equivalence of categories between left and right
complete $V^\ell(L)_h$-modules.
%
%
 Examples of quantisations of  $ V^\ell(L) $  are given in \cite{CheGav:DFFQG}.
\end{example}


\begin{thebibliography}{foo}

 \bibitem[AnFu]{AndFul:RACOM}
 F.~Anderson and K.~Fuller, \emph{Rings and categories of modules},
   second ed., Graduate Texts in Mathematics, vol.~13, Springer-Verlag, New
   York, 1992.

\bibitem[Be]{Bec:DL}
J.~Beck, \emph{Distributive laws}, Sem.\ on {T}riples and {C}ategorical
  {H}omology {T}heory ({ETH}, {Z}\"urich, 1966/67), Springer, Berlin, 1969,
  pp.~119--140.

\bibitem[B]{Boe:HA}
G. B{\"o}hm, \emph{{H}opf algebroids}, Handbook of algebra, {V}ol. 6, North-Holland, Amsterdam,
  2009, pp.~173--236.

\bibitem[BSz]{BoeSzl:HAWBAAIAD}
G. B{\"o}hm and K. Szlach{\'a}nyi,
\emph{Hopf algebroids with
  bijective antipodes: axioms, integrals, and duals}, J. Algebra \textbf{274}
  (2004), no.~2, 708--750.

\bibitem[Bo]{Bor:ADM}
A.~Borel, P.-P. Grivel, B.~Kaup, A.~Haefliger, B.~Malgrange, and F.~Ehlers,
  \emph{Algebraic {$D$}-modules}, Perspectives in Mathematics, vol.~2, Academic
  Press, Inc., Boston, MA, 1987.

\bibitem[CaVdB]{CalVdB:HCAC}
 D.~Calaque and M.~Van den Bergh,  \emph{{H}ochschild cohomology and {A}tiyah classes},  Adv.\ Math.\  {\bf 224}  (2010), 1839--1889.

\bibitem[Ca]{Car:CDC}
P. Cartier, \emph{Cohomologie des coalg\`ebres, expos\'es 4, 5},
  S\'eminaire Sophus Lie, tome 2 (1955--1956), Facult\'e des Sciences de Paris,
  1957.

\bibitem[Ch1]{Che:PDFKALS}
S.~Chemla, \emph{Poincar\'e duality for {$k$}-{$A$} {L}ie superalgebras},
  Bull. Soc. Math. France \textbf{122} (1994), no.~3, 371--397.

\bibitem[Ch2]{Che:DPFQG}
\bysame, \emph{Duality properties for quantum groups}, Pacific J. Math.
  \textbf{252} (2011), no.~2, 313--341 (a more detailed version is available at  {\tt arXiv:0911.2860}).

\bibitem[Ch3]{Chemla3}
\bysame, \emph{Rigid dualizing complex for quantum enveloping algebras and algebras of generalized differential operators}, J. Algebra
  \textbf{276} (2004),  80--102.

\bibitem[ChGa]{CheGav:DFFQG}
 S.~Chemla and F.~Gavarini,  {\em Duality functors for quantum groupoids},  J.\ Noncomm.\ Geom.\  {\bf 9}  (2015), no.~2, 287--358.


\bibitem[Hue]{Hue:DFLRAATMC}
J. Huebschmann, \emph{Duality for {L}ie-{R}inehart algebras and the modular class}, J.
  Reine Angew. Math. \textbf{510} (1999), 103--159.

\bibitem[KadSz]{KadSzl:BAODTEAD}
L. Kadison and K. Szlach{\'a}nyi,
\emph{Bialgebroid actions on depth
  two extensions and duality}, Adv. Math. \textbf{179} (2003), no.~1, 75--121.

\bibitem[Ka]{Kas:DMAMC}
M.~Kashiwara, \emph{{$D$}-modules and microlocal calculus}, Translations of
  Mathematical Monographs, vol. 217, American Mathematical Society, Providence,
  RI, 2003.

\bibitem[Ko]{Kow:HAATCT}
N. Kowalzig, \emph{Hopf algebroids and their cyclic theory}, Ph.~D. thesis,
  Universiteit Utrecht and Universiteit van Amsterdam, 2009.

\bibitem[KoKr]{KowKra:DAPIACT}
N.~Kowalzig and U. Kr\"ahmer, \emph{Duality and products in algebraic
  (co)homology theories},  J. Algebra \textbf{323}
  (2010), no.~7, 2063--2081.

\bibitem[KoP]{KowPos:TCTOHA}
N.~Kowalzig and H.~Posthuma, \emph{The cyclic theory of {H}opf
  algebroids}, J. Noncomm. Geom. \textbf{5} (2011), no.~3, 423--476.

\bibitem[Ph\`u]{Phu:TKDFHA}
H.~H.~Ph\`ung, \emph{Tannaka-{K}rein duality for {H}opf algebroids}, Israel J.\
  Math.\ \textbf{167} (2008), 193--225.

\bibitem[Ri]{Rin:DFOGCA}
G. Rinehart, \emph{Differential forms on general commutative algebras},
  Trans. Amer. Math. Soc. \textbf{108} (1963), 195--222.

\bibitem[Ro]{Rot:AITHA}
J.~Rotman, \emph{An introduction to homological algebra}, second ed.,
  Universitext, Springer, New York, 2009.

\bibitem[Sch1]{Schau:DADOQGHA}
P. Schauenburg, \emph{Duals and doubles of quantum groupoids ({$\times\sb
  R$}-{H}opf algebras)}, New trends in Hopf algebra theory (La Falda, 1999),
  Contemp. Math., vol. 267, Amer. Math. Soc., Providence, RI, 2000,
  pp.~273--299.

\bibitem[Sch2]{Schau:TDATDOAHAAHA}
P. Schauenburg, \emph{The dual and the double of a {H}opf algebroid are {H}opf algebroids},  preprint  {\tt arXiv:1504.05057}  (2015).

\bibitem[Schn]{Schn:AITDM}
J.-P.~Schneiders, \emph{An introduction to {$\mathscr{D}$}-modules},
  Bull. Soc. Roy. Sci. Li\`ege \textbf{63} (1994), no.~3-4, 223--295, Algebraic
  Analysis Meeting (Li{\`e}ge, 1993).

\bibitem[Sw]{Swe:HA}
M. Sweedler, \emph{Hopf algebras}, Mathematics Lecture Note Series, W. A.
  Benjamin, Inc., New York, 1969.


 \bibitem[T]{Tak:GOAOAA}
 M. Takeuchi, \emph{Groups of algebras over {$A\otimes \overline A$}}, J.
   Math. Soc. Japan \textbf{29} (1977), no.~3, 459--492.

\bibitem[Xu]{Xu:QG}
P. Xu, \emph{Quantum groupoids}, Comm. Math. Phys. \textbf{216} (2001),
  no.~3, 539--581.

\end{thebibliography}
\end{document}